\let\cal\mathcal
\newtheorem{theorem}{Theorem}
\newtheorem{lemma}[theorem]{Lemma}
\newtheorem{sublemma}[theorem]{Sublemma}
\newtheorem{corollary}[theorem]{Corollary}
\newtheorem{proposition}[theorem]{Proposition}
\theoremstyle{remark}
\newtheorem{remark}[theorem]{Remark}
\theoremstyle{definition}
\newtheorem{definition}[theorem]{Definition}
\newtheorem{problem}[theorem]{Problem}
\numberwithin{equation}{section}
\numberwithin{theorem}{section}
\def\M{\cal{M}}
\def\ch{\raise 0.5ex \hbox{$\chi$}}
\def\T{\tau}
\def\E{\cal{E}}
\let\phi\varphi
\let\epsilon\varepsilon
\def\log{\operatorname{log}}
\renewcommand{\i}{{\rm i}}
\newcommand{\N}{\cal{N}}
\newcommand{\h}{\mathsf{h}}
\newcommand{\Tr}{\mbox{\rm Tr}}
\newcommand{\tr}{\mbox{\rm tr}}
\begin{document}

\title[Burkholder inequalities]{Noncommutative Burkholder/Rosenthal  inequalities associated with convex functions}
\author[N. Randrianantoanina]{Narcisse Randrianantoanina}
\address{Department of Mathematics, Miami University, Oxford,
Ohio 45056, USA}
 \email{randrin@miamioh.edu}

\author[L. Wu]{Lian Wu}
 \address{School of Mathematics and Statistics, Central South University, Changsha 410085, China and  Department of Mathematics, Miami University, Oxford, Ohio 45056, USA}
 
 \date{\today}
 
 \thanks{Wu was partially supported by NSFC(No.11471337) and  the China Scholarship Council}
 \email{wul5@miamioh.edu}
 
\subjclass[2010]{Primary: 46L53, 46L52. Secondary: 47L05, 60G42}
\keywords{Noncommutative Burkholder inequalities, Noncommutative Rosenthal inequalities, Orlicz functions, moment inequalities, interpolations}

\begin{abstract} 
We prove  noncommutative martingale inequalities associated with convex functions. More precisely,  we obtain $\Phi$-moment analogues of the noncommutative Burkholder inequalities and the noncommutative Rosenthal inequalities  for any convex Orlicz function $\Phi$ whose  Matuzewska-Orlicz indices  $p_\Phi$ and $q_\Phi$ are such that  $1<p_\Phi\leq q_\Phi <2$ or $2<p_\Phi \leq q_\Phi<\infty$. These results generalize the noncommutative Burkholder/Rosenthal inequalities due to Junge and Xu.
 \end{abstract}

\maketitle


\section{Introduction}
 The theory of noncommutative martingales has enjoyed  considerable progress in recent years  due to  its interaction with other field of mathematics such as operator spaces and free probability. Many  classical  martingale inequalities have been extended to the noncommutative setting.  We refer to \cite{Junge-Perrin, JX, JX2, PX, Ran15}  and the references therein for more information on noncommutative martingales. This paper deals with  moment inequalities  associated with convex functions for noncommutative martingales.

The study of convex function inequalities for martingales 
was initiated by Burkholder and Gundy in  their seminal  paper \cite{BG}.
The general theme of their work  can be summarized as follows: let  $\mathfrak{M}$
be a family of martingales on a probability space $(\Omega, \Sigma, \mathbb{P})$ 
and $\Phi$ be a nonnegative  and increasing convex function  on $[0,\infty)$. If $U$ and $V$ are operators on $\mathfrak{M}$ with values in the set of nonnegative random variables on $(\Omega, \Sigma, \mathbb{P})$, under what conditions on $\Phi$  and $\mathfrak{M}$ does the inequality $\mathbb{E}\big[ \Phi(Vf)\big] \leq C\mathbb{E}\big[ \Phi(Uf)\big]$ hold for all  martingales $f \in \mathfrak{M}$.
For the  special case where $\Phi(t)=t^p$  for $1\leq p<\infty$, the above question reduces to comparisons of $p$-th moments of  the nonnegative  random variables $Vf$ and $Uf$. For general  convex function $\Phi$, these types of inequalities are  generally referred to as $\Phi$-moment inequalities. Typical examples of such operators $U$ and $V$ are, among others, square functions,  maximal functions, martingale transforms, ect. Subsequently,  many classical  $p$-th moment inequalities for martingales were extended  to convex  function inequalities.  We refer to \cite{Bu1, Burkholder-Davis-Gundy, Garsia2}  for more information on  the development of $\Phi$-moment inequalities  from the classical martingale theory.

 Recently,  several $\Phi$-moment inequalities  have been  extended to the context of noncommutative martingales. This was initiated by Bekjan and Chen in \cite{Bekjan-Chen}. For instance,   $\Phi$-moment versions of  the noncommutative Burkholder-Gundy inequalities from \cite{PX}  were considered in \cite{Bekjan-Chen, Dirksen-Ricard}. Various maximal  type-inequalities for noncommutative martingales initially proved  in \cite{Ju}  for the case of noncommutative $L_p$-spaces are now known to be valid for  a wider class  of convex functions
(\cite{Bekjan-Chen-Ose, Dirksen}). In this paper, we are mainly interested on inequalities involving conditioned square functions of noncommutative martingales.  To better explain  our motivation  and  results,  let us begin by recalling Rosenthal's remarkable inequalities (\cite{Ros}) which state  that if $2\leq p<\infty$ and $(g_n)_{n\geq 1}$ is a sequence of independent mean-zero random variables  in $L_p(\Omega, \Sigma, \mathbb{P})$, then the following holds:
\begin{equation}\label{Rosenthal}
\Big(\mathbb{E}\big| \sum_{n\geq 1} g_n \big|^p\Big)^{1/p} \simeq_p \Big( \sum_{n\geq 1} \mathbb{E}|g_n|^2\Big)^{1/2}  + \Big( \sum_{n\geq 1} \mathbb{E}|g_n|^p \Big)^{1/p},
\end{equation}
where $\simeq_p$ means equivalence of norms up to constants depending only on $p$. The equivalence  \eqref{Rosenthal} was initially established  in order to construct some new classes of Banach space  but over the years it has  been proven to have 
many  applications in other areas of mathematics. The martingale version of  \eqref{Rosenthal}  was discovered  almost simultaneously by Burkholder in \cite{Bu1}. In fact,  a $\Phi$-moment version was obtained by Burkholder that takes   the following form: if $\Phi$ is a convex Orlicz function on $[0,\infty)$ that satisfies the so called $\Delta_2$-condition then  for any martingale $f=(f_n)_{n\geq 1}$  adapted to a given  filtration $\{\Sigma_n\}_{n\geq 1}$ of $\sigma$-subalgebras  of $\Sigma$ satisfying $\sigma(\bigcup_{n\geq 1} \Sigma_n)=\Sigma$,   the following holds (here, we use the convention that $\Sigma_0=\Sigma_1$):

 \begin{equation}\label{Burkholder}
\sup_{n\geq 1} \mathbb{E}\big[\Phi (|f_n |)\big] \leq C_\Phi \mathbb{E}\big[ \Phi(s(f))\big] + \mathbb{E}\big[ \Phi(d^*)\big],
\end{equation}
where $s(f)=\big(\sum_{n\geq 1} \mathbb{E}[|df_n|^2|\Sigma_{n-1}] \big)^{1/2}$ is  the conditioned square function of  the martingale $f$ while  $d^*=\sup_{n\geq 1} |df_n|$ is the maximal function of its martingale difference sequence.
On the other hand,  noncommutative analogues of the Burkholder/Rosenthal inequalities 
for the case of $p$-th moments have been discovered by Junge and Xu in \cite{JX,JX3}. More precisely, they  obtained that if  $2\leq p <\infty$ and $x=(x_n)_{n\geq 1}$ is a noncommutative martingale that is $L_p$-bounded  then
\begin{equation}\label{nc1}
\big\|x\big\|_p \simeq_p \max\Big\{ \big\|s_c(x)\big\|_p,  \big\|s_r(x)\big\|_p, \big( \sum_{n\geq 1} \big\|dx_n\big\|_p^p \big)^{1/p}\Big\}
\end{equation}
where $s_c(x)$ and $s_r(x)$ denote the column version and the row version  of  conditioned square functions which we refer to the next section for formal definitions. In addition, they also managed to formulate and prove   the corresponding inequalities for  the range $1<p<2$ which are  dual to  \eqref{nc1} that  can be roughly stated as follows: if $x=(x_n)_{n\geq 1}$ is a noncommutative martingale in $L_2(\M)$ then 
\begin{equation}\label{nc2}
\big\|x \big\|_p \simeq_p \inf\Big\{ \big\|s_c(y)\big\|_p +  \big\|s_r(z)\big\|_p + \big( \sum_{n\geq 1} \big\|dw_n\big\|_p^p \big)^{1/p}\Big\}
\end{equation}
where  the infimum is taken over all $x=y+z +w$  with $y$, $z$, and $w$ are martingales. Reasons behind the fact that  the two cases $1<p<2$ and $2\leq p<\infty$ have to be different are now well-understood in the field. As shown in \cite{JX,JX3}, the equivalences \eqref{nc1} and \eqref{nc2} have far reaching applications ranging from random matrices to operator space classifications of some classes of subspaces of noncomutative $L_p$-spaces.
Recently, equivalences \eqref{nc1} and \eqref{nc2} were extended  to certain classes of noncommutative symmetric spaces for which we refer to \cite{RW} for details. 
 Motivated by these various results, we consider $\Phi$-moments of conditioned square functions of noncommutative martingales  in the spirit of \eqref{Burkholder}. 
 We obtain  natural extensions of the noncommutative Burkholder inequalities \eqref{nc1} and \eqref{nc2}. We work with semifinite von Neumann algebra equipped with normal semifinite faithful  trace $(\M,\T)$.  In formulating the right versions of $\Phi$-moments, one needs to consider martingales that are bounded in the noncommutative Orlicz space $L_\Phi(\M)$. In addition, we also require some conditions on the lower and upper Matuzewska-Orlicz indices  $p_\Phi$ and $q_\Phi$ of the convex function $\Phi$ which in some sense mimic  the role of the index $p$ in the  noncommutative Burkholder/Rosenthal inequalities.
 Our principal results may be viewed as common generalizations of \eqref{Burkholder}, \eqref{nc1}, and \eqref{nc2}. We may summarize  these results  as follow:
 
If $2<p_\Phi \leq q_\Phi<\infty$, then  for any $L_\Phi(\M)$-bounded martingale $x=(x_n)_{n\geq 1}$, 
 \begin{equation}\label{nc3}
\sup_{n\geq 1}\T\big[ \Phi\big(|x_n|\big)\big] \simeq_\Phi \max\Big\{ \T\big[ \Phi\big(s_c(x)\big)\big], \T\big[ \Phi\big(s_r(x)\big)\big] ,  \sum_{n\geq 1} \T\big[ \Phi\big(|dx_n|\big)\big] \Big\}.
\end{equation}

If $1<p_\Phi \leq q_\Phi<2$, then for any $L_\Phi(\M)$-bounded martingale $x=(x_n)_{n\geq 1}$, 
 \begin{equation}\label{nc4}
\sup_{n\geq 1}\T\big[ \Phi\big(|x_n|\big)\big] \simeq_\Phi \inf\Big\{ \T\big[ \Phi\big(s_c(y)\big)\big]+ \T\big[ \Phi\big(s_r(z)\big)\big] + \sum_{n\geq 1} \T\big[ \Phi\big(|dw_n|\big)\big] \Big\}
\end{equation}
where the infimum is taken over all $x=y+z+w$ with $y$, $z$, and $w$ are martingales. We refer to Theorem~\ref{main} and Theorem~\ref{pl2} for more detailed explanations of the notation used in the formulations of \eqref{nc3} and \eqref{nc4}.
These results complement the series  of $\Phi$-moment inequalities from \cite{Bekjan-Chen, Bekjan-Chen-Ose, Dirksen, Dirksen-Ricard}. We note that if $\Phi(t)=t^p$  for $1<p<\infty$, then these results become  exactly the  Junge and Xu's noncommutative Burkholder inequalities.  It is also important to note that the case of noncommutative symmetric spaces treated in \cite{RW} does not  imply the corresponding $\Phi$-moment  inequalities.

The original proof  of \eqref{Burkholder}  was primarily based on careful analysis of distribution functions which heavily relied  on  stopping times and  the so-called good $\lambda$-inequalities. Stopping times and good $\lambda$-inequalities  are very powerful  techniques in the classical setting.
Unfortunately, these techniques are not available in the noncommutative  setting. Therefore,   
our method of proof  has to rely on new ideas. Our approach  was primarily motivated by   an observation that singular values of measurable operators   are closely connected to $K$-functionals from interpolation theory.  Our strategy is to focus first on  \eqref{nc4}. As noted earlier,  we heavily employ results from interpolation theory. As in the case of noncommutative symmetric spaces, a simultaneous  decomposition version of \eqref{nc2}  from \cite{RW} also plays a significant role in our argument.  
The proof of  \eqref{nc3}  is a duality type-argument.    Since $\Phi$-moments are usually not defining a norm,  we had to provide  the proper connection between  any  given  Orlicz function and its complementary that is suitable for moment inequalities. This  connection appears as  an operator  equality   that may be viewed as operator reverse  to the classical 
 Young's inequality. We refer   to  Proposition~\ref{duality}  for the exact statement. We should point out that  for the case  of square functions, the proofs  of the $\Phi$-moment versions of the noncommutative Burkholder-Gundy in \cite{Bekjan-Chen, Dirksen-Ricard}  depend  on  some versions of $\Phi$-moment extensions  of  the noncommutative Khintchine inequalities.
 %
 
The paper is structured as follows. In Section~2, we  setup   some basic notation  and present some preliminary results 
 concerning  noncommutative Orlicz spaces and noncommutative martingales.  We review the constructions leading up to   all relevant Hardy type spaces that we need for our presentation. In Section~3, we isolate and prove some key inequalities involving  $\Phi$-moments, $K$-functionals, and $J$-functionals from interpolation theory. Section~4 is devoted to  the statements and proofs of our $\Phi$-moment versions of the noncommutative Burkholder inequalities. In Section~5, we examine the case of sums of noncommuting independent sequences of mean zero in the sense of \cite{JX3}. In particular, we provide $\Phi$-moment analogues of the noncommutative Rosenthal inequalities from \cite{JX3}.  We also provide the corresponding Rosenthal inequalities for noncommuting   independent sequences in noncommutative symmetric spaces. In the last section,  we discuss possible future direction for general $\Phi$-moments and list some related open problems.
\section{Preliminaries}

\subsection{Orlicz functions and noncommutative Orlicz spaces}

Throughout this paper,  $\M$  will always denote  a semifinite von Neumann algebra equipped with a faithful normal semifinite trace $\T$.
Assume that $\M$ is acting on a Hilbert space $H$. A closed densely defined operator $x$ on $H$ is said to be  affiliated with $\M$ if $x$ commutes with every unitary $u$ in the commutant $\M'$ of $\M$. If $a$ is a densely defined self-adjoint operator on $H$ and $a=\int_{-\infty}^\infty s \ de_s^a$ is its spectral decomposition, then for any Borel subset $B\subseteq \mathbb{R}$, we denote by $\chi_B(a)$ the corresponding spectral projection $\int_{-\infty}^\infty \chi_B(s) \ de_s^a$. An operator $x$ affiliated with $\M$ is called $\T$-measurable if there exists $s>0$ such that
$\T(\chi_{(s,\infty)}(|x|))<\infty$. It is known that the set of all $\T$-measurable  operators with respect to $(\M,\T)$ is a topological $*$-algebra which we will denote by $L_0(\M,\T)$. We refer to \cite{N, PX3, TAK2}  for unexplained terminology.
 For $x \in L_0(\M,\T)$, define 
the distribution  function of $x$ by setting for $s>0$,
\[
\lambda_s(x)=\T\big( \chi_{(s, \infty)}(|x|)\big).
\]
The   generalized singular value of $x$ is defined by
\[
\mu_t(x)=\inf\{s>0; \lambda_s(x)\leq t \}, \quad t>0.
\]
The function $t \mapsto \mu_t(x)$ from $(0,\infty)$ into $[0,\infty)$ is right-continuous and nonincreasing (\cite{FK}).  
We note that 
 for the case where $\M$ is the abelian von Neumann algebra $L_\infty(0,\infty)$ with  the trace given by  integration  with respect to the Lebesgue  measure, $L_0(\M,\T)$  becomes  the linear space of all measurable functions $L_0(0,\infty)$ and $\mu(f)$ is the decreasing rearrangement of  the function $|f|$ in the sense of \cite{LT}.

By     \emph{an Orlicz function} $\Phi$ on $[0, \infty)$, we mean  a continuous,  increasing, and convex function such that $\Phi(0)=0$ and $\lim_{t \to \infty} \Phi(t)=\infty$.  For examples and basic properties of Orlicz functions we refer to \cite{Kras-Rutickii, Maligranda, Maligranda2}.

 Given   an operator  $x \in L_0(\M,\T)$ and an Orlicz function $\Phi$, we may define $\Phi(|x|)$ through functional calculus.  That is, if $|x|=\int_0^\infty s\  de_s^{|x|}$
 is the spectral decomposition of $|x|$, then
 \[
 \Phi(|x|)=\int_0^\infty \Phi(s) \ de_s^{|x|}.
 \]
 The operator $\Phi(|x|)$ is then  a positive $\T$-measurable operator. It is important  to observe that the trace of $\Phi(|x|)$ can be calculated using  either the distribution function  of $|x|$  or  the  singular value function  of $|x|$. Indeed,  one can easily deduce from \cite[Corollary~2.8]{FK} that  if $x \in L_0(\M, \T)$, then  we have the identities:
\[
\T\big[\Phi\big(|x|\big)\big] =\int_0^\infty \lambda_s\big(|x| \big)\ d\Phi(s)=\int_0^\infty \Phi\big( \mu_t(x)\big) \ dt.
\]
The quantity $\T\big[\Phi\big(|x|\big)\big]$ will be   referred to as the \emph{$\Phi$-moment} of the operator $|x|$. Clearly, if we consider  the power function $\Phi(t)=t^p$ for $1\leq p<\infty$, then this reduces to the usual  notion   of $p$-th moment of $|x|$. It is however important to point out  that in general $\Phi$-moments do not necessarily define  a norm and therefore many tools used for various results on $p$-th moments are no longer available when dealing with $\Phi$-moments.

We will assume throughout that $\Phi$ satisfies a growth condition known as \emph{the $\Delta_2$-condition}. That is, for some constant $C>0$,
\[
\Phi(2t) \leq C \Phi(t), \quad t\geq 0.
\]
It is easy to check that $\Phi$ satisfies the $\Delta_2$-condition if and only if  for every $a>0$, there exists a constant $C_a >0$  such that $\Phi(at) \leq C_a \Phi(t)$ for all $t>0$. More generally, by functional calculus, if $0\leq x \in  L_0(\M,\T)$ and $a$ is a positive scalar  then  the following operator inequality holds:
\[
\Phi(ax) \leq C_a \Phi(x).
\]
One  can also deduce from the integral representation stated above and \cite[Theorem~4.4(iii)]{FK}  that  if $(x_i)_{i=1}^n$  is a finite sequence in $L_0(\M)$ and $(\alpha_i)_{i=1}^n  \subset (0,1)^n$ with $\sum_{i=1}^n \alpha_i=1$ then 
\begin{equation}\label{convex}
\T\Big[ \Phi\big(\big| \sum_{i=1}^n \alpha_i x_i\big|\big)\Big] \leq   \sum_{i=1}^n\alpha_i\T\big[ \Phi\big(|x_i |\big)\big].
\end{equation}
As a consequence of \eqref{convex} and  the $\Delta_2$-condition, we have  the quasi-triangle inequality:
\[
\T\big[ \Phi\big(|x + y|\big)\big]\leq  C_\Phi\Big(\T\big[ \Phi\big(|x |\big)\big] +
\T\big[ \Phi\big(|y|\big)\big]\Big).
\]
These inequalities will be used repeatedly throughout. Next, we introduce some standard indices for Orlicz functions. For a given Orlicz function $\Phi$,  we let 
\[
M(t,\Phi)=\sup_{s>0} \frac{\Phi(ts)}{\Phi(s)}, \quad t>0,
\]
and
\[
p_\Phi=\lim_{t\to 0^+} \frac{\log\big( M(t,\Phi) \big)}{\log t}, \quad q_\Phi=\lim_{t\to \infty} \frac{\log\big( M(t,\Phi) \big)}{\log t}.
\]
These are known as \emph{Matuzewska-Orlicz indices} of the Orlicz function $\Phi$. For more information on these indices and their connections with other indices, we refer to the monographs  \cite{Maligranda, Maligranda2}. In general, $1\leq p_\Phi \leq q_\Phi \leq \infty$ and the $\Delta_2$-condition is equivalent to $q_\Phi<\infty$. 

We now recall  the definition of  Orlicz spaces. For a given  Orlicz function $\Phi$, 
the Orlicz  function space $L_\Phi(0,\infty)$  is the set of all Lebesgue measurable functions $f$ defined on $(0,\infty)$ such that for some  constant $c>0$,
\[
\int_0^\infty \Phi\Big({|f(t)|}/{c}\Big)\ dt <\infty.
\]
If we equip $L_\Phi(0,\infty)$ with the Luxemburg norm:
\[
\big\|f\big\|_{L_\Phi}=\inf\Big\{ c>0: \int_0^\infty \Phi\Big({|f(t)|}/{c}\Big)\ dt \leq 1\Big\},
\]
then $L_\Phi(0,\infty)$ is a fully symmetric Banach function space  in the sense of \cite{DDP4}.
Moreover, the Boyd indices of $L_\Phi(0,\infty)$ coincide with the indices $p_\Phi$ and $q_\Phi$ (see \cite{Maligranda}).
We may define 
the noncommutative Orlicz space $L_\Phi(\M,\T)$ following the general scheme  of constructing noncommutative analogue of symmetric function spaces as described in \cite{DDP1, DDP4, Kalton-Sukochev, X}. 
 Note that under the $\Delta_2$-condition, $x \in L_\Phi(\M,\T)$ if and only if $\T[\Phi(|x|)]<\infty$.  Also, it is clear that if  $\Phi(t)=t^p$  with $1\leq p<\infty$,  then $L_\Phi(\M,\T)=L_p(\M,\T)$ where $L_p(\M,\T)$ is the usual noncommutative $L_p$-space associated with $(\M,\T)$.

We now gather some  preliminary results on noncommutative Orlicz spaces that we will need in the sequel. We assume that the next lemma is known but we could not find any specific reference.  We feel  that  a proof  is needed since in general $\Phi$-moments do not define a norm.
\begin{lemma}\label{convergence}
 Let  $(x_n)_{n\geq 1}$ be  a sequence in $L_\Phi(\M,\T)$  and $x\in L_\Phi(\M,\T)$.
 \begin{enumerate}[{\rm(i)}]
\item If  $\lim_{n\to \infty} \|x_n-x\|_{L_\Phi(\M)}=0$ then
$\lim_{n\to\infty} \T\big[\Phi(|x_n|)\big]=\T\big[\Phi(|x|)\big]$.
\item If $(x_n)_{n\geq 1}$ converges to $x$ weakly in $L_\Phi(\M,\T)$ then
$\T\big[\Phi(|x|)\big] \leq \liminf_{n\to\infty} \T\big[\Phi(|x_n|)\big]$.
\end{enumerate}
\end{lemma}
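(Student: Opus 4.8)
The plan is to phrase everything in terms of the functional $F(x)=\T[\Phi(|x|)]=\int_0^\infty\Phi(\mu_t(x))\,dt$, which under the $\Delta_2$-condition is finite precisely on $L_\Phi(\M,\T)$ and, by \eqref{convex}, is convex there. I would first prove (i) via two one-sided estimates, each obtained by writing $x_n$ (respectively $x$) as a two-point convex combination and invoking \eqref{convex}, and then deduce (ii) from (i) together with convexity. A preliminary fact I would record is the standard comparison of the Luxemburg norm with the modular, $\|g\|_{L_\Phi}\le1\Rightarrow F(g)\le\|g\|_{L_\Phi}$ (because $\int_0^\infty\Phi(\mu_t(g)/\|g\|_{L_\Phi})\,dt\le1$ and $\Phi(ct)\le c\Phi(t)$ for $c\in[0,1]$); together with the operator $\Delta_2$-inequality $\Phi(a|g|)\le C_a\Phi(|g|)$ it shows that $\|x_n-x\|_{L_\Phi}\to0$ implies $F(\lambda(x_n-x))\to0$ for every $\lambda>0$.

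For the upper bound in (i), fix $\e\in(0,\tfrac12)$ and write $x_n=(1-\e)\tfrac{x}{1-\e}+\e\tfrac{x_n-x}{\e}$; then \eqref{convex} gives $F(x_n)\le(1-\e)F(\tfrac{x}{1-\e})+\e F(\tfrac{x_n-x}{\e})$, the last term tends to $0$ as $n\to\infty$, so $\limsup_nF(x_n)\le(1-\e)F(\tfrac{x}{1-\e})$, and letting $\e\to0^+$ — using dominated convergence in $\int_0^\infty\Phi(\mu_t(x)/(1-\e))\,dt$, with $\Delta_2$-domination $\Phi(\mu_t(x)/(1-\e))\le C\Phi(\mu_t(x))\in L_1(0,\infty)$ — yields $\limsup_nF(x_n)\le F(x)$. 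For the lower bound, write instead $x=(1-\e)\tfrac{x_n}{1-\e}+\e\tfrac{x-x_n}{\e}$, so that \eqref{convex} and the operator $\Delta_2$-inequality give $F(x)\le(1-\e)C_{1/(1-\e)}F(x_n)+\e C_{1/\e}F(x_n-x)$, hence $F(x)\le(1-\e)C_{1/(1-\e)}\liminf_nF(x_n)$; since the $\Delta_2$-constants satisfy $C_a\to1$ as $a\to1^+$ — for $a\in[1,2]$ the identity $as=(2-a)s+(a-1)(2s)$ and convexity give $\Phi(as)\le[(2-a)+(a-1)C]\Phi(s)$ with $C$ the doubling constant — letting $\e\to0^+$ gives $F(x)\le\liminf_nF(x_n)$. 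Combining the two estimates proves (i).

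Part (ii) is then essentially formal: $F$ is convex by \eqref{convex} and, by (i), sequentially continuous for the norm of $L_\Phi(\M,\T)$, hence norm continuous since that space is metrizable; thus every sublevel set $\{F\le c\}$ is convex and norm closed, therefore weakly closed by Mazur's theorem, so $F$ is weakly lower semicontinuous, which is exactly $\T[\Phi(|x|)]\le\liminf_n\T[\Phi(|x_n|)]$ whenever $x_n\to x$ weakly in $L_\Phi(\M,\T)$.

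The point requiring care — and the reason the lemma is not entirely trivial — is that $\Phi$-moments do not form a norm, so there is no triangle inequality to lean on; the argument therefore extracts everything from the sharp convexity inequality \eqref{convex} and the $\Delta_2$-condition, and one must keep track that every $\Phi$-moment occurring is finite (guaranteed by $\Delta_2$) and that the scalars $(1-\e)C_{1/(1-\e)}$ genuinely tend to $1$ as $\e\to0^+$.
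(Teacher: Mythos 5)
Your proof is correct, but it takes a genuinely different route from the paper, at least for part (i). The paper proves (i) by passing to singular value functions: it invokes the submajorization result of Dodds--Dodds--de Pagter to get $\|\mu(x_n)-\mu(x)\|_{L_\Phi}\to 0$, observes via the $\Delta_2$-condition that $\{\Phi(\mu(x_n))\}_n$ is uniformly integrable in $L_1(0,\infty)$, and then runs a subsequence/a.e.-convergence (Vitali-type) argument to conclude $\int\Phi(\mu_t(x_n))\,dt\to\int\Phi(\mu_t(x))\,dt$. You instead work entirely at the level of the modular $F(x)=\T[\Phi(|x|)]$: the norm--modular comparison $F(g)\le\|g\|_{L_\Phi}$ for $\|g\|_{L_\Phi}\le 1$, the convexity inequality \eqref{convex} applied to the two-point decompositions $x_n=(1-\e)\tfrac{x}{1-\e}+\e\tfrac{x_n-x}{\e}$ and $x=(1-\e)\tfrac{x_n}{1-\e}+\e\tfrac{x-x_n}{\e}$, and the quantitative observation (which you rightly supply, since the paper only asserts existence of $C_a$) that the $\Delta_2$-constants can be chosen with $C_a\to 1$ as $a\to 1^+$. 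This avoids submajorization and uniform integrability altogether and is arguably more elementary and more robust (it is a general ``norm convergence implies modular convergence under $\Delta_2$'' argument for convex modulars), while the paper's argument makes the mechanism via decreasing rearrangements explicit and yields the intermediate fact $\|\mu(x_n)-\mu(x)\|_{L_\Phi}\to 0$, which is of independent use. For part (ii) the two arguments are essentially the same: the paper applies Mazur's theorem concretely, extracting block convex combinations converging in norm and then using (i) together with \eqref{convex}, whereas you use Mazur abstractly (convex norm-closed sublevel sets are weakly closed, hence $F$ is weakly lower semicontinuous); both rest on exactly the convexity of $F$ and part (i). Your steps all check out, including the implicit monotone-convergence justification that the modular at the Luxemburg norm is at most $1$, and the dominated-convergence passage $F(x/(1-\e))\to F(x)$.
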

\begin{proof} Let us begin with the first item. Recall that since $\Phi$ satisfies the $\Delta_2$-condition, a sequence $(f_n)_{n\geq 1}$ in $L_\Phi$ converges in norm to $f$ in $L_\Phi$ if and only if $\lim_{n\to \infty}\int_0^\infty \Phi(|f_n(t)-f(t)|)\ dt =0$. Therefore,  $\lim_{n\to \infty}\|x_n- x\|_{L_\Phi(\M)}=0$ if and only if  $\lim_{n\to \infty}\int_0^\infty\Phi( \mu_t(x_n-x))\ dt=0$.  We have from \cite[Theorem~3.4]{DDP1} that for every $n\geq 1$, the function  $|\mu(x_n)-\mu(x)|$ is submajorized by $\mu(x_n-x)$ in the sense that  for every $t>0$, 
\[
\int_0^t |\mu_s(x_n)-\mu_s(x)|\ ds  \leq  \int_0^t \mu_s(x_n - x) \ ds.
\]
Since $L_\Phi(0,\infty)$ is fully symmetric,  it follows that $\lim_{n\to \infty}\|\mu(x_n)-\mu(x)\|_{L_\Phi}=0$. next, we observe that 
$\{\Phi( \mu(x_n) ); n\geq 1\}$ is a uniformly integrable subset of $L_1(0,\infty)$.
This is the case since by  the $\Delta_2$-condition, there is a constant $C_\Phi$ so that for every $n\geq 1$, we have $\Phi(\mu(x_n)) \leq C_\Phi \Phi(|\mu(x_n)-\mu(x)|) + C_\Phi \Phi(\mu(x))$.

Now, fix an arbitrary  subsequence $(y_n)_{n\geq 1}$ of $(x_n)_{n\geq1}$.
 There exists a further  subsequence $(y_{n_k})_{k\geq 1}$   of $(y_n)_{n\geq 1}$ so that $\mu(y_{n_k}) \to \mu(x)$ a.e. By uniform integrability of $\{\Phi( \mu(x_n) ); n\geq 1\}$, we have
 \[\lim_{k\to \infty}\int_{0}^\infty \Phi(\mu_t(y_{n_k})) \ dt = \int_{0}^\infty \Phi(\mu_t(x))\  dt.\]
This is equivalent to $\lim_{k\to \infty} \T\big[ \Phi\big(|y_{n_k}|\big) \big]=\T\big[ \Phi\big(|x|\big) \big]$. Therefore, we have shown that every subsequence of 
 $\{\T\big[ \Phi\big(|x_n|\big) \big]\}_{n\geq 1}$ has a  further subsequence that converges to  $\T\big[ \Phi\big(|x|\big) \big]$. This proves  that
$
\lim_{n\to\infty} \T\big[ \Phi\big(|x_n|\big) \big]=\T\big[ \Phi\big(|x|\big) \big]
$
as claimed.

For the second item, assume that $x_n \to x$ weakly and  let $\xi$ be  a limit point of the bounded  sequence $\{\T\big[ \Phi\big(|x_n|\big) \big]\}_{n\geq1}$. Fix a subsequence $(y_n)$ of $(x_n)$ such that $\xi=\lim_{n\to \infty}\T\big[ \Phi\big(|y_n|\big) \big]$. Next, we choose a sequence $(z_n)$  consisting of block convex combinations of $(y_n)$ such that $\lim_{n\to \infty}\|z_n-x\|_{L_\Phi(\M)}=0$. From the first item, we have $\T\big[ \Phi\big(|x|\big) \big]=\lim_{n\to \infty}\T\big[ \Phi\big(|z_n|\big) \big]$. For each $n\geq 1$, write
$z_n=\sum_{j=p_n}^{q_n} \alpha_j y_j$ with $1\leq p_1<q_1<p_2<q_2 <\cdots$,  $\alpha_i \in [0,1]$ for all $i\geq 1$, and $\sum_{i=p_n}^{q_n} \alpha_i=1$ for all $n\geq 1$. It follows from \eqref{convex} that 
\begin{align*}
\T\big[ \Phi\big(|x|\big) \big] &=\lim_{n\to \infty}\T\big[ \Phi\big(|z_n|\big) \big]\\
&\leq  \lim_{n\to \infty} \sum_{i=p_n}^{q_n} \alpha_i \T\big[ \Phi\big(|y_i|\big) \big]\\
&=\lim_{n\to \infty} \T\big[ \Phi\big(|y_n|\big) \big]=\xi.
\end{align*}
The desired inequality follows from taking the infimum over all  such limit points.
\end{proof}

We now discuss  some background on complementary Orlicz functions. Let  $\Phi$ be an Orlicz function. It is well-known that $\Phi$ admits an integral representation
\[
\Phi(u)=\int_0^u \phi(s)\ ds,\quad u>0,
\]
where $\phi$ is a nondecreasing right-continuous function  defined on the interval $[0,\infty)$. The function $\phi$ is usually referred to as the right derivative of $\Phi$.
Let $\psi(t)=\sup\{s: \phi(s)\leq t\}$ be the right inverse of $\phi$. We observe  that $\psi$ is a nondecreasing right-continuous function on $[0,\infty)$ and  if $\phi$ is a continuous function then $\psi$ is the usual inverse of $\phi$. We define the  Orlicz \emph{complementary}  function to $\Phi$ by setting:
\[
\Phi^*(v)=\int_0^v \psi(t)\ dt, \quad v>0.
\]
Clearly, $\Phi^*$ is an Orlicz function and  under some natural conditions on $\Phi$, there is a  canonical  duality between the noncommutative  Orlicz spaces $L_\Phi(\M,\T)$ and $L_{\Phi^*}(\M,\T)$. 
 We refer to \cite[Chap.9]{Maligranda2} for more detailed  accounts of such duality in the commutative case. 
It is worth mentioning that  for the special case where  $\Phi(u)=u^r/r$ for some  $1<r<\infty$ then $\Phi^*(v)=  v^{r'}/{r'}$ where $r'$ denotes the index conjugate to $r$.  Therefore, we may view $\Phi^*$ as the Orlicz function analogue of the concept of index  conjugates. In fact,   from \cite[Corollary~11.6]{Maligranda2}, the indices of $\Phi^*$ satisfy:
\[
1/p_\Phi + 1/q_{\Phi^*} =1/p_{\Phi^*} +1/q_{\Phi} =1.
\]
 We refer to \cite[Chap.~I]{Kras-Rutickii} for more in depth discussion on   connections between $\Phi$  and $\Phi^*$.  Another fact that is of particular importance  for our purpose is the so-called \emph{Young's inequality} which states that for every $u,v \geq 0$, the following inequality holds:
\[
uv\leq \Phi(u) +\Phi^*(v).
\]

As an elementary application of Young's inequality, we record the following  lemma for further use.
\begin{lemma}\label{Young} For every $x \in L_\Phi(\M)$ and $y\in L_{\Phi^*}(\M)$, $xy \in L_1(\M)$ and
\[
\|xy\|_1 \leq \T\big[\Phi\big(|x|\big)\big] +  \T\big[\Phi^*\big(|y|\big)\big]. 
\]
\end{lemma}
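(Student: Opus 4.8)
The plan is to reduce the operator statement to a scalar inequality between generalized singular value functions and then apply the classical Young inequality pointwise. First I would invoke the H\"older-type submajorization for generalized singular values of Fack and Kosaki (see \cite{FK}): for any $a,b\in L_0(\M,\T)$ and every $t>0$,
\[
\int_0^t \mu_s(ab)\, ds \le \int_0^t \mu_s(a)\,\mu_s(b)\, ds.
\]
Since $L_0(\M,\T)$ is a $*$-algebra the product $xy$ again belongs to $L_0(\M,\T)$, and letting $t\to\infty$ in the displayed inequality with $a=x$ and $b=y$ gives
\[
\|xy\|_1=\int_0^\infty \mu_t(xy)\, dt \le \int_0^\infty \mu_t(x)\,\mu_t(y)\, dt.
\]

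Next I would bound the right-hand side. For each fixed $t>0$, Young's inequality applied to the nonnegative numbers $u=\mu_t(x)$ and $v=\mu_t(y)$ yields $\mu_t(x)\,\mu_t(y)\le \Phi\big(\mu_t(x)\big)+\Phi^*\big(\mu_t(y)\big)$. Integrating this over $(0,\infty)$ and invoking the integral representation of $\Phi$-moments recorded above, applied once to $\Phi$ and once to the Orlicz function $\Phi^*$, I obtain
\[
\int_0^\infty \mu_t(x)\,\mu_t(y)\, dt \le \int_0^\infty \Phi\big(\mu_t(x)\big)\, dt + \int_0^\infty \Phi^*\big(\mu_t(y)\big)\, dt = \T\big[\Phi(|x|)\big]+\T\big[\Phi^*(|y|)\big].
\]
Combining the two displays gives the claimed norm estimate; if the right-hand side is infinite there is nothing to prove.

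Finally I would justify the membership $xy\in L_1(\M)$, which deserves a separate word because $\Phi^*$ is not assumed to satisfy the $\Delta_2$-condition, so $\T\big[\Phi^*(|y|)\big]$ need not be finite merely from $y\in L_{\Phi^*}(\M)$. Here I would argue by scaling: choose $c>0$ with $\T\big[\Phi^*(|y|/c)\big]<\infty$, which is possible since $y\in L_{\Phi^*}(\M)$, and note that $\T\big[\Phi(|x|)\big]<\infty$ because $\Phi$ is $\Delta_2$ and $x\in L_\Phi(\M)$; applying the inequality already established to the pair $x$, $y/c$ shows $x(y/c)\in L_1(\M)$, whence $xy\in L_1(\M)$. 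I do not expect a genuine obstacle in this lemma: the Fack--Kosaki submajorization is doing the essential work of converting an operator product into a pointwise product of singular value functions, and the only mild subtlety is the finiteness bookkeeping around $\Phi^*$, which the scaling argument above dispenses with.
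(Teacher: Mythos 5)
Your proof is correct and follows essentially the same route as the paper: the Fack--Kosaki submajorization $\int_0^t\mu_s(xy)\,ds\le\int_0^t\mu_s(x)\mu_s(y)\,ds$ followed by the pointwise scalar Young inequality and the integral formula for $\Phi$-moments. Your extra scaling step to secure $xy\in L_1(\M)$ when $\T[\Phi^*(|y|)]=\infty$ is a welcome refinement, since the paper's own proof simply assumes the membership at the outset.
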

\begin{proof} 
First, we note from basic properties of  generalized singular values that if   $xy \in L_1(\M)$ then
using properties of singular values (\cite[Theorem~4.2]{FK}),
\[
\|xy\|_1 =\int_0^\infty \mu_t(xy)\ dt
\leq  \int_0^\infty \mu_t(x)\mu_t(y)\ dt.
\]
By Young's inequality,  we deduce that
\[
\|xy\|_1\leq  \int_0^\infty \Phi(\mu_t(x))\ dt  +\int_0^\infty \Phi^*(\mu_t(y))\ dt,
\]
which is clearly the desired inequality.
\end{proof}

Our next result may be viewed as an operator reverse Young's inequality and could be of independent interest.

\begin{proposition}\label{duality}
Let $\Phi$ be an Orlicz function with $1<p_\Phi \leq q_\Phi <\infty$.  For every $0\leq x \in L_\Phi(\M)$ there exists $0\leq y \in L_{\Phi^*}(\M)$ such that $y$ commutes with $x$  and satisfies
\[
xy=\Phi(x) +\Phi^*(y).
\]
\end{proposition}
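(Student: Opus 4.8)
The plan is to take for $y$ the operator $\phi(x)$, where $\phi$ is the right derivative of $\Phi$, defined via the Borel functional calculus of the positive operator $x$. Since $\phi$ is nondecreasing, the scalar inequality $\chi_{(\phi(s_0),\infty)}(\phi(s))\le\chi_{(s_0,\infty)}(s)$ holds for all $s\ge 0$ and every fixed $s_0\ge 0$; choosing $s_0$ with $\T(\chi_{(s_0,\infty)}(x))<\infty$ and applying functional calculus to $x$, we get $\T(\chi_{(\phi(s_0),\infty)}(\phi(x)))<\infty$, so $y=\phi(x)$ is a genuine positive $\T$-measurable operator. Being a Borel function of the single self-adjoint operator $x$, it automatically commutes with $x$. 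Hence only two things remain: the identity $xy=\Phi(x)+\Phi^*(y)$ and the membership $y\in L_{\Phi^*}(\M)$.

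For the identity, I would first record the scalar equality
\[
s\,\phi(s)=\Phi(s)+\Phi^*\big(\phi(s)\big),\qquad s\ge 0,
\]
which is precisely the equality case of Young's inequality: $\Phi(s)=\int_0^s\phi$ is the area below the graph of $\phi$ over $[0,s]$, while $\Phi^*(\phi(s))=\int_0^{\phi(s)}\psi$ (with $\psi$ the right inverse of $\phi$) is the area below the graph of $\psi$ over $[0,\phi(s)]$, and these two regions tile the rectangle $[0,s]\times[0,\phi(s)]$ up to a Lebesgue-null set (here the right-continuity of $\phi$ and the definition $\psi(t)=\sup\{s:\phi(s)\le t\}$ are what is used), so their areas sum to $s\,\phi(s)$. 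Now the functions $s\mapsto s$, $\phi$, $\Phi$ and $\Phi^*\circ\phi$ are Borel on $[0,\infty)$, and on the topological $*$-algebra $L_0(\M,\T)$ the assignment $f\mapsto f(x)$ is additive and multiplicative; applying it to the displayed identity yields $x\,\phi(x)=\Phi(x)+\Phi^*(\phi(x))$, that is, $xy=\Phi(x)+\Phi^*(y)$. (If one wishes to avoid unbounded functional calculus, the same follows by multiplying through by the spectral projections $e_n=\chi_{[0,n]}(x)$, where all operators are bounded and the identity holds by bounded functional calculus, and letting $n\to\infty$, everything commuting with each $e_n$.)

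Finally, for $y\in L_{\Phi^*}(\M)$ the index hypotheses enter. The condition $p_\Phi>1$ guarantees that $\Phi^*$ is a finite-valued Orlicz function, so $\Phi^*(y)$ is meaningful. For integrability, I would use that $\phi$ is nondecreasing to get $s\,\phi(s)\le\int_s^{2s}\phi(u)\,du\le\Phi(2s)$, and then the $\Delta_2$-condition (equivalently $q_\Phi<\infty$) to obtain $s\,\phi(s)\le C_\Phi\,\Phi(s)$ for all $s\ge 0$. By functional calculus this becomes the operator inequality $\Phi^*(y)=x\,\phi(x)-\Phi(x)\le x\,\phi(x)\le C_\Phi\,\Phi(x)$, and taking traces, using $x\in L_\Phi(\M)$ (so $\T[\Phi(x)]<\infty$), gives $\T[\Phi^*(y)]\le C_\Phi\,\T[\Phi(x)]<\infty$. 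Thus $y\in L_{\Phi^*}(\M)$, and both terms on the right of $xy=\Phi(x)+\Phi^*(y)$ lie in $L_1(\M)$, in accordance with Lemma~\ref{Young}.

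The one point requiring care is the passage from the scalar identity to the operator identity when $x$ is unbounded, i.e.\ ensuring that the products and sums of the possibly unbounded functions $x$, $\phi(x)$, $\Phi(x)$, $\Phi^*(\phi(x))$ of the single operator $x$ behave as expected; this is routine once one works inside $L_0(\M,\T)$ or truncates with spectral projections as above, and the remainder of the argument is elementary.
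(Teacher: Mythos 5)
Your proof is correct and follows essentially the same route as the paper: take $y=\phi(x)$, transfer the equality case of Young's inequality $s\,\phi(s)=\Phi(s)+\Phi^*(\phi(s))$ to the operator level by functional calculus, and use the pointwise estimate $s\,\phi(s)\le C_\Phi\,\Phi(s)$ (which the paper obtains by quoting $b_\Phi<\infty$, equivalent to the $\Delta_2$-condition, from \cite{Maligranda}) to get $\T\big[\Phi^*(y)\big]\lesssim\T\big[\Phi(x)\big]<\infty$. The only differences are cosmetic: you prove the two scalar ingredients directly instead of citing \cite{Kras-Rutickii} and \cite{Maligranda}, and you add a (correct) verification that $\phi(x)$ is $\T$-measurable, which the paper leaves implicit.
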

\begin{proof} We note first that since $p_\Phi>1$, we have $q_{\Phi^*}<\infty$ and therefore $\Phi^*$ satisfies the $\Delta_2$-condition. Let $\phi$ denote the right derivative of $\Phi$. The proposition is a consequence of the  following fact which can be found in \cite[p.13]{Kras-Rutickii} (see also \cite[ p.48]{Maligranda2}):
\[
uv= \Phi(u) + \Phi^*(v) \iff   v=\phi(u). 
\]
That is, at the function level, the following identity holds:
\[  
u\phi(u) =\Phi(u) +\Phi^*(\phi(u)), \quad u\geq 0.
\]
We remark that since the function $\phi$ is  monotone, it is Borel measurable. Using functional calculus on the positive  operator $x$,  the preceding identity yields:
\[
 x\phi(x) =\Phi(x) +\Phi^*(\phi(x)).
\]
It is enough to consider  $y=\phi(x)$. Clearly, $y\geq 0$ and commutes with $x$. To verify  that $y \in L_{\Phi^*}(\M)$, we appeal to another index
of $\Phi$ defined as follows: 
\[
b_\Phi :=\sup_{t>0} \frac{t\Phi'(t)}{\Phi(t)} =\sup_{t>0} \frac{t\phi(t)}{\Phi(t)}.
\]
In general, we only  have $q_\Phi \leq b_\Phi$  but the relevant property  we need is that   the $\Delta_2$-condition  is equivalent to $b_\Phi<\infty$.
These facts were taken from \cite[Theorem~3.2]{Maligranda}. The crucial observation  we make is that for every $t>0$,
\[
t\phi(t) \leq  b_\Phi \Phi(t), 
\]
where the function on the right hand side is finite for all $t>0$. Thus, 
by functional calculus and the definition of $y$, the preceding inequality yields the operator inequality:
\[
0\leq xy \leq b_\Phi \Phi(x).
\]
This  is equivalent to 
$
\Phi^*(y) \leq (b_\Phi -1) \Phi(x)$. Taking traces, we have
\[
\T\big[ \Phi^*(y)\big]   \leq (b_\Phi -1) \T\big[\Phi(x)\big].
\]
Since $x \in L_\Phi(\M)$,  the right hand side is finite and therefore, we have $\T\big[ \Phi^*(y)\big]<\infty$. As  $\Phi^*$ satisfies  the $\Delta_2$-condition, this  is equivalent to  $y \in L_{\Phi^*}(\M)$. The proof is complete.
\end{proof}


\subsection{Noncommutative martingales}
Let us now review the general setup for noncommutative martingales. For simplicity, we assume for the remaining of the paper that $\M_*$ is separable.
In the sequel, we always denote by $(\M_n)_{n \geq 1}$ an
increasing sequence of von Neumann subalgebras of ${\M}$
whose union  is weak*-dense in
$\M$. For $n\geq 1$, we assume that there exists a trace preserving conditional expectation ${\E}_n$ 
from ${\M}$ onto  ${\M}_n$.  It is well-known that  for $1\leq p\leq \infty$, $\E_n$ extends to a contractive projection from $L_p(\M,\T)$ onto $L_p(\M_n, \T_n)$,  where $\T_n$  denotes the restriction of $\T$ on $\M_n$. More generally, if $\Phi$ is an Orlicz function, then  since $L_\Phi(0,\infty)$ is  fully symmetric,   it follows that  $\E_n$  is a contractive  projection  from $L_\Phi(\M,\T)$ onto $L_\Phi(\M_n,\T_n)$ (see for instance, \cite[Proposition~2.1]{Dirk-Pag-Pot-Suk}). 

\begin{definition}
A sequence $x = (x_n)_{n\geq 1}$ in $L_1(\M)+\M$ is called \emph{a
noncommutative martingale} with respect to $({\M}_n)_{n \geq
1}$ if $\mathcal{E}_n (x_{n+1}) = x_n$ for every $n \geq 1.$
\end{definition}
If in addition, all $x_n$'s belong to $L_\Phi(\M)$  for a given Orlicz function $\Phi$, then  $x$ is called an $L_\Phi(\M)$-martingale.
In this case, we may define
\begin{equation*}\| x \|_{L_\Phi(\M)} = \sup_{n \geq 1} \|
x_n \|_{L_\Phi(\M)}.
\end{equation*}
For the case where   $\| x \|_{L_\Phi(\M)} < \infty$, then $x$ is called
a bounded $L_\Phi(\M)$-martingale. We note that if  the indices of $\Phi$ satisfy $1<p_\Phi \leq q_\Phi<\infty$, then $L_\Phi(\M)$ is a reflexive space.  In this case,  any  bounded $L_\Phi(\M)$-martingale $(x_n)_{n\geq 1}$  converges to some $x_\infty$ in $L_\Phi(\M)$ that satisfies $\E_n(x_\infty)=x_n$ for all $n\geq 1$. From this fact, whenever $1<p_\Phi\leq q_\Phi<\infty$, we will not make any distinction between  operators in $L_\Phi(\M)$ and bounded $L_\Phi(\M)$-martingales.

Let $x = (x_n)_{n\ge1}$ be a noncommutative martingale with respect to
$(\M_n)_{n \geq 1}$.  Define $dx_n = x_n - x_{n-1}$ for $n
\geq 1$ with the usual convention that $x_0 =0$. The sequence $dx =
(dx_n)_{n\ge1}$ is called the \emph{ martingale difference sequence} of $x$. 

In this paper, we will be mainly working with conditioned square functions and noncommutative conditioned Hardy spaces. We refer the reader to \cite{Bekjan-Chen, RW} for noncommutative Hardy spaces associated with  square functions. Recall  that if $x=(x_n)_{n\geq 1}$ is an $L_2(\M)+\M$-martingale,   then we can formally define:
\begin{equation}\label{cond-square-original}
 s_c (x) = \Big ( \sum_{k \geq 1} \E_{k-1}|dx_k |^2 \Big )^{1/2}
 \
\text{and}
 \ \ 
 s_r (x) = \Big ( \sum_{k \geq1} \E_{k-1}| dx^*_k |^2 \Big)^{1/2}.
 \end{equation}
These are called the column and row \emph{conditioned square functions of $x$}, respectively. We want  to emphasize that when $dx_k \notin L_2(\M) +\M$, then $|dx_k|^2$ may not be necessary in $L_1(\M) +\M$. Therefore, $\E_{k-1}|dx_k|^2$ is not necessarily a well-defined object. Thus,   extra cares are needed for martingales that do not belong to $L_2(\M) +\M$. Since the main topic of this paper is dealing with 
various inequalities involving  conditioned square functions, we will review the general construction which is based on  the so-called  conditioned spaces. These  were formally introduced  by  Junge  in \cite{Ju} for noncommutative $L_p$-spaces and were extensively used by Junge and Xu in \cite{JX,JX3}. Recently, these ideas were adapted in \cite{RW} to the case of  more general classes of noncommutative symmetric spaces. 
Below, we  use the usual  convention that $\E_0=\E_1$.

Let $\E:\M \to \N$ be a normal faithful conditional expectation, where $\N$ is a von Neumann subalgebra of $\M$. 
For $0<p\leq \infty$, we define  the conditioned space    $L_p^c(\M,\E)$ to be the completion of  $\M \cap L_p(\M)$ with respect to the quasi-norm
\[
\big\|x\big\|_{L_p^c(\M,\E)} =\big\|\E(x^*x)\big\|_{p/2}^{1/2}.
\]
It was shown in \cite{Ju} that for every $n$ and $0<p \leq \infty$,  there exists an isometric right $\M_n$-module map $u_{n,p}: L_p^c(\M, \E_n) \to L_p(\M_n;\ell_2^c)$ such that if $(e_{i,j})_{i,j \geq 1}$ is the family of unit matrices in $B(\ell_2(\mathbb{N}))$, then
\begin{equation}\label{u}
u_{n,p}(x)^* u_{n,q}(y)=\E_n(x^*y) \otimes e_{1,1},
\end{equation}
for all $x\in L_p^c(\M;\E_n)$ and $y \in L_q^c(\M;\E_n)$ with $1/p +1/q \leq 1$.
We now consider  the increasing sequence of expectations $(\E_n)_{n\geq 1}$.  Denote by $\mathcal{F}$ the collection of all finite sequences $(a_n)_{n\geq 1}$ in $L_1(\M) \cap \M$.
 For $0< p\leq \infty$,  define  the space $L_p^{\rm cond}(\M; \ell_2^c)$ to be   the completion  of $\mathcal{F}$  with respect to the (quasi) norm:
\begin{equation}\label{conditioned-norm}
\big\| (a_n) \big\|_{L_p^{\rm cond}(\M; \ell_2^c)} = \big\| \big(\sum_{n\geq 1} \E_{n-1}|a_n|^2\big)^{1/2}\big\|_p.
\end{equation}
The space $L_p^{\rm cond}(\M;\ell_2^c)$ can be isometrically embedded into an $L_p$-space associated to  a semifinite von  Neumann algebra  by means of the following map: 
\[
U_p : L_p^{\rm cond}(\M; \ell_2^c) \to L_p(\M \overline{\otimes} B(\ell_2(\mathbb{N}^2)))\]
defined by setting 
\[
U_p((a_n)_{n\geq 1}) = \sum_{n\geq 1} u_{n-1,p}(a_n) \otimes e_{n,1}
\]
 From \eqref{u}, it follows  
   that if $(a_n)_{n\geq 1} \in L_p^{\rm cond}(\M;  \ell_2^c)$ and $(b_n)_{n\geq 1} \in L_q^{\rm cond}(\M;  \ell_2^c)$  for $1/p +1/q \leq 1$ then
\begin{equation}\label{c-square}
U_p( (a_n))^* U_q((b_n)) =\big(\sum_{n\geq 1} \E_{n-1}(a_n^* b_n)\big) \otimes e_{1,1} \otimes e_{1,1}.
\end{equation}
In particular,  $\| (a_n) \|_{L_p^{\rm cond}(\M; \ell_2^c)}=\|U_p( (a_n))\|_p$ and 
hence  $U_p$ is indeed an isometry.  We note that $U_p$ is independent of $p$ in the sense of interpolation. Below, we will simply write $U$ for $U_p$.
We refer the reader to \cite{Ju} and \cite{Junge-Perrin} for more details on the preceding construction.

 In  \cite{RW},   the notion of conditioned spaces  were generalized  to the general context of noncommutative symmetric spaces.  We will only need here the special case of  noncommutative Orlicz spaces. We include the details for further use.
 
  We consider  the  algebraic linear  map $U$ restricted to   the linear space $\mathcal{F}$  that takes  its values in $L_1(\M \overline{\otimes} B(\ell_2(\mathbb{N}^2))) \cap \M \overline{\otimes} B(\ell_2(\mathbb{N}^2))$.  For a given 
   sequence $(a_n)_{n\geq 1} \in \mathcal{F}$, we set:
\[
\big\| (a_n) \big\|_{L_\Phi^{\rm cond}(\M; \ell_2^c)} = \big\| \big(\sum_{n\geq 1} \E_{n-1}|a_n|^2\big)^{1/2}\big\|_{L_\Phi(\M)}=\big\| U( (a_n))\big\|_{L_\Phi(\M \overline{\otimes} B(\ell_2(\mathbb{N}^2)))}.
\]
This is well-defined and  induces   a norm  on the linear space $\mathcal{F}$.  We define   the Banach space $L_\Phi^{\rm cond}(\M;  \ell_2^c)$ to be the completion of $\mathcal{F}$  with respect to the above  norm. Then  
 $U$ extends to an isometry from  $L_\Phi^{\rm cond}(\M;\ell_2^c)$ into $L_\Phi(\M \overline{\otimes} B(\ell_2(\mathbb{N}^2)))$ which we will still denote by $U$.

Similarly, we may define the corresponding  row version $L_\Phi^{\rm cond}(\M; \ell_2^r)$ which can also be viewed as a subspace of $L_\Phi(\M \overline{\otimes} B(\ell_2(\mathbb{N}^2)))$ as row vectors. 

Now we define the column/row  conditioned Orlicz-Hardy spaces. Let $\mathcal{F}_M$ denote  the set of all finite martingales in $L_1(\M) \cap \M$.
Define  $\h_\Phi^c (\mathcal{M})$ (respectively,  $\h_\Phi^r (\mathcal{M})$) as the completion  of $\mathcal{F}_M$  under the  norm $\| x \|_{\h_\Phi^c}=\| s_c (x) \|_{L_\Phi(\M)}$
(respectively,  $\| x \|_{\h_\Phi^r}=\| s_r (x) \|_{L_\Phi(\M)} $). We observe that  for every $x \in \mathcal{F}_M$, $\|x\|_{\h_\Phi^c} =
\| (dx_n)\|_{L_\Phi^{\rm cond}(\M; \ell_2^c)}$. Therefore, $\h_\Phi^c(\M)$ may be viewed as a subspace of $L_\Phi^{\rm cond}(\M; \ell_2^c)$.  More precisely, we consider the map
$\mathcal{D}: \mathcal{F}_M \to \mathcal{F}$ by setting $\mathcal{D}(x)= (dx_n)_{n\geq 1}$. Then $\mathcal{D}$ extends  to an isometry from $\h_\Phi^c(\M)$ into $L_\Phi^{\rm cond}(\M;\ell_2^c)$ which we will  denote by $\mathcal{D}_c$. In the sequel, we will make frequent use of the isometric embedding:
\[
U\mathcal{D}_c: \h_\Phi^c(\M)  \to  
L_\Phi(\M \overline{\otimes} B(\ell_2(\mathbb{N}^2))).
\]
We can make  similar  assertions for the row case. That is, $\h_\Phi^r(\M)$ embeds isometrically into $L_\Phi(\M \overline{\otimes} B(\ell_2(\mathbb{N}^2)))$.
We also need the diagonal Hardy space  $\h_\Phi^d(\M)$  which is the space of all martingales whose martingale difference sequences belong to $L_\Phi(\M \overline{\otimes} \ell_\infty)$ equipped with the norm $\|x\|_{\h_\Phi^d} := \|(dx_n)\|_{L_\Phi(\M \overline{\otimes} \ell_\infty)}$.  As above, we denote by $\mathcal{D}_d$ the isometric extension of $\mathcal{D}$ from $\h_\Phi^d(\M)$ into $L_\Phi(\M \overline{\otimes} \ell_\infty)$.  From boundedness of conditional expectations, one can easily verify that  $\mathcal{D}_d(\h_\Phi^d(\M))$ is a closed subspace of $L_\Phi(\M \overline{\otimes} \ell_\infty)$ which implies in turn  that $\h_\Phi^d(\M)$ is a Banach space.
 As noted in \cite{RW}, $\h_\Phi^d(\M)$, $\h_\Phi^c(\M)$, and $\h_\Phi^r(\M)$ are compatible in the sense that they embed  into a  larger Banach space. 
We now define the conditioned version of martingale Orlicz-Hardy spaces as follows.
If $1\leq p_\Phi \leq q_\Phi<2$, then
\begin{equation*}
\h_\Phi(\mathcal{M}) = \h_\Phi^d (\mathcal{M}) +  \h_\Phi^c (\mathcal{M})
+  \h_\Phi^r (\mathcal{M})
\end{equation*}
equipped with the  norm
\begin{equation*}
\| x \|_{\h_\Phi} = \inf \big\{ \| w \|_{ \h_\Phi^d} + \| y
\|_{ \h_\Phi^c} + \| z \|_{ \h_\Phi^r} \big\},
\end{equation*}
where the infimum is taken over all $w \in  \h_\Phi^d
(\M)$,  $y \in  \h_\Phi^c (\M)$, and $ z \in \h_\Phi^r
(\M)$ such that $ x = w + y + z.$
 If $2 \leq p_\Phi \leq q_\Phi <\infty$, then
\begin{equation*}\h_\Phi (\M) =  \h_\Phi^d
(\M) \cap  \h_\Phi^c (\M) \cap  \h_\Phi^r (\M)
\end{equation*}equipped with the norm\begin{equation*}
\| x \|_{\h_\Phi} = \max \big\{ \| x \|_{ \h_\Phi^d}, \|x
\|_{ \h_\Phi^c}, \| x \|_{ \h_\Phi^r} \big\}.
\end{equation*}
The  reason behind the consideration of different definitions according to $q_\Phi<2$
or $p_\Phi>2$ goes back to the  noncommutative Khintchine inequalities from \cite{LP4,LPI}.
For the particular case $\Phi(t)=t^p$ then $\h_\Phi(\M)=\h_p(\M)$  where $\h_p(\M)$ is the conditioned Hardy space as defined in \cite{Ju, JX}.  The space $\h_\Phi(\M)$ is the conditioned version of  martingale Orlicz Hardy spaces constructed from square  functions explicitly defined in \cite{Bekjan-Chen}.  As  a particular case of   the extensions of  the noncommutative  Burkholder inequalities to general  noncommutative symmetric spaces treated in \cite[Theorem~3.1]{RW}, we have 
the following identification:
\begin{equation}
\h_\Phi(\M) \approx_\Phi L_\Phi(\M)
\end{equation}
whenever $1<p_\Phi \leq q_\Phi<2$ or $2<p_\Phi\leq q_\Phi<\infty$.

\medskip

Let us now discuss $\Phi$-moments of conditioned square functions  for $x \notin L_2(\M) +\M$.   The important fact revealed by \eqref{c-square} is that if  $x$ is a martingale from  $\mathcal{F}_M$  then $s_c(x)$ (as defined above) can be identified  to the modulus of  the measurable operator $ U\mathcal{D}_c(x)$ in the space  $L_\Phi(\M \overline{\otimes} B(\ell_2(\mathbb{N}^2)))$. We extend this identity to  all  martingales  $x \in \h_\Phi^c(\M)$.  That is,  for each $x \in \h_\Phi^c(\M)$, we make  the convention  that  the column conditioned square function of  $x$  is given by:
\begin{equation}\label{cond-square}
s_c(x)= | U\mathcal{D}_c(x)|.
\end{equation}
Similarly, we may also define the corresponding  row version  by setting:
\[
s_r(y)=s_c(y^*)=| U\mathcal{D}_c(y^*)|,  \quad y \in \h_\Phi^r(\M).
\]
Clearly, the definition of $\h_\Phi^c(\M)$ allows  the identification for the norms:
\[
\big\|s_c(x)\big\|_{L_\Phi(\M)}= \big\| U\mathcal{D}_c(x) \big\|_{L_\Phi(\M \overline{\otimes}B(\ell_2(\mathbb{N}^2)))} =\big\| x \big\|_{\h_\Phi^c}.
\]
Accordingly,  $\Phi$-moments of  column conditioned square functions are  then understood as:
\begin{equation}\label{convension-moment}
\T\big[\Phi(s_c(x)) \big]= \T \otimes \Tr\big[\Phi\big(| U\mathcal{D}_c(x)|\big)\big],
\end{equation}
where $\Tr$ denotes the usual trace on $B(\ell_2(\mathbb{N}^2))$. We should warn the reader that when $x \notin L_2(\M) +\M$,  the $\Phi$-moment  $\T\big[\Phi(s_c(x)) \big]$ is only a suggestive notation as $s_c(x)$ may not exist in the sense of \eqref{cond-square-original}.  We also define $\T\big[\Phi(s_r(x)) \big]$ in a similar way.

 We end this subsection by recording the following  simultaneous decomposition result  that  we will  need in the sequel. 

\begin{theorem}[\cite{RW}]\label{simultaneous} There exists a family  $\{\kappa_p : 1<p<2\} \subset \mathbb{R}_+$  satisfying the following:
 if 
 $x\in L_1(\M) \cap L_2(\M)$, then there exist   $a \in \bigcap_{1<p<2} \h_p^d(\M)$, $b \in \bigcap_{1<p<2} \h_p^c(\M)$, and $c \in \bigcap_{1<p<2} \h_p^r(\M)$  such that:
\begin{enumerate}[{\rm(i)}]
\item  $x=a +b+ c$;
\item for every $1<p< 2$, the following inequality holds:
\[
\big\| a \big\|_{\h^d_p}
 + \big\| b \big\|_{\h_p^c} + \big\| c
\big\|_{\h_p^r}\leq \kappa_p \big\| x \big\|_p.
\]
\end{enumerate}
\end{theorem}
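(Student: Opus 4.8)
The plan is to reduce the statement to the (non-simultaneous) noncommutative Burkholder inequality \eqref{nc2} of Junge and Xu for each fixed $p\in(1,2)$, and then to upgrade the resulting family of decompositions to a single decomposition valid for all $p$ at once by a compactness/diagonal argument. First I would fix an auxiliary value, say $p_0=3/2$, and apply \eqref{nc2} to $x\in L_1(\M)\cap L_2(\M)\subseteq L_{p_0}(\M)$ to get martingales $a,b,c$ with $x=a+b+c$ and $\|a\|_{\h^d_{p_0}}+\|b\|_{\h^c_{p_0}}+\|c\|_{\h^r_{p_0}}\lesssim \|x\|_{p_0}$. The issue is that such a decomposition is only controlled in one norm, whereas we need control in \emph{all} $\h_p$-norms simultaneously and, crucially, the right-hand side should be $\|x\|_p$ rather than $\|x\|_{p_0}$.

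The key idea is to run the argument at the level of the ambient Banach space into which all three Hardy spaces embed (as recalled just before the statement: $\h^d_\Phi$, $\h^c_\Phi$, $\h^r_\Phi$ are compatible). Concretely, since $x\in L_1\cap L_2$, for each dyadic $p_k=2-2^{-k}$ one applies \eqref{nc2} to obtain a decomposition $x=a_k+b_k+c_k$ with
\[
\|a_k\|_{\h^d_{p_k}}+\|b_k\|_{\h^c_{p_k}}+\|c_k\|_{\h^r_{p_k}}\le C_{p_k}\|x\|_{p_k}.
\]
Because $x\in L_2$ we have $\|x\|_{p_k}\le \|x\|_2^{\theta_k}\|x\|_1^{1-\theta_k}$, all bounded, so in particular $(a_k),(b_k),(c_k)$ are bounded in $\h^c_2$-type norms; combined with the embeddings into $L_2(\M\overline\otimes B(\ell_2(\mathbb N^2)))$, these sequences are bounded in a Hilbert space and hence have weakly convergent subsequences. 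Pass to a subsequence along which $a_k\to a$, $b_k\to b$, $c_k\to c$ weakly (in the relevant $L_2$-type spaces, equivalently in each $\h_p$ for $1<p\le 2$ by reflexivity and compatibility); then $x=a+b+c$ still holds since the maps $\mathcal D_d,\mathcal D_c,\mathcal D_r$ and the reconstruction $x=\sum dx_n$ are weakly continuous on the relevant range. Finally, weak lower semicontinuity of each norm gives, for every fixed $p\in(1,2)$,
\[
\|a\|_{\h^d_p}+\|b\|_{\h^c_p}+\|c\|_{\h^r_p}\le \liminf_{k}\big(\|a_k\|_{\h^d_p}+\|b_k\|_{\h^c_p}+\|c_k\|_{\h^r_p}\big).
\]
The remaining point is to bound the right-hand side by $\kappa_p\|x\|_p$; since for $k$ large $p_k>p$ one uses the inclusion $\h^c_{p_k}\hookrightarrow \h^c_p$ locally — but this requires care because that inclusion need not be bounded on the whole space. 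Instead one estimates $\|a_k\|_{\h^d_p}$ etc.\ directly by interpolating the a priori bound in $\h^d_{p_k}$ against the uniform $\h^d_2$-bound, producing a bound of the form $C\|x\|_2^{1-\theta}\|x\|_{p_k}^{\theta}$; a second diagonalization (or a more careful single decomposition obtained by applying \eqref{nc2} once in a fixed $\h_p$ for $p$ near $1$ and controlling the higher $\h_q$-norms via the $\h_2$-information) yields the clean bound $\kappa_p\|x\|_p$.

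The main obstacle I anticipate is exactly this last step: producing a \emph{single} triple $(a,b,c)$ whose bound in $\h^d_p+\h^c_p+\h^r_p$ is controlled by $\|x\|_p$ for \emph{every} $p\in(1,2)$ simultaneously, rather than by $\|x\|_{p_0}$ for one fixed $p_0$, or by a mixed quantity like $\|x\|_2^{1-\theta}\|x\|_p^\theta$. The natural remedy — and I expect this is what the authors of \cite{RW} actually do — is to revisit the \emph{proof} of \eqref{nc2} rather than its statement: the Junge–Xu decomposition is built from a fixed construction (projections/stopping-time-type cut-offs in the conditioned-$L_p$ machinery) that does not genuinely depend on $p$, so one checks that the same operators $a,b,c$ work for all $p$ with the constant $\kappa_p$ coming from the $p$-dependent Khintchine/dual-Doob constants. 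Thus the real content is bookkeeping: extract from the known proof of \eqref{nc2} the fact that the decomposition map $x\mapsto(a,b,c)$ is a single linear-ish construction, and verify that each of the three Hardy-norm estimates it satisfies is $p$-independent in form with only the implicit constant depending on $p$. I would present the argument in that form, citing \cite[Theorem~3.1]{RW} and the noncommutative Burkholder inequality \eqref{nc2}, and carry out the weak-compactness diagonalization above only as a fallback if the explicit construction is not convenient to state.
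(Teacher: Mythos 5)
You should first note that this paper does not prove Theorem~\ref{simultaneous} at all: it is imported verbatim from \cite{RW}, so your ``fallback'' of citing that reference is in effect what the authors themselves do. Judged as a standalone argument, however, your main scheme has genuine gaps. Applying \eqref{nc2} at the exponent $p_k$ controls the pieces $a_k,b_k,c_k$ only in $\h_{p_k}^d(\M)$, $\h_{p_k}^c(\M)$, $\h_{p_k}^r(\M)$; nothing provides uniform $\h_2$-type bounds, so the claimed boundedness ``in a Hilbert space'' and the ensuing weak compactness are unjustified. Moreover, since $\T$ is only semifinite there are no inclusions between the spaces $\h_p^s(\M)$ for different $p$, and interpolating an $\h_{p_k}$-bound against an $\h_2$-bound can only reach exponents in $[p_k,2]$; it says nothing about the $\h_p$-norm for $p<p_k$, which is exactly what is needed. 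Finally, even granting weak limits, lower semicontinuity only yields $\|a\|_{\h_p^d}\le \liminf_k\|a_k\|_{\h_p^d}$, and the right-hand side is never controlled by $\|x\|_p$ for $p\neq p_k$: a sequence of decompositions, each optimal at its own exponent, need not produce in the limit a decomposition that is good at any other exponent. So the diagonalization does not deliver the bound $\kappa_p\|x\|_p$ for every $p\in(1,2)$.

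The proposed remedy --- that the Junge--Xu decomposition ``does not genuinely depend on $p$'', so the same triple works for all $p$ --- is precisely the nontrivial assertion to be proved, and it is not a feature of the standard proof of \eqref{nc2}: in \cite{JX} the case $1<p<2$ is obtained by duality from the case $2<p<\infty$ via a Hahn--Banach argument on an intersection space, which is non-constructive and produces a decomposition that does depend on $p$. The simultaneity is exactly the added content of the result in \cite{RW}, where it comes from an explicit construction that is independent of $p$ (in the spirit of the constructive decompositions of \cite{Ran18,Ran21}, as the present paper itself hints in Step~3 of the proof of Theorem~\ref{main}), not from manipulating the abstract statement \eqref{nc2}. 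As written, your outline therefore either collapses to a citation of \cite{RW} or leaves the essential step unproved.
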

\section{Interpolations and some key inequalities}

In this section, we recall some basic definitions  from interpolation theory and  provide  four inequalities that are at the core of our argument in the next section. These are stated in  Proposition~\ref{h-interpolation}, Proposition~\ref{K-iteration}, Proposition~\ref{J-iteration}, and Proposition~\ref{KJ-comparison}. Although we only need these results in the special case  of various noncommutative $L_p$-spaces, for the sake of clarity, we chose to work with the abstract context of compatible couple of general Banach spaces. Our main references for interpolation of general Banach spaces  are   \cite{BENSHA, BL, KaltonSMS}.

 Let $\overline{X}=(X_0, X_1)$ be a compatible couple of Banach spaces in the sense that $X_0$ and $X_1$ are continuously embedded into a Hausdorff topological vector space. Then we can form the sum $\Sigma(\overline{X})=X_0 +X_1$ and the intersection $\Delta(\overline{X})=X_0 \cap X_1$ which are Banach spaces under the norms
 \[
 \big\|x\big\|_{\Sigma(\overline{X})} =\inf\Big\{ \big\|x_0\big\|_{X_0} + \big\|x_1 \big\|_{X_1} : x=x_0 + x_1,  x_0 \in X_0, x_1 \in X_1 \Big\}
 \]
and
\[
\big\|x\big\|_{\Delta(\overline{X})} = \max\Big\{\|x\|_{X_0}, \|x\|_{X_1} \Big\},
\]
respectively. A Banach space $Z$ will be called an \emph{intermediate space} with respect to $\overline{X}$ if 
$\Delta(\overline{X}) \subseteq Z \subseteq \Sigma(\overline{X})$ with continuous embeddings. An intermediate space $Z$ is called an \emph{interpolation space} if whenever  a bounded linear operator $T: \Sigma(\overline{X}) \to \Sigma(\overline{X})$ is such that $T(X_0) \subseteq X_0$ and $T(X_1) \subseteq X_1$, we have $T(Z)\subseteq Z$
and \[
\|T:Z\to Z\|\leq C\max\Big\{\|T : X_0 \to X_0\| , \|T:X_1 \to X_1\|\Big\}\]
 for some constant $C$.  In this case, we write $Z \in {\rm Int}(X_0, X_1)$.
Examples  of interpolation spaces that are  relevant to  this article are  Orlicz spaces. Indeed,  we have $L_\Phi  \in {\rm Int}(L_{p_0}, L_{p_1})$ whenever $p_0<p_\Phi \leq q_\Phi <p_1$. In fact, 
the following noncommutative generalization of the classical  Marcinkiewicz interpolation of operators  was  used  in \cite{Bekjan-Chen} as one of the main tools  for dealing with various $\Phi$-moment inequalities.
 We only state here the version we need.
\begin{theorem}[{\cite[Theorem~2.1]{Bekjan-Chen}}]\label{Bekjan-Chen}
Let $\M_1$ and $\M_2$ be  two semifinite von Neumann algebras equipped with normal semifinite faithful traces  $\T_1$ and $\T_2$, respectively.  Assume that $1\leq p_0<p_1 \leq \infty$.  Let $T: L_{p_0}(\M_1) +L_{p_1}(\M_1) \to L_{p_0}(\M_2) +L_{p_1}(\M_2)$ be a linear operator that satisfies $T(L_{p_i}(\M_1)) \subseteq  L_{p_i}(\M_2)$ for 
$i=0,1$. If $\Phi$ is an Orlicz function with $p_0<p_\Phi \leq q_\Phi <p_1$, then there exists a constant $C$ depending only on $p_0$, $p_1$, and $\Phi$ such that for every $x \in L_\Phi(\M_1)$,
\[
\T_2\big[\Phi\big(|Tx|\big)\big] \leq C \T_1\big[\Phi\big(|x|\big)\big].
\]
\end{theorem}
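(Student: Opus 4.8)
The plan is to deduce this from the classical Marcinkiewicz-type interpolation theorem for Orlicz spaces on semifinite von Neumann algebras, combined with the (by-now standard) identification of noncommutative Orlicz spaces $L_\Phi(\M_i)$ as interpolation spaces of the couple $(L_{p_0}(\M_i), L_{p_1}(\M_i))$. Since the statement is attributed to \cite[Theorem~2.1]{Bekjan-Chen}, in writing this up I would follow their route rather than reprove from scratch; but let me describe how the argument goes.

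First I would reduce to the case $p_1 < \infty$: if $p_1 = \infty$ one replaces $L_\infty(\M_i)$ by $L_{r}(\M_i)$ for some finite $r$ with $q_\Phi < r$, using that $T$ maps $L_{p_0} + L_\infty$ into itself and that $\|Tx\|_r$ is controlled on the overlap; alternatively one invokes the endpoint-$\infty$ version of the Calderón–Mityagin description directly. Next, the heart of the matter is the pointwise estimate on singular values. One shows that for $x \in L_\Phi(\M_1)$ and $t>0$,
\[
\mu_t(Tx) \le C\,\inf_{x = x_0 + x_1}\Big( \mu_{t/2}(x_0)\,\|T:L_{p_0}\to L_{p_0}\|^{\alpha} + \text{(dual term in $x_1$)} \Big),
\]
more precisely a $K$-functional bound $K(t, Tx; L_{p_0}(\M_2), L_{p_1}(\M_2)) \le C\, K(ct, x; L_{p_0}(\M_1), L_{p_1}(\M_1))$, which is just the statement that $T$ is bounded on the sum with quasi-norm bounded by the max of the two endpoint norms. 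Then one uses the Holmstedt-type formula expressing $K(t,x)$ in terms of $\big(\int_0^{t^\theta}\mu_s(x)^{p_0}\,ds\big)^{1/p_0}$ and $\big(\int_{t^\theta}^\infty \mu_s(x)^{p_1}\,ds\big)^{1/p_1}$ for appropriate $\theta$.

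The key remaining step is then purely a one-variable (commutative) statement about rearrangements: if $f,g$ are nonincreasing functions on $(0,\infty)$ with $K(t,f;L_{p_0},L_{p_1}) \le C\,K(t,g;L_{p_0},L_{p_1})$ for all $t$, and $\Phi$ has Matuszewska–Orlicz indices strictly between $p_0$ and $p_1$, then $\int_0^\infty \Phi(f) \le C' \int_0^\infty \Phi(g)$. This is exactly where the index hypothesis $p_0 < p_\Phi \le q_\Phi < p_1$ is used: it guarantees that $L_\Phi(0,\infty)$ is an interpolation space for $(L_{p_0}(0,\infty), L_{p_1}(0,\infty))$ with the $K$-monotonicity (Calderón–Mityagin) property, and moreover that the $\Phi$-\emph{functional} (not just the norm) is controlled, via the submajorization/Hardy–Littlewood inequality together with the $\Delta_2$-condition and the estimate $t\Phi'(t)\le b_\Phi\Phi(t)$ of the type already recorded in Proposition~\ref{duality}. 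Concretely one writes $\int_0^\infty\Phi(\mu_t(Tx))\,dt = \int_0^\infty \Phi\big(\mu_t(Tx)\big)\,dt$ and, after the Holmstedt decomposition, splits into the $p_0$-part near $0$ and the $p_1$-part near $\infty$, applying Hardy-type inequalities $\int_0^\infty \Phi\big((1/s)\int_0^s h\big)\,ds \lesssim \int_0^\infty\Phi(h)\,ds$ (valid since $q_\Phi<p_1$, resp. $p_\Phi>p_0$ for the dual Hardy inequality), and finally re-assembling via the elementary identities $\T_i[\Phi(|y|)] = \int_0^\infty \Phi(\mu_t(y))\,dt$ recorded in Section~2.

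The main obstacle is the commutative Hardy-inequality/index bookkeeping in the last step: one must verify that both Hardy averaging operators are bounded on $L_\Phi(0,\infty)$ \emph{at the level of the modular $\int\Phi$} rather than merely the Luxemburg norm, which requires the full strength of $p_0 < p_\Phi$ and $q_\Phi < p_1$ (via $b_\Phi<\infty$ and its dual analogue) to absorb the constants; everything noncommutative about the problem has by then been pushed into the two facts $\mu_t(Tx)$ obeys a $K$-functional bound and $\T[\Phi(|\cdot|)] = \int\Phi(\mu_\cdot(\cdot))$, both already available. Since the statement is quoted from \cite{Bekjan-Chen}, in the paper I would simply cite it; the sketch above records the proof idea for completeness.
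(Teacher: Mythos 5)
The paper offers no proof of this statement: it is imported verbatim from \cite[Theorem~2.1]{Bekjan-Chen}, so your decision to cite rather than reprove is exactly what the paper does, and your sketch (endpoint boundedness giving $K$-functional domination, a Holmstedt-type description of $K$ for the couple $(L_{p_0},L_{p_1})$, then commutative modular estimates for the Calder\'on/Hardy operators under $p_0<p_\Phi\leq q_\Phi<p_1$, reassembled via $\T[\Phi(|y|)]=\int_0^\infty\Phi(\mu_t(y))\,dt$) is consistent with the argument in that reference. One caution should you write the sketch out: within the present paper the modular boundedness of the Calder\'on operators (Lemma~\ref{Phi}) is itself deduced from Theorem~\ref{Bekjan-Chen} applied to the abelian algebra $L_\infty(0,\infty)$, so the commutative Hardy-type modular inequalities must be established directly from the Matuzewska--Orlicz index conditions (as Bekjan and Chen do), not by quoting Lemma~\ref{Phi}; also, your first displayed ``pointwise estimate'' is not meaningful as written, though the $K$-functional formulation immediately following it is the correct statement.
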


The following properties of  conditioned  Orlicz-Hardy spaces  and diagonal Orlicz Hardy spaces are taken from \cite[Proposition 2.8]{RW}.
\begin{lemma}\label{comp-Int} Assume that $1<p_0  <p_\Phi \leq q_\Phi < p_1<\infty$.  Then:
\begin{enumerate}[{\rm(i)}]
\item $\h_\Phi^d(\M)$ is complemented in $L_\Phi(\M \overline{\otimes}\ell_\infty)$;
\item $\h_\Phi^c(\M)$ is complemented in $L_\Phi(\M \overline{\otimes} B(\ell_2(\mathbb{N}^2)))$;
\item for $s\in\{d,c,r\}$, we have $\h_\Phi^s(\M) \in {\rm Int}(\h_{p_0}^s(\M), \h_{p_1}^s(\M))$.
\end{enumerate}
\end{lemma}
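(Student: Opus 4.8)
The plan is to derive all three items from what is already known about the conditioned Hardy spaces on the noncommutative $L_p$-scale, combined with the interpolation property $L_\Phi(\mathcal N)\in{\rm Int}(L_{p_0}(\mathcal N),L_{p_1}(\mathcal N))$, which holds for any semifinite $\mathcal N$ as soon as $p_0<p_\Phi\le q_\Phi<p_1$, and which I would apply with $\mathcal N=\M\overline{\otimes}\ell_\infty$ and $\mathcal N=\M\overline{\otimes}B(\ell_2(\mathbb{N}^2)))$. Fix $1<p_0<p_\Phi\le q_\Phi<p_1<\infty$ once and for all.

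\emph{Step 1: endpoint projections.} For the diagonal case I would use the explicit operator $Q^d$ sending a sequence $(a_n)_n$ to $(\E_n(a_n)-\E_{n-1}(a_n))_n$; acting coordinatewise it is $2$-bounded on $L_p(\M_n)$ for every $1\le p\le\infty$, so $Q^d$ is a bounded idempotent on $L_p(\M\overline{\otimes}\ell_\infty)$ for every such $p$, whose range is $\mathcal D_d(\h_p^d(\M))$ (a sequence is fixed by $Q^d$ exactly when it is a martingale difference sequence, and such a sequence is the norm limit of its truncations, which are finite martingale differences). For the column case I would recall from \cite{Ju,JX,JX3} that, using the isometric right-module maps $u_{n-1,p}$ of \eqref{u}, one builds an idempotent $Q^c$ on $L_p(\M\overline{\otimes}B(\ell_2(\mathbb{N}^2))))$ — project onto the ``first column'', then onto the ranges of the $u_{n-1,p}$ by conditional-expectation–type maps, then kill the non-difference part by $a_n\mapsto\E_n a_n-\E_{n-1}a_n$ — which is bounded for every $1<p<\infty$ with range $U\mathcal D_c(\h_p^c(\M))$, and for which a \emph{single} algebraic operator, defined on $\mathcal F$, serves simultaneously for $p=p_0$ and $p=p_1$. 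The row case is the column case applied to adjoints, $Q^r(\xi)=(Q^c(\xi^*))^*$.

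\emph{Step 2: items (i) and (ii).} Put $\mathcal N=\M\overline{\otimes}\ell_\infty$ (resp. $\M\overline{\otimes}B(\ell_2(\mathbb{N}^2))))$. Since $Q^d$ (resp. $Q^c$) is one operator bounded on both $L_{p_0}(\mathcal N)$ and $L_{p_1}(\mathcal N)$, and $L_\Phi(\mathcal N)\in{\rm Int}(L_{p_0}(\mathcal N),L_{p_1}(\mathcal N))$, it is bounded on $L_\Phi(\mathcal N)$, and being idempotent there its range is a closed subspace. It acts as the identity on the dense subspace $\mathcal D_d(\mathcal F_M)$ of $\mathcal D_d(\h_\Phi^d(\M))$ (resp. on $U\mathcal D_c(\mathcal F_M)$), and — using that $\|\cdot\|_{L_\Phi}$ is order continuous because $q_\Phi<\infty$, so that any fixed point of $Q^d$ (resp. $Q^c$) lying in $L_\Phi(\mathcal N)$ is an $L_\Phi$-limit of elements of that dense subspace — its range is exactly $\mathcal D_d(\h_\Phi^d(\M))$ (resp. $U\mathcal D_c(\h_\Phi^c(\M))$). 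This proves items (i) and (ii).

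\emph{Step 3: item (iii).} For $s\in\{d,c,r\}$ let $j_s$ be the relevant isometric embedding of $\h_p^s(\M)$ into $L_p(\mathcal N)$ ($\mathcal D_d$ for $s=d$, $U\mathcal D_c$ for $s=c$, and its adjoint version for $s=r$), and set $\pi_s:=j_s^{-1}\circ Q^s:L_p(\mathcal N)\to\h_p^s(\M)$, so $\pi_s j_s={\rm id}_{\h_p^s(\M)}$; by Step 1 both $j_s$ and $\pi_s$ are bounded for $p=p_0$ and $p=p_1$, i.e. $(\h_{p_0}^s(\M),\h_{p_1}^s(\M))$ is a retract of $(L_{p_0}(\mathcal N),L_{p_1}(\mathcal N))$. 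I would then invoke the standard principle that interpolation spaces pass to retracts: if $T$ maps $\h_{p_i}^s(\M)$ boundedly into itself for $i=0,1$, then $j_s T\pi_s$ maps $L_{p_i}(\mathcal N)$ boundedly into itself, hence maps $L_\Phi(\mathcal N)$ boundedly into itself; composing with $\pi_s$ and using $\pi_s j_s={\rm id}$ shows $T$ maps $\{x:j_s x\in L_\Phi(\mathcal N)\}$, normed by $\|j_s x\|_{L_\Phi(\mathcal N)}$, boundedly into itself, and by Step 2 this normed space is precisely $\h_\Phi^s(\M)$. Hence $\h_\Phi^s(\M)\in{\rm Int}(\h_{p_0}^s(\M),\h_{p_1}^s(\M))$. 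The only genuinely technical point in this whole plan is the construction of the column (and hence row) endpoint idempotent $Q^c$ and the check that one algebraic operator works at both $p_0$ and $p_1$; this is exactly the machinery of \cite{Ju,JX,JX3} and \cite{RW}, so in the write-up it is quoted rather than reproved, and everything else is soft interpolation theory together with the order-continuity density argument of Step 2.
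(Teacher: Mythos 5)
Your proposal is correct and is essentially the argument behind the paper's treatment of this statement: the lemma is not proved in the paper but quoted from \cite[Proposition~2.8]{RW}, whose proof is exactly your scheme --- the $p$-compatible endpoint projections of Junge/Junge--Xu (the diagonal map $(a_n)\mapsto(\E_n a_n-\E_{n-1}a_n)$ and the column projection built from the module maps $u_{n,p}$), the interpolation property $L_\Phi(\N)\in{\rm Int}(L_{p_0}(\N),L_{p_1}(\N))$ for $p_0<p_\Phi\leq q_\Phi<p_1$, and the retract principle for item (iii); indeed these are the same two projections the paper itself invokes in the proof of Proposition~\ref{h-interpolation}. The only point to write out carefully is the identification of the range of the interpolated idempotent with $\mathcal{D}_d(\h_\Phi^d(\M))$, respectively $U\mathcal{D}_c(\h_\Phi^c(\M))$ (your order-continuity/density step, which for the column case needs the routine extra remark that fixed points coming from $L_{p_0}(\N)\cap L_{p_1}(\N)$ can be approximated by $U\mathcal{D}_c(\mathcal{F}_M)$ simultaneously in both endpoint norms, hence in $L_\Phi$).
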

The next proposition is the Hardy space versions of Theorem~\ref{Bekjan-Chen}.
\begin{proposition}\label{h-interpolation}  Let $\N$ be a semifinite von Neumann algebra equipped with a  normal semifinite faithful  trace $\sigma$.  Assume that $1< p_0 <p_1 <\infty$. Let  $s \in \{d,c\}$ and $T:  \h_{p_0}^s(\M) + \h_{p_1}^s(\M) \to L_{p_0}(\N) + L_{p_1}(\N)$ be a linear operator that satisfies $T(\h_{p_i}^s(\M)) \subseteq  L_{p_i}(\N)$ for 
$i=0,1$. If $\Phi$ is an Orlicz function with $p_0<p_\Phi \leq q_\Phi <p_1$, then there exists a constant $C$ depending only on $p_0$, $p_1$, and $\Phi$ such that:
\begin{enumerate}[{\rm(i)}]
\item If $s=d$ and $x \in \h_\Phi^d(\M)$, then
$
\sigma\big[ \Phi\big(|Tx|\big)\big] \leq C \sum_{n\geq 1}  \T\big[\Phi\big(|dx_n|\big)\big]$.
\item If $s=c$ and $y \in \h_\Phi^c(\M)$, then 
$
\sigma\big[ \Phi\big( |Ty| \big)\big] \leq C \T\big[ \Phi\big(s_c(y)\big)\big]$.
\end{enumerate}
Similarly, if $S: L_{p_0}(\N) + L_{p_1}(\N) \to \h_{p_0}^s(\M) + \h_{p_1}^s(\M)$ is 
a linear operator that satisfies
 $S(L_{p_i}(\N)) \subseteq  \h_{p_i}^s(\M)$ for $i=0,1$, then there exists a constant $C$ depending only on  $p_0$, $p_1$, and $\Phi$ such that:
 \begin{enumerate}
 \item[{\rm(iii)}] If $s=d$ and $x \in L_\Phi(\N)$, then
 $
 \sum_{n\geq 1} \T\big[  \Phi\big( | d_n(Sx)| \big) \big] \leq C\sigma\big[ \Phi\big(|x|\big) \big]$ where  $(d_n(Sx))_{n\geq 1}$ denotes the martingale difference sequence of the martingale associated with $Sx$.
 \item[{\rm(iv)}] If $s=c$ and $y \in L_\Phi(\N)$, then 
 $
 \T\big[\Phi\big( s_c(Sy)\big) \big] \leq C \sigma\big[ \Phi\big(|y|\big) \big]$.
 \end{enumerate}
\end{proposition}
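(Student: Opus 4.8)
The plan is to derive all four assertions from the noncommutative Marcinkiewicz-type theorem \cite[Theorem~2.1]{Bekjan-Chen} (stated above as Theorem~\ref{Bekjan-Chen}), after transporting the operators $S$ and $T$ to genuine noncommutative $L_p$-spaces by means of the isometric embeddings $U\mathcal{D}_c:\h_p^c(\M)\to L_p(\M\overline{\otimes}B(\ell_2(\mathbb{N}^2)))$ and $\mathcal{D}_d:\h_p^d(\M)\to L_p(\M\overline{\otimes}\ell_\infty)$ recalled in Section~2. Two elementary facts will be used throughout: these embeddings, being extensions of the same maps defined on $\mathcal{F}_M$, are mutually compatible for the various values of $p$ and for $L_\Phi$; and for a martingale difference sequence $(a_n)_n$ one has $(\T\otimes\Tr)[\Phi(|(a_n)_n|)]=\sum_{n\geq1}\T[\Phi(|a_n|)]$ (the trace on $\M\overline{\otimes}\ell_\infty$ being $\T$ tensored with the counting measure), while $(\T\otimes\Tr)[\Phi(|U\mathcal{D}_c(z)|)]=\T[\Phi(s_c(z))]$ by the convention \eqref{convension-moment}.

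Parts (iii) and (iv) require no extension of $S$. For (iv), I would set $R:=(U\mathcal{D}_c)\circ S$; since $S(L_{p_i}(\N))\subseteq\h_{p_i}^c(\M)$ and $U\mathcal{D}_c$ is isometric on $\h_{p_i}^c(\M)$, $R$ maps $L_{p_i}(\N)$ boundedly into $L_{p_i}(\M\overline{\otimes}B(\ell_2(\mathbb{N}^2)))$. Theorem~\ref{Bekjan-Chen}, applied with $\M_1=\N$ and $\M_2=\M\overline{\otimes}B(\ell_2(\mathbb{N}^2))$, then gives $(\T\otimes\Tr)[\Phi(|Ry|)]\leq C\,\sigma[\Phi(|y|)]$ for every $y\in L_\Phi(\N)$; since $Ry=U\mathcal{D}_c(Sy)$, the left-hand side equals $\T[\Phi(s_c(Sy))]$, which is (iv) (and this a posteriori shows $Sy\in\h_\Phi^c(\M)$). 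Part (iii) is identical with $\mathcal{D}_d$ and $\M\overline{\otimes}\ell_\infty$ in place of $U\mathcal{D}_c$ and $\M\overline{\otimes}B(\ell_2(\mathbb{N}^2))$, the second identity above converting the left-hand side into $\sum_{n\geq1}\T[\Phi(|d_n(Sx)|)]$.

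Parts (i) and (ii) instead require extending $T$, which is defined only on the Hardy space, to the whole ambient $L_p$-space, and this is where a projection enters. For (ii), Lemma~\ref{comp-Int}(ii) — applied both to the given $\Phi$ and to the power functions $t^{p_0}$ and $t^{p_1}$ — provides a bounded projection $Q$ of $L_p(\M\overline{\otimes}B(\ell_2(\mathbb{N}^2)))$ onto the range of $U\mathcal{D}_c$; being given by one formula built from the $\E_n$'s and the module maps $u_{n,p}$, the projection $Q$ is compatible for $p_0$, $p_1$ and $\Phi$. Then $\widehat{T}:=T\circ(U\mathcal{D}_c)^{-1}\circ Q$ is a linear operator on $L_{p_0}(\M\overline{\otimes}B(\ell_2(\mathbb{N}^2)))+L_{p_1}(\M\overline{\otimes}B(\ell_2(\mathbb{N}^2)))$ mapping each $L_{p_i}$ boundedly into $L_{p_i}(\N)$, so Theorem~\ref{Bekjan-Chen} yields $\sigma[\Phi(|\widehat{T}\xi|)]\leq C\,(\T\otimes\Tr)[\Phi(|\xi|)]$ for $\xi\in L_\Phi(\M\overline{\otimes}B(\ell_2(\mathbb{N}^2)))$. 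Taking $\xi=U\mathcal{D}_c(y)$ with $y\in\h_\Phi^c(\M)$ one has $Q\xi=\xi$, hence $\widehat{T}\xi=Ty$, and the right-hand side is $\T[\Phi(s_c(y))]$; this is (ii). Part (i) follows the same pattern, with $\mathcal{D}_d$ in place of $U\mathcal{D}_c$; here the needed projection onto the martingale-difference subspace of $L_p(\M\overline{\otimes}\ell_\infty)$ is a suitable combination of the contractions $\E_n$ on $L_p(\M)$, hence bounded on $L_p(\M\overline{\otimes}\ell_\infty)$ for every $1\leq p\leq\infty$ with a uniform constant, so that the complementation is elementary and Lemma~\ref{comp-Int}(i) is not even invoked.

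The single genuinely nontrivial input is therefore the complementation used in part (ii): that the conditioned column Hardy space $\h_p^c(\M)$ is complemented in its ambient $L_p$-space over the \emph{whole} range $1<p<\infty$ — in particular at the endpoints $p_0$ and $p_1$ — by mutually compatible projections. This is exactly Lemma~\ref{comp-Int}(ii), which rests on the Junge--Xu analysis of conditioned Hardy spaces. The remaining points are routine bookkeeping: that $L_\Phi$ embeds continuously in $L_{p_0}+L_{p_1}$ (since $p_0<p_\Phi\leq q_\Phi<p_1$), so that all compositions above are defined; that $U\mathcal{D}_c(y)$ is unambiguous for $y$ lying in an intersection of the relevant spaces; and that each composed map is a single well-defined operator on the sum $L_{p_0}+L_{p_1}$ — all immediate from the constructions of Section~2, with the resulting constants depending only on $p_0$, $p_1$ and $\Phi$.
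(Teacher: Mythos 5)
Your proposal is correct and follows essentially the same route as the paper: for (i) and (ii) you extend $T$ to the ambient $L_p$-space via a projection (the explicit conditional-expectation projection for the diagonal case, the complementation of $\h_p^c(\M)$ from Lemma~\ref{comp-Int} for the column case) and then apply Theorem~\ref{Bekjan-Chen}, while for (iii) and (iv) you compose $S$ with the isometric embeddings $\mathcal{D}_d$ and $U\mathcal{D}_c$ before interpolating, exactly as in the paper. The only difference is cosmetic: the paper phrases the column projection as $\varPi$ onto $\h_{p_0}^c(\M)+\h_{p_1}^c(\M)$ rather than your $(U\mathcal{D}_c)^{-1}\circ Q$, which is the same map.
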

\begin{proof}
We begin with the diagonal part. Let
 $\Theta: L_{p_0}(\M \overline{\otimes}\ell_\infty) + L_{p_1}(\M \overline{\otimes}\ell_\infty)  \to \h_{p_0}^d(\M) + \h_{p_1}^d(\M)$ be the bounded projection  defined by:
$\Theta\big((a_n)_{n\geq 1}\big) = \sum_{n\geq 1} \E_{n}(a_n) -\E_{n-1}(a_n)$.
 It is clear  that   $T\Theta[L_{p_i}(\M \overline{\otimes}\ell_\infty) ] \subset L_{p_i}(\N)$ for $i=0,1$.  It follows from  Theorem~\ref{Bekjan-Chen} that   $T\Theta[L_\Phi(\M \overline{\otimes}\ell_\infty)] \subset L_\Phi(\M)$ and there exists a constant $C=C(p_0,p_1,\Phi)$ such that:
 \[
 \sigma\big[ \Phi\big( | T\Theta((a_n)_n)| \big)\big] \leq C \T \otimes \gamma\big[\Phi\big(| (a_n)_n|\big)\big]
 \]
where $\T \otimes \gamma$ is the natural trace of  $\M \overline{\otimes} \ell_\infty$.
Let $x \in \h_\Phi^d(\M)$. 
When applied to the operator $\mathcal{D}_d(x)\in L_\Phi(\M \overline{\otimes} \ell_\infty)$, the  above inequality yields the desired inequality.

Now, we verify the column case.  Let $\mathcal{S}=\M \overline{\otimes} B(\ell_2(\mathbb{N}^2))$ equipped with its natural trace $\T \otimes \Tr$. Define $\varPi: L_{p_0}(\mathcal{S}) + L_{p_1}(\mathcal{S}) \to \h_{p_0}^c(\M) +\h_{p_1}^c(\M)$ be the projection guaranteed by Lemma~\ref{comp-Int}. Then  we have, $T\varPi[L_{p_i}(\mathcal{S}) ] \subset L_{p_i}(\N)$ for $i=0,1$.
  As above,  we deduce from Theorem~\ref{Bekjan-Chen}  that  $T\varPi[L_\Phi(\mathcal{S})] \subset L_\Phi(\N)$ and there exists a constant $C=C_{ p_0,p_1,\Phi}$ such that for every $a \in L_\Phi(\mathcal{S})$,
 \[
 \sigma\big[ \Phi\big( | T\varPi(a)| \big)\big] \leq C \T \otimes \Tr\big[\Phi\big(| a|\big)\big].
 \]
Let  $y \in \h_\Phi^c(\M)$ and take $a=U\mathcal{D}_c(y)$. For this special case,   the preceding inequality clearly  translates into the inequality in item~{\rm{(ii)}}.

Items {\rm{(iii)}} and {\rm(iv)} follow from composing $S$ with the isometric embeddings   $\mathcal{D}_d: \h_{p_i}^d(\M) \to L_{p_i}(\M \overline{\otimes} \ell_\infty)$ and  $U\mathcal{D}_c: \h_{p_i}^c(\M) \to L_{p_i}(\mathcal{S})$ for $i=0,1$.
\end{proof}

We now turn our attention to specific types of interpolations.
A fundamental notion for real interpolation theory is the $K$-functional. This is given by setting:
\[
K(t,x)=K(t, x;\overline{X})=\inf\Big\{ \big\|x_0\big\|_{X_0} + t\big\|x_1\big\|_{X_1} : x=x_0 +x_1\Big\}, \ \ x \in \Sigma(\overline{X}).
\]
We will also need  a dual notion known as the $J$-functional defined by
\[
J(t,x)=J(t, x;\overline{X})=\max\Big\{ \big\|x\big\|_{X_0},  t\big\|x\big\|_{X_1} \Big\}, \ \ x \in \Delta(\overline{X}).
\]
These two notions will be heavily used in the sequel. 

We recall that by a representation of  $x \in \Sigma(\overline{X})$ with respect to the couple $\overline{X}$,  we mean  a measurable function $u: (0,\infty) \to \Delta(\overline{X})$ satisfying
\[
x=\int_{0}^\infty u(t)\ \frac{dt}{t}
\]
where the convergence of the integral is taken  in $\Sigma(\overline{X})$. Similarly, a discrete representation of $x$  with respect to the couple $\overline{X}$ is a series 
\[
x=\sum_{\nu \in \mathbb{Z}} u_\nu
\]
 with 
$u_\nu \in \Delta(\overline{X})$  for all $\nu \in \mathbb{Z}$  and the convergence of the series taken in the Banach space  $\Sigma(\overline{X})$.

 \begin{definition}  Given a compatible  couple $\overline{X}$ and $0\leq \theta \leq 1$, we say that an intermediate space $Z$  of $\overline{X}$ belongs to 
 
 (i) the class $\mathcal{C}_K(\theta, \overline{X})$  if  there exists a constant $C_1$ such that  for every $x \in Z$ and $t>0$, the following holds:
 \[
 K(t, x) \leq C_1 t^\theta \|x\|_Z.
\]

(ii)  the class $\mathcal{C}_J(\theta, \overline{X})$  if  there exists a constant $C_2$ such that  for every $x \in \Delta(\overline{X})$  and $t>0$, the following holds:
 \[
 \|x\|_Z \leq C_2 t^{-\theta}J(t, x). 
\]
 \end{definition}

Examples of spaces belonging to the class $\cal{C}_K(\theta, \overline{X})$ are those real interpolation spaces constructed using the $K$-method. Namely, the spaces
$(X_0, X_1)_{\theta,p, K}$ (we refer to \cite{BL} for the definition of $\|\cdot\|_{\theta,p, K}$). The corresponding  statement  is also valid for the class $\cal{C}_J(\theta, \overline{X})$. That is, $(X_0, X_1)_{\theta, p,J}$ belongs to $\cal{C}_J(\theta, \overline{X})$. In particular, for $\theta=1-p^{-1}$, $L_p$  belongs to both $\cal{C}_K(\theta)$ and $\cal{C}_J(\theta)$ for the couple $(L_1, L_\infty)$. A noncommutative analogue of the latter statement will be used in the sequel.
 
The next two propositions deal with reiteration type inequalities involving convex functions. 
 
\begin{proposition}\label{K-iteration} Let  $\overline{X}=(X_0,X_1)$ and $\overline{Y}=(Y_0, Y_1)$  be compatible couples  of Banach spaces and $0\leq \theta_0 <\theta_1\leq 1$. Assume that
$Y_i$ belongs to the class 
$\mathcal{C}_K(\theta_i, \overline{X})$ for $i=0,1$.   Then  the following inequality holds:
\[
\int_0^\infty  \Phi\big[ t^{-1} K(t, y; \overline{X}) \big]\ dt \lesssim_{\Phi, \theta_0,\theta_1}
\int_0^\infty \Phi\big[ t^{-1 +\theta_0} K( t^{\theta_1-\theta_0}, y ; \overline{Y}) \big]\ dt,
\quad y \in \Sigma(\overline{Y}).
\]
\end{proposition}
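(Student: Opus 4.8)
The plan is to deduce the integral inequality from a pointwise comparison of the two $K$-functionals, after which monotonicity of $\Phi$, the $\Delta_2$-condition, and plain integration finish the argument.

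First I would record the reiteration estimate
\[
K(t, y; \overline{X}) \leq C\, t^{\theta_0}\, K\big(t^{\theta_1 - \theta_0}, y; \overline{Y}\big), \qquad y \in \Sigma(\overline{Y}),\ t > 0,
\]
where $C$ depends only on the constants attached to the hypotheses $Y_i \in \mathcal{C}_K(\theta_i, \overline{X})$. This is the easy half of the classical reiteration theorem and its proof is short: for any decomposition $y = y_0 + y_1$ with $y_i \in Y_i \subseteq \Sigma(\overline{X})$, subadditivity of $K(t,\cdot;\overline{X})$ together with the defining inequalities of the classes $\mathcal{C}_K(\theta_i,\overline{X})$ gives
\[
K(t, y; \overline{X}) \leq C_0 t^{\theta_0}\|y_0\|_{Y_0} + C_1 t^{\theta_1}\|y_1\|_{Y_1} = t^{\theta_0}\big(C_0\|y_0\|_{Y_0} + C_1 t^{\theta_1 - \theta_0}\|y_1\|_{Y_1}\big),
\]
and taking the infimum over all such decompositions yields the claim with $C = \max(C_0, C_1)$.

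Dividing by $t$ gives $t^{-1} K(t, y; \overline{X}) \leq C\, t^{-1+\theta_0} K(t^{\theta_1-\theta_0}, y; \overline{Y})$ for every $t>0$. Applying $\Phi$, which is nondecreasing, and invoking the $\Delta_2$-condition — which provides $C_\Phi'$, depending only on $C$ and $\Phi$, with $\Phi(Cs) \leq C_\Phi'\, \Phi(s)$ for all $s \geq 0$ (and one may take $C_\Phi' = 1$ when $C \leq 1$) — one obtains
\[
\Phi\big[t^{-1} K(t, y; \overline{X})\big] \leq C_\Phi'\, \Phi\big[t^{-1+\theta_0} K(t^{\theta_1 - \theta_0}, y; \overline{Y})\big], \qquad t > 0.
\]
Integrating this over $(0,\infty)$ produces the asserted inequality. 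The argument is essentially mechanical, so I anticipate no genuine obstacle; the only point requiring care is precisely this use of the $\Delta_2$-condition to absorb the multiplicative constant inside $\Phi$ (the standard reason the paper restricts to $\Delta_2$ Orlicz functions), together with the bookkeeping that the final implied constant depends on $\Phi$, $\theta_0$, $\theta_1$, and the two class-membership constants, the latter being fixed once the couples $\overline{X}$ and $\overline{Y}$ are fixed.
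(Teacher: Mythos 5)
Your proposal is correct and follows essentially the same route as the paper: the pointwise reiteration bound $K(t,y;\overline{X}) \leq C\, t^{\theta_0} K(t^{\theta_1-\theta_0},y;\overline{Y})$ obtained by decomposing $y=y_0+y_1$ and taking the infimum, followed by monotonicity of $\Phi$, the $\Delta_2$-condition to absorb the constant, and integration. No issues.
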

\begin{proof} From the assumptions,
there exist constants $C_0$ and $C_1$  such that if $y= y_0 + y_1  \in \Sigma(\overline{Y})$ then 
for every $t>0$,
\[
K(t, y_0; \overline{X}) \leq  C_0 t^{\theta_0} \|y_0\|_{Y_0} \ \text{and}\ 
K(t, y_1; \overline{X})  \leq  C_1 t^{\theta_1} \|y_1\|_{Y_1}.
\] 
It follows that  
$
K(t,y; \overline{X}) \leq C_0 t^{\theta_0} \|y_0\|_{Y_0}  + C_1 t^{\theta_1} \|y_1\|_{Y_1}$.
Taking the  infimum over  all such decompositions of $y$, we have for $C=\max\{C_0, C_1\}$ that 
\[
K(t,y; \overline{X} ) \leq C t^{\theta_0} K(t^{\theta_1-\theta_0}, y; \overline{Y}).
\]
Since $\Phi$ is increasing and satisfies the $\Delta_2$-condition, we may conclude that
\begin{align*}
\int_0^\infty \Phi\big[ t^{-1}K(t,y; \overline{X}) \big]  \ dt &\leq  
  \int_0^\infty \Phi\big[Ct^{-1 +\theta_0} K(t^{\theta_1-\theta_0}, y; \overline{Y})\big] \  dt \\
  &\lesssim
 \int_0^\infty \Phi\big[t^{-1 +\theta_0} K(t^{\theta_1-\theta_0}, y; \overline{Y})\big] \  dt.  
\end{align*}
The fact that the constant depends only on $\Phi$, $\theta_0$, and $\theta_1$ is clear from  the argument.
\end{proof}
A dual version of the preceding  proposition reads as follows:
\begin{proposition}\label{J-iteration} Let  $\overline{X}=(X_0,X_1)$ and $\overline{Y}=(Y_0, Y_1)$  be  compatible couples  of Banach spaces and $0\leq \theta_0 <\theta_1\leq 1$. Assume that
$Y_i$ belongs to the class 
$\mathcal{C}_J(\theta_i, \overline{X})$ for $i=0,1$.  Let $y \in \Delta(\overline{Y})$  and 
 $u(\cdot)$ be a representation of $y$  for the  couple $\overline{X}$.  If  
 $u(\cdot)$ is  also a representation of $x$ for the  couple $\overline{Y}$ then  the following inequality holds:
\[
\int_0^\infty \Phi\big[ t^{-1 +\theta_0} J(t^{\theta_1-\theta_0}, u(t); \overline{Y}) \big] \ dt \lesssim_{\Phi, \theta_0,\theta_1}
\int_0^\infty \Phi\big[ t^{-1}J(t, u(t); \overline{X})\big]\ dt.
\]
\end{proposition}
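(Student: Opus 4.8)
The plan is to dualize the proof of Proposition~\ref{K-iteration}: first establish a pointwise comparison between the $J$-functionals for $\overline{Y}$ and for $\overline{X}$ along the scaling $t\mapsto t^{\theta_1-\theta_0}$, and then apply the monotonicity of $\Phi$, the $\Delta_2$-condition, and integrate. To begin, I would unwind the hypotheses. Since $Y_i$ belongs to $\mathcal{C}_J(\theta_i,\overline{X})$ for $i=0,1$, there are constants $C_0$ and $C_1$ such that for every $z\in\Delta(\overline{X})$ and every $r>0$,
\[
\|z\|_{Y_0}\le C_0\, r^{-\theta_0}\, J(r,z;\overline{X}),\qquad \|z\|_{Y_1}\le C_1\, r^{-\theta_1}\, J(r,z;\overline{X}).
\]
Applied with $z=u(t)$ (which lies in $\Delta(\overline{X})$ for a.e.\ $t$, being a value of a representation for $\overline{X}$; it also lies in $\Delta(\overline{Y})$, since $u(\cdot)$ is a representation of $y$ for $\overline{Y}$, so both $J$-functionals below are meaningful), these will be the only inputs needed.

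Next, for a fixed $z\in\Delta(\overline{X})$ and $r>0$, I would bound $J(r^{\theta_1-\theta_0},z;\overline{Y})=\max\{\|z\|_{Y_0},\,r^{\theta_1-\theta_0}\|z\|_{Y_1}\}$ by feeding the two displayed estimates into the two terms. The first term is at most $C_0\, r^{-\theta_0}\, J(r,z;\overline{X})$; the second is at most $C_1\, r^{\theta_1-\theta_0}\, r^{-\theta_1}\, J(r,z;\overline{X})=C_1\, r^{-\theta_0}\, J(r,z;\overline{X})$, the powers of $r$ matching precisely because the first slot of the $\overline{Y}$-functional was evaluated at $r^{\theta_1-\theta_0}$. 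Hence, with $C=\max\{C_0,C_1\}$,
\[
J(r^{\theta_1-\theta_0},z;\overline{Y})\le C\, r^{-\theta_0}\, J(r,z;\overline{X}),\qquad z\in\Delta(\overline{X}),\ r>0.
\]

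Finally I would specialize $z=u(t)$ and $r=t$, and multiply through by $t^{-1+\theta_0}$, which gives the pointwise inequality
\[
t^{-1+\theta_0}\, J(t^{\theta_1-\theta_0},u(t);\overline{Y})\le C\, t^{-1}\, J(t,u(t);\overline{X})
\]
for a.e.\ $t>0$. Applying the increasing function $\Phi$, then using the $\Delta_2$-condition to absorb the constant $C$ (so that $\Phi(Cs)\le C'\,\Phi(s)$ with $C'$ depending only on $\Phi$ and $C$), and integrating in $t$ over $(0,\infty)$ yields the asserted inequality. As in Proposition~\ref{K-iteration}, tracking the constants shows the implied constant depends only on $\Phi$, $\theta_0$, $\theta_1$ (and the constants $C_0,C_1$ from the class hypotheses).

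I do not expect a serious obstacle here: the statement is a routine dualization of Proposition~\ref{K-iteration}. The only points requiring care are the exponent bookkeeping in the second paragraph—choosing the evaluation point $r=t$ together with the first $J$-slot $r^{\theta_1-\theta_0}$ is exactly what collapses the two $Y_i$-estimates into a single power $r^{-\theta_0}$—and verifying that $u(t)\in\Delta(\overline{X})\cap\Delta(\overline{Y})$ for a.e.\ $t$ so that both $J$-functionals and the class estimates are legitimately available.
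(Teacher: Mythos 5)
Your proposal is correct and follows essentially the same route as the paper: apply the $\mathcal{C}_J(\theta_i,\overline{X})$ estimates to $z=u(t)$ at $r=t$, combine them into the pointwise bound $t^{-1+\theta_0}J(t^{\theta_1-\theta_0},u(t);\overline{Y})\le C\,t^{-1}J(t,u(t);\overline{X})$, then apply $\Phi$, absorb $C$ via the $\Delta_2$-condition, and integrate. The exponent bookkeeping and the observation that the class estimates themselves force $u(t)\in\Delta(\overline{Y})$ match the paper's argument exactly.
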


\begin{proof} The argument is nearly identical to the one used earlier. We include the details for completeness.  For the inequality,  we have from the assumptions that there exist constants $C_0$ and $C_1$ such that for every $t>0$,
\[
\|u(t)\|_{Y_0} \leq  C_0 t^{-\theta_0} J(t, u(t) ; \overline{X})
\
\text{and}\ 
\|u(t)\|_{Y_1} \leq  C_1 t^{-\theta_1} J(t, u(t) ; \overline{X}).
\]
The latter is equivalent to  the inequality
\[
t^{\theta_1-\theta_0}\|u(t)\|_{Y_1} \leq  C_1 t^{-\theta_0} J(t, u(t) ; \overline{X}).
\]
This implies that for $C=\max\{C_0, C_1\}$,  we have 
$
J(t^{\theta_1 -\theta_2}, u(t), \overline{Y}) \leq  C  t^{-\theta_0}J(t, u(t); \overline{X})$.
That is, 
\[
t^{-1+\theta_0}J(t^{\theta_1 -\theta_0}, u(t) ; \overline{Y}) \leq  C  t^{-1}J(t, u(t); \overline{X}).
\]
Since $\Phi$ satisfies the $\Delta_2$-condition,  we conclude as before that
\begin{align*}
\int_{0}^\infty  \Phi\big[ t^{-1+\theta_0}J(t^{\theta_1 -\theta_0}, u(t); \overline{Y})\big]\ dt &\leq \int_0^\infty \Phi\big[  C  t^{-1}J(t, u(t); \overline{X}) \big]\ dt\\
&\lesssim \int_0^\infty \Phi\big[    t^{-1}J(t, u(t); \overline{X}) \big]\ dt.
\end{align*}
As noted in the previous proposition, the constant involved depends only on $\Phi$, $\theta_0$, and $\theta_1$.
\end{proof}

\medskip

In preparation for the next proposition,  let us review some basic facts  about the   following  classical operators.
For $f \in L_0(0,\infty)$, we  define the  \emph{Calder\'on's operators} by setting for  $1\leq p<q<\infty$,
\[
S_{p,q}f(t) =t^{-\frac{1}{p}} \int_{0}^t  s^{\frac{1}{p}}f(s)\  \frac{ds}{s} +
t^{-\frac{1}{q}} \int_{t}^\infty s^{\frac{1}{q}}f(s)\  \frac{ds}{s}, \quad t>0
\]
and  for $1\leq p<\infty$,
\[
S_{p,\infty} f(t)=t^{-\frac{1}{p}} \int_{0}^t  s^{\frac{1}{p}}f(s)\  \frac{ds}{s} \quad  t>0.
\]

Connections between Calder\'on operators and interpolation theory are well-established in the literature.  It was noted in \cite[Proposition~5.5]{BENSHA} that 
for $1\leq p< q\leq \infty$, the linear operator $S_{p,q}$  is simultaneously of weak-types $(p,p)$ and $(q,q)$. Thus,  by standard use of Marcinkiewicz  interpolation, we have the following well-known properties:
\begin{lemma}\label{calderon} 
\begin{itemize}
\item[(i)]  For every $1\leq p< r <q$, $S_{p, q}$ is a  bounded  linear operator on $L_r(0,\infty)$;
\item[(ii)] for $1\leq p<r \leq \infty$, $S_{p,\infty}$ is  a bounded   linear operator on $L_r(0,\infty)$.
\end{itemize}
\end{lemma}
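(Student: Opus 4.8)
The plan is to read off both statements from the Marcinkiewicz interpolation theorem, using the endpoint bounds for the Calder\'on operators that were just recalled. First I would invoke \cite[Proposition~5.5]{BENSHA}: for $1\le p<q\le\infty$ the linear (and positivity-preserving) operator $S_{p,q}$ is simultaneously of weak type $(p,p)$ and weak type $(q,q)$, where in the case $q=\infty$ the ``weak type $(\infty,\infty)$'' endpoint is simply the assertion that $S_{p,\infty}$ maps $L_\infty(0,\infty)$ boundedly into itself. For the convenience of the reader I would verify this last point by a one-line computation: for $f\in L_\infty(0,\infty)$,
\[
|S_{p,\infty}f(t)|\le\|f\|_\infty\,t^{-1/p}\int_0^t s^{1/p}\,\frac{ds}{s}=\|f\|_\infty\,t^{-1/p}\,p\,t^{1/p}=p\,\|f\|_\infty,
\]
so $\|S_{p,\infty}\colon L_\infty\to L_\infty\|\le p$.

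Statement (i) then follows immediately: given $1\le p<r<q$, one applies the Marcinkiewicz interpolation theorem to $S_{p,q}$, which is of weak types $(p,p)$ and $(q,q)$ with $p<q$; this yields that $S_{p,q}$ is bounded on $L_r(0,\infty)$ with operator norm bounded by a constant depending only on $p$, $q$, and $r$. For statement (ii) I would distinguish two cases. If $1\le p<r<\infty$, then $S_{p,\infty}$ is of weak type $(p,p)$ and (strong, hence weak) type $(\infty,\infty)$, so Marcinkiewicz interpolation again gives boundedness on $L_r(0,\infty)$. If $r=\infty$, the desired boundedness is precisely the elementary estimate displayed above. Together these two cases cover all $1\le p<r\le\infty$, completing the proof.

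There is essentially no serious obstacle here: the whole lemma is a bookkeeping exercise built around one application of Marcinkiewicz interpolation. The only points requiring a little attention are (a) interpreting the $q=\infty$ endpoint as an $L_\infty\to L_\infty$ bound rather than a genuine weak-type bound, and (b) treating $r=\infty$ in part (ii) separately, since interpolation only delivers the range $r<\infty$. Alternatively, one could bypass interpolation altogether and prove each of the two summands defining $S_{p,q}$ bounded on the relevant $L_r$ directly from the weighted Hardy inequality and its dual, but the interpolation route is the shorter one and matches the setup above.
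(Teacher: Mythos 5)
Your proof is correct and follows essentially the same route as the paper, which also derives the lemma from the weak-type $(p,p)$ and $(q,q)$ bounds of \cite[Proposition~5.5]{BENSHA} together with Marcinkiewicz interpolation. Your explicit $L_\infty\to L_\infty$ estimate and the separate treatment of $r=\infty$ in (ii) are minor (and accurate) elaborations of details the paper leaves implicit.
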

As  immediate consequences,  we also have the following $\Phi$-moment versions: 
\begin{lemma}\label{Phi} If $1 \leq  p < p_\Phi  <q_\Phi < q< \infty$, then for every $f\in L_\Phi(0,\infty)$,
\[
\int_0^\infty  \Phi\big[ |S_{p,\infty} f(t)| \big]\ dt \lesssim_{\Phi, p} \int_0^\infty \Phi[ |f(t)|] \ dt
\]
and
\[
\int_0^\infty  \Phi\big[|S_{p,q} f(t)| \big]\ dt \lesssim_{\Phi,p,q} \int_0^\infty \Phi[|f(t)|] \ dt.
\]
\end{lemma}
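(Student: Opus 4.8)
The plan is to deduce both estimates directly from the noncommutative Marcinkiewicz interpolation theorem (Theorem~\ref{Bekjan-Chen}) applied in the purely commutative situation $\M_1=\M_2=L_\infty(0,\infty)$, with $\T_1=\T_2$ given by integration against Lebesgue measure, combined with the classical $L_r$-boundedness of the Calder\'on operators recorded in Lemma~\ref{calderon}. In that commutative setting one has $\T_i\big[\Phi(|g|)\big]=\int_0^\infty\Phi(|g(t)|)\,dt$, and since $q_\Phi<\infty$ forces the $\Delta_2$-condition one also has $L_\Phi(\M_1)=L_\Phi(0,\infty)$; thus the conclusion of Theorem~\ref{Bekjan-Chen} is literally the inequality we want.

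Consider first the second estimate, involving $S_{p,q}$. Since $1\le p<p_\Phi$ and $q_\Phi<q<\infty$, I fix exponents $p_0,p_1$ with
\[
p<p_0<p_\Phi\le q_\Phi<p_1<q.
\]
By Lemma~\ref{calderon}(i), $S_{p,q}$ is a bounded linear operator on $L_{p_0}(0,\infty)$ and on $L_{p_1}(0,\infty)$; in particular it is well defined on the sum $L_{p_0}(0,\infty)+L_{p_1}(0,\infty)$ and satisfies $S_{p,q}\big(L_{p_i}(0,\infty)\big)\subseteq L_{p_i}(0,\infty)$ for $i=0,1$. Because $p_0<p_\Phi\le q_\Phi<p_1$, Theorem~\ref{Bekjan-Chen} applies to $T=S_{p,q}$ and produces a constant $C$ depending only on $p_0$, $p_1$, and $\Phi$ — hence, after fixing $p_0,p_1$, only on $p$, $q$, and $\Phi$ — such that $\int_0^\infty\Phi\big(|S_{p,q}f(t)|\big)\,dt\le C\int_0^\infty\Phi\big(|f(t)|\big)\,dt$ for every $f\in L_\Phi(0,\infty)$. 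Here one uses the standard inclusion $L_\Phi(0,\infty)\subseteq L_{p_0}(0,\infty)+L_{p_1}(0,\infty)$ (valid since $p_0<p_\Phi\le q_\Phi<p_1$, as already noted in the text), which places $f$ in the domain of $S_{p,q}$.

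The first estimate is handled identically. Since $1\le p<p_\Phi$ and $q_\Phi<\infty$, I fix $p_0,p_1$ with $p<p_0<p_\Phi\le q_\Phi<p_1\le\infty$; by Lemma~\ref{calderon}(ii), $S_{p,\infty}$ is bounded on $L_{p_0}(0,\infty)$ and on $L_{p_1}(0,\infty)$, maps each into itself, and is defined on their sum, so Theorem~\ref{Bekjan-Chen} applied to $T=S_{p,\infty}$ yields the desired bound with a constant depending only on $p$ and $\Phi$.

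There is no genuine obstacle here; the only points requiring attention are bookkeeping. First, one must be able to realize the strict chain $p<p_0<p_\Phi\le q_\Phi<p_1<q$ (respectively with $p_1\le\infty$), which is exactly what the hypotheses $p<p_\Phi$ and $q_\Phi<q$ (resp. $q_\Phi<\infty$) guarantee. Second, one must know that $S_{p,q}$ (resp. $S_{p,\infty}$) is bounded into each of the two endpoint spaces, which is precisely the content of Lemma~\ref{calderon}. Everything else is an immediate transcription of Theorem~\ref{Bekjan-Chen} to the commutative von Neumann algebra $L_\infty(0,\infty)$.
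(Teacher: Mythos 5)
Your proposal is correct and follows essentially the same route as the paper: the paper also picks intermediate exponents strictly between $p$, $p_\Phi$, $q_\Phi$, $q$ (respectively $\infty$), invokes Lemma~\ref{calderon} for simultaneous boundedness of $S_{p,\infty}$ and $S_{p,q}$ on the two $L_r$-spaces, and then applies Theorem~\ref{Bekjan-Chen} to the abelian von Neumann algebra $L_\infty(0,\infty)$. Your extra bookkeeping (the inclusion $L_\Phi\subseteq L_{p_0}+L_{p_1}$ and the identification of $\Phi$-moments with the integral) is just making explicit what the paper leaves implicit.
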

\begin{proof} 
From Lemma~\ref{calderon},   both $S_{p,\infty}$ and $S_{p,q}$  are  bounded  simultaneously on   $L_{r_1}(0,\infty)$ and $L_{r_2}(0,\infty)$ whenever  $p<r_1 <p_\Phi \leq q_\Phi < r_2<q$.  The two inequalities  as stated follow immediately from applying Theorem~\ref{Bekjan-Chen} to the abelian von Neumann algebra $L_\infty(0,\infty)$.
\end{proof}
The next result is a   weighted version of the previous lemma. We only consider the special case that we will use.
\begin{lemma}\label{Phi2} Let $1<p<p_\Phi \leq q_\Phi <q< \infty$.
If $g$ is a  nonnegative  decreasing function defined in $(0,\infty)$ with $t \mapsto t^{-1/p}g(t^{1/p-1/q})$ belongs to $L_\Phi(0,\infty)$, then 
\[
\int_{0}^\infty \Phi\big[ t^{-1/q} S_{1,\infty}g(t^{1/p-1/q})\big]\ dt \simeq_{\Phi,p,q}  
\int_{0}^\infty \Phi\big[ t^{-1/q} g(t^{1/p-1/q})\big]\ dt.
\]
\end{lemma}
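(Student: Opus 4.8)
The plan is to reduce the claimed two‑sided estimate to Lemma~\ref{Phi} by a change of variables that absorbs the weights into the integrand. Write $\a=1/p-1/q>0$ and substitute $t=r^{1/\a}$, equivalently $r=t^\a$, so that $dt=\a^{-1}r^{1/\a-1}\,dr$ and $t^{1/p-1/q}=r$. Under this substitution both sides of the desired equivalence become, up to the harmless constant $\a^{-1}$ and the $\Delta_2$‑condition, integrals in $r$ of $\Phi$ applied to a power of $r$ times (respectively) $S_{1,\infty}g(r)$ and $g(r)$. More precisely, one checks that $t^{-1/q}=r^{-1/(q\a)}$ and $r^{1/\a-1}=r^{(1-\a)/\a}$, so the exponent of $r$ multiplying the measure and the weight combine to a single power $r^{\b}$ for an explicit $\b=\b(p,q)$; call the resulting weight $w(r)=r^{\b}$. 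Thus it suffices to prove
\[
\int_0^\infty \Phi\big[w(r)\,S_{1,\infty}g(r)\big]\,dr \simeq_{\Phi,p,q} \int_0^\infty \Phi\big[w(r)\,g(r)\big]\,dr,
\]
and the hypothesis $t\mapsto t^{-1/p}g(t^{1/p-1/q})\in L_\Phi(0,\infty)$ becomes exactly the statement that $r\mapsto w(r)g(r)$ lies in $L_\Phi(0,\infty)$ (one verifies the exponents match).

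Next I would remove the weight $w$ by incorporating it into the function. The operator $S_{1,\infty}$ has the scaling/commutation property that conjugating it by multiplication by a power weight $r^{\b}$ turns it into another Calderón‑type operator $S_{\tilde p,\infty}$ with a shifted lower index: concretely, for $h(r)=w(r)g(r)=r^{\b}g(r)$ one has
\[
w(r)\,S_{1,\infty}g(r)=w(r)\int_0^r g(s)\,\frac{ds}{s}=r^{\b}\int_0^r s^{-\b}h(s)\,\frac{ds}{s},
\]
which is precisely $S_{\tilde p,\infty}h(r)$ for the value $\tilde p$ determined by $1/\tilde p=1-\b$ (i.e. the power inside the integral of $S_{\tilde p,\infty}$ is $s^{1/\tilde p}$ with $1/\tilde p=-\b$ — the bookkeeping here must be done carefully, and this is where the precise value of $\b$ from the first step gets used). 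The upper bound $\int\Phi[S_{\tilde p,\infty}h]\lesssim\int\Phi[h]$ is then exactly Lemma~\ref{Phi}, provided $1<\tilde p<p_\Phi$, which should follow from $p<p_\Phi$ after tracking how the indices transform (this is the point to double‑check). The lower bound $\int\Phi[h]\lesssim\int\Phi[S_{\tilde p,\infty}h]$ is immediate because $g$ is \emph{decreasing}: for decreasing $g$ one has $S_{1,\infty}g(r)=\int_0^r g(s)\,ds/s\ge g(r)\int_0^r ds/s$ — that diverges, so instead I would use that $\int_{r/2}^r g(s)\,ds/s\ge g(r)\log 2$, giving $S_{1,\infty}g(r)\ge(\log 2)\,g(r)$ pointwise; multiplying by $w$ and applying $\Phi$ monotone yields $\int\Phi[wg]\lesssim_\Phi\int\Phi[w\,S_{1,\infty}g]$ using the $\Delta_2$‑condition to absorb the constant $\log 2$.

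The main obstacle I anticipate is purely a bookkeeping one: getting the exponent $\b$ right after the change of variables and then confirming that the transformed lower index $\tilde p$ still satisfies $1<\tilde p<p_\Phi$ so that Lemma~\ref{Phi} applies. There is a real constraint here — the lemma only allows the weight to be absorbed if the shifted index stays strictly below $p_\Phi$ — and the hypotheses $1<p<p_\Phi\le q_\Phi<q<\infty$ are presumably calibrated exactly so that this works; I would verify this by computing $\b$ explicitly and checking the inequality, rather than arguing abstractly. Everything else (the $\Delta_2$‑reductions, the monotonicity estimate for decreasing $g$, the translation of the integrability hypothesis) is routine. An alternative to the explicit change of variables, if the exponent arithmetic proves delicate, is to invoke the weighted boundedness of Calderón operators directly: $S_{1,\infty}$ is bounded on the weighted space $L_r(w^r\,dr)$ for a range of $r$ around the Boyd indices of $\Phi$ whenever the weight $w$ satisfies the appropriate $A_p$‑type condition, and then apply Theorem~\ref{Bekjan-Chen} to the abelian algebra $L_\infty(0,\infty,w\,dr)$; but the self‑contained substitution argument above is cleaner and avoids weighted‑norm machinery.
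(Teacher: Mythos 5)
There is a genuine gap at the very first step. After the outer substitution $t=r^{1/\a}$ the Jacobian $\a^{-1}r^{1/\a-1}$ sits on the \emph{measure}, while the factor $r^{-1/(q\a)}$ sits \emph{inside} $\Phi$; for a general Orlicz function these cannot be "combined into a single power $r^{\b}$" inside $\Phi$. The identity $\Phi(r^{c}u)\,r^{c'}=\Phi(r^{c+c'/s}u)$-type bookkeeping only works when $\Phi(u)=u^{s}$ is a power, and the $\Delta_2$-condition only absorbs \emph{constant} multiples, not $r$-dependent weights. So your reduction to an unweighted inequality of the form $\int\Phi[w\,S_{1,\infty}g]\lesssim\int\Phi[w\,g]$ with Lebesgue measure does not follow, and Lemma~\ref{Phi} (which is stated for Lebesgue measure) cannot be invoked after your change of variables. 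A secondary but compounding issue: you use $S_{1,\infty}g(r)=\int_0^r g(s)\,ds/s$, whereas the paper's definition gives $S_{1,\infty}g(r)=r^{-1}\int_0^r g(s)\,ds$ (the Hardy average). This misquote is what forces your $\log 2$ workaround for the easy direction (with the correct definition, $g\le S_{1,\infty}g$ pointwise for decreasing $g$, with constant $1$) and is the source of the two inconsistent determinations of $\tilde p$ ($1/\tilde p=1-\b$ versus $1/\tilde p=-\b$) in your conjugation step.

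The fix is essentially what the paper does, and it realizes your "absorb the weight into a shifted Calder\'on operator" idea without ever changing the outer variable: keep the integral in $t$ with Lebesgue measure, write $\psi(t)=t^{-1/q}S_{1,\infty}g(t^{\theta})=t^{-1/p}\int_0^{t^\theta}g(s)\,ds$ with $\theta=1/p-1/q$, and substitute $s=w^{\theta}$ \emph{only in the inner integral}. This yields the exact identity $\psi(t)=\theta\,S_{p,\infty}(h_\theta)(t)$ with $h_\theta(t)=t^{-1/q}g(t^{\theta})$, i.e.\ the weight is absorbed into the Calder\'on operator $S_{p,\infty}$ acting on the weighted function, while the outer measure stays $dt$. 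Then the first inequality of Lemma~\ref{Phi} (with $1<p<p_\Phi$) gives $\int_0^\infty\Phi[\psi(t)]\,dt\lesssim\int_0^\infty\Phi[h_\theta(t)]\,dt$, and the converse direction is the trivial pointwise bound $g\le S_{1,\infty}g$. Your closing alternative (weighted boundedness of the Hardy operator plus Theorem~\ref{Bekjan-Chen} applied to $L_\infty((0,\infty),w\,dr)$) could in principle be made to work, but it is not carried out and needs the weighted-norm hypotheses checked; as written, your main route does not go through.
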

\begin{proof} Since $g\leq S_{1,\infty}g$, one inequality is immediate.
For the non trivial inequality, let $\theta=1/p-1/q$ and define the function
\begin{equation*}
\psi(t)=t^{-1/q} S_{1,\infty}g(t^\theta) =t^{-1/p}  \int_0^{t^\theta} g(s) \ ds, \quad t>0.
\end{equation*}
Using the substitution  $s=w^\theta$, we have
\begin{align*}
\psi(t)&= \theta t^{-1/p}\int_0^t  g(w^\theta) w^{\theta -1} \ dw\\
&=\theta t^{-1/p} \int_0^t w^{1/p}  w^{-1/q}g(w^\theta) \  \frac{dw}{w}\\
&=\theta S_{p,\infty}(h_\theta) (t)
\end{align*}
where $h_\theta$ is the function $t \mapsto t^{-1/q} g(t^\theta)$. We may deduce that
\begin{align*}
\int_0^\infty \Phi\big[ t^{-1/q} S_{1,\infty}g(t^\theta)\big]\ dt &\leq  \int_0^\infty \Phi\big[ \theta S_{p,\infty}(h_\theta)(t)]\ dt \\
&\leq \int_0^\infty \Phi\big[  S_{p,\infty}(h_\theta)(t)]\ dt \\
&\lesssim \int_0^\infty \Phi[ h_\theta(t)] \ dt,
\end{align*}
where the last inequality  comes from  the first inequality in Lemma~\ref{Phi}. This is the desired inequality.
\end{proof}

\bigskip

We  now state the following weighted comparison between  $K$-functionals and $J$-functionals.

\begin{proposition}\label{KJ-comparison}  Assume that $1<p<p_\Phi \leq q_\Phi < q<\infty$ and $\overline{Y}$ is an interpolation couple. Then for every  $y  \in \Sigma(\overline{Y})$,
 \[
 \int_0^\infty \Phi\Big[t^{-1/p}K\big(t^{1/p-1/q}, y; \overline{Y}\big)\Big] \ dt
 \lesssim_{\Phi,p,q} \inf\Big\{\int_0^\infty \Phi\Big[ t^{-1/p} J\big(t^{1/p-1/q}, u(t^{1/p-1/q}); \overline{Y}\big) \Big]\ dt \Big\}
 \]
where the infimum is taken over all representations  $u(\cdot)$ of $y$.
\end{proposition}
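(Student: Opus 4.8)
The plan is to reduce the weighted $K$--$J$ comparison to the weighted Calderón operator estimate in Lemma~\ref{Phi2}, using the classical pointwise bound of a $K$-functional by an integral of $J$-functionals along a representation. Recall the fundamental inequality from real interpolation theory: if $u(\cdot)$ is any representation of $y$ for the couple $\overline{Y}$, then for every $t>0$,
\[
K(t, y; \overline{Y}) \lesssim \int_0^\infty \min\{1, t/s\}\, J(s, u(s); \overline{Y})\, \frac{ds}{s}.
\]
This is the ``fundamental lemma'' of the $J$- and $K$-methods (see \cite{BL,BENSHA}); it holds with an absolute constant. So first I would record this inequality, apply it with $t$ replaced by $t^{1/p-1/q}$, and observe that the right-hand side is exactly (a constant multiple of) a Calderón-type operator $S_{1,\infty}$-expression — more precisely, after the substitutions matching those in Lemma~\ref{Phi2}, the integral $\int_0^\infty \min\{1, t^{\theta}/s\}\, J(s,u(s))\,\frac{ds}{s}$ with $\theta = 1/p-1/q$ splits into an ``$S_{1,\infty}$-part'' (the piece where $s < t^\theta$, integrating $J(s,u(s))$) plus a decreasing tail. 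Substituting $s = w^\theta$ converts this into $S_{1,\infty}$ acting on the rearranged profile, exactly as in the proof of Lemma~\ref{Phi2}.

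The key technical point is to arrange the bookkeeping so that Lemma~\ref{Phi2} applies verbatim. Set $g(s) := J(s, u(s); \overline{Y})$ up to a harmless monotonization — note $g$ need not be decreasing, but the relevant quantity $s \mapsto s^{-1}J(s,u(s))$ behaves well, since $K(t,\cdot)$ is always dominated by $\int \min\{1,t/s\} J(s,u(s))\frac{ds}{s}$ and the integrand on the right can be replaced by its least decreasing majorant without changing the left side by more than a constant (this is standard; one may instead take the infimum over representations, which is what the statement does, and choose $u$ so that $t \mapsto t^{-1}J(t,u(t))$ is decreasing — a ``canonical'' representation always achieves this). Then, with $\theta = 1/p - 1/q$ and the test function $f(t) := t^{-1/q} g(t^\theta)$ (which is assumed to lie in $L_\Phi(0,\infty)$, this being precisely the finiteness of the right-hand side of the claimed inequality), I would write
\[
t^{-1/p} K\big(t^{1/p-1/q}, y; \overline{Y}\big) \lesssim t^{-1/q}\, S_{1,\infty}g(t^\theta) + (\text{decreasing tail term}),
\]
apply $\Phi$, integrate in $t$, use the $\Delta_2$-quasi-triangle inequality \eqref{convex} to separate the two terms, bound the $S_{1,\infty}$ term by Lemma~\ref{Phi2}, and bound the tail term by the elementary estimate $g \le S_{1,\infty}g$ (or directly, since the tail is dominated by $t^{-1/q}g(t^\theta)$ up to a constant). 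Both are then controlled by $\int_0^\infty \Phi[t^{-1/q}g(t^\theta)]\,dt = \int_0^\infty \Phi[t^{-1/p} J(t^{1/p-1/q}, u(t^{1/p-1/q}); \overline{Y})]\,dt$, which is the right-hand side; taking the infimum over representations $u$ finishes the proof.

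The main obstacle I anticipate is the monotonicity/measurability bookkeeping around the representation $u(\cdot)$: the fundamental lemma produces a representation, but to invoke Lemma~\ref{Phi2} one needs the inner function to be nonnegative and decreasing, and one must be careful that passing to the least decreasing majorant (or to a canonical representation) does not inflate the $J$-functional side. This is routine in spirit — it is exactly the kind of normalization used throughout real interpolation — but it requires either citing the right form of the fundamental lemma (one that already yields a representation with $t \mapsto t^{-1}J(t,u(t))$ quasi-decreasing, cf.\ \cite[Ch.~3]{BL}) or inserting a short lemma to that effect. Everything else is a substitution ($s = w^\theta$, matching the exponents $1/p$, $1/q$ to the couple) plus an appeal to Lemma~\ref{Phi2} and the $\Delta_2$-condition, so no genuinely new estimate is needed beyond what Section~3 already provides.
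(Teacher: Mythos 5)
Your overall strategy --- the fundamental lemma of the $J$/$K$-method followed by a reduction to the Calder\'on operator estimates of Section~3 --- is in essence the same inequality the paper uses (its bound $K(t^\theta,y)\le\int_0^{t^\theta} j(s,u)\,ds$, with $j(s,u)=\int_s^\infty r^{-1}J(r,u(r))\,\frac{dr}{r}$, is exactly $K(t^\theta,y)\le\int_0^\infty\min\{1,t^\theta/s\}J(s,u(s))\,\frac{ds}{s}$ after Fubini). But your execution has two genuine gaps. First, the tail term. After multiplying by $t^{-1/p}$, the piece of the fundamental lemma coming from $s>t^\theta$ equals $t^{-1/q}j(t^\theta,u)$, and you claim it is ``dominated by $t^{-1/q}g(t^\theta)$ up to a constant,'' i.e.\ $j(r,u)\lesssim J(r,u(r))$ pointwise. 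This is false in general: $j(r,u)$ integrates the whole future of the representation, and e.g.\ if $J(s,u(s))\equiv c$ on a long interval $[r,R]$ one gets $j(r,u)\approx c/r \gg c = J(r,u(r))$ for small $r$. This is precisely the half of the argument the paper cannot trivialize: its Step~2 (inequality \eqref{step2}) compares $t^{-1/q}j(t^\theta,u)$ to $\psi_\theta(t)=t^{-1/p}J(t^\theta,u(t^\theta))$ only at the level of $\Phi$-moments, via the bound $t^{-1/q}j(t^\theta,u)\le S_{p,q}(\psi_\theta)(t)$ and the second estimate of Lemma~\ref{Phi}; this is where the hypothesis $q_\Phi<q$ enters, and it cannot be replaced by an ``elementary'' pointwise estimate.

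Second, your plan for the head term forces $g(s)=J(s,u(s))$ into Lemma~\ref{Phi2}, whose hypothesis requires $g$ decreasing, and neither of your proposed fixes works: replacing $g$ by its least decreasing majorant can strictly inflate the right-hand side (you acknowledge this but do not resolve it), and switching to a ``canonical'' representation breaks the quantifier structure --- since the infimum is over all representations, the inequality must be proved for an \emph{arbitrary} $u(\cdot)$, and bounding the left side by the $J$-integral of one specially chosen representation does not bound it by the infimum unless you additionally prove that this representation nearly minimizes the $\Phi$-moment $J$-integral, which is not established. The paper's device for exactly this difficulty is the $j$-functional, which is automatically decreasing, so Lemma~\ref{Phi2} applies with no monotonization, at the cost of the extra $S_{p,q}$ comparison above. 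Alternatively, your split can be repaired without any monotonicity: substituting $s=w^\theta$, the head piece times $t^{-1/p}$ equals $\theta\, t^{-1/p}\int_0^t w^{1/p}\psi_\theta(w)\,\frac{dw}{w}$ and the tail piece equals $\theta\, t^{-1/q}\int_t^\infty w^{1/q}\psi_\theta(w)\,\frac{dw}{w}$, so their sum is $\theta\, S_{p,q}(\psi_\theta)(t)$ and a single application of Lemma~\ref{Phi} (which needs no monotonicity of $\psi_\theta$) finishes the proof; as written, however, your argument both misstates the head bound (the measures $ds$ and $ds/s$ are conflated) and omits the genuinely needed tail estimate.
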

\begin{proof} We will deduce the inequality in two steps. First, we recall the  notion of $j$-functional related to the  interpolation couple $\overline{Y}$. Suppose that $y\in  \Sigma(\overline{Y})$ admits a representation $u(\cdot)$. We define
\[
j(s,u)=j(s,u; \overline{Y})= \int_s^\infty t^{-1} J(t, u(t))\  dt/t, \quad s>0.
\]
We will verify first that  the inequality  stated in the proposition holds for $j$-functional in place of $J$-functional. That is, we claim that
\begin{equation}\label{step1}
\int_0^\infty \Phi\big[t^{-1/p}K(t^{1/p-1/q}, y)\big]\ dt \lesssim
\inf\Big\{\int_0^\infty \Phi\big[ t^{-1/q} j(t^{1/p-1/q}, u) \big]\ dt \Big\}
\end{equation}
where the infimum is taken over all representations $u(\cdot)$ of $y$ in the  couple $\overline{Y}$.

To prove this assertion,  we fix  a representation $u(\cdot)$ of $y$. As above, we let $\theta=1/p-1/q$. The crucial point of the argument  is given by the following inequality:
\[
K(t^\theta, y)   \leq \int_0^{t^\theta} j(s,u)\ ds, \quad t>0.
\]
A verification of this fact can be found for instance in \cite[p.~427]{Bennett1}.
 Since  $t^{-1/p}=t^{-1/q} t^{-\theta}$, the preceding inequality can be rewritten in the following form:
\[
t^{-1/p} K(t^\theta,y)\leq t^{-1/q} S_{1,\infty}(j(\cdot, u))(t^\theta), \quad  t>0.
\]
Since $j(\cdot, u)$ is a decreasing function,  after applying the function $\Phi$ on both sides of  the preceding inequality and  taking integrals, \eqref{step1} follows immediately from  Lemma~\ref{Phi2}.

Next, we will verify that for any representation $u(\cdot)$ of $y$, we have
\begin{equation}\label{step2}
\int_0^\infty \Phi\big[ t^{-1/q} j(t^{1/p-1/q}, u ) \big]\ dt \lesssim
\int_0^\infty \Phi\big[ t^{-1/p} J(t^{1/p-1/q}, u(t^{1/p-1/q})) \big]\ dt.
\end{equation}
Indeed, from the definition of $j(\cdot, u)$, we have  $j(t^\theta,u)= \int_{t^\theta}^\infty s^{-1}J(s, u(s)) \ ds/s$. Therefore, for every $t>0$,
\begin{equation*}
t^{-1/q} j(t^{\theta}, u)= t^{-1/q} \int_{t^\theta}^\infty  s^{-1}J(s, u(s)) \ ds/s.
\end{equation*}
Using  the substitution $s=w^\theta$,  the preceding  equality gives for every $t>0$,
\begin{align*}
t^{-1/q} j(t^{\theta}, u) &=\theta  t^{-1/q} \int_t^\infty  J(w^\theta, u(w^\theta)) w^{-2\theta} w^{\theta-1}\ dw \\
&\leq t^{-1/q} \int_t^\infty  J(w^\theta, u(w^\theta)) w^{-\theta} \ dw/w\\
&\leq t^{-1/q} \int_t^\infty  w^{1/q}  w^{-1/p}J(w^\theta, u(w^\theta))  \ dw/w\\
&\leq  S_{p,q} (\psi_\theta)(t) 
\end{align*}
where $\psi_\theta(t)=t^{-1/p} J(t^\theta, u(t^\theta))$. We deduce that
\[
\int_0^\infty  \Phi\big[ t^{-1/q} j(t^\theta, u) \big]\ dt \leq \int_0^\infty  \Phi\big[ S_{p,q}(\psi_\theta)(t)\big]\ dt \lesssim  \int_0^\infty \Phi\big[ \psi_\theta(t)\big] \ dt
\]
where the second inequality comes from the second inequality in Lemma~\ref{Phi}. 
This is  the desired  inequality. Combining \eqref{step1} and \eqref{step2} clearly gives the proposition.
\end{proof}

\begin{remark}  By choosing a representation $u(\cdot)$  satisfying $J(t, u(t)) \leq CK(t, y)$ (for some absolute constant $C$), 
the converse  of the inequality   stated in  Proposition~\ref{KJ-comparison} clearly holds but this fact 
will not be needed.
\end{remark}

We conclude this section with a discretization of the second integral appearing in  Proposition~\ref{KJ-comparison}.

\begin{lemma}\label{discretization} 
Let $1<p<q<\infty$ and  set $\theta=1/p-1/q$. Fix $y \in \Sigma(\overline{Y})$.
 \begin{itemize} 
\item[(i)] Assume that   $y=\int_0^\infty u(t) \ dt/t$  is a representation of $y$.  If  for every $\nu \in \mathbb{Z}$, we set $u_\nu=\int_{2^\nu}^{2^{\nu+1}} u(t)\ dt/t$,  then $y=\sum_{\nu \in \mathbb{Z}} u_\nu$ is a discrete representation of $y$ and
\[
\sum_{\nu \in \mathbb{Z}} 2^{\nu/\theta} \Phi\Big[ 2^{-{\nu}/{(\theta p)}} J\big(2^\nu, u_\nu; \overline{Y}\big)\Big] \lesssim_{\Phi,p,q}
\int_0^\infty  \Phi\Big[ t^{-1/p} J\big(t^{1/p-1/q}, u(t^{1/p-1/q}); \overline{Y}\big)\Big] \ dt.
\]

\item[(ii)] Conversely,  assume that $y$ admits a discrete representation $y=\sum_{\nu \in \mathbb{Z}} u_\nu$.  If we set for $t\in [2^\nu, 2^{\nu +1})$, $u(t)=u_\nu/(\log 2)$ then $y=\int_0^\infty u(t)\ dt/t$  is a representation of $y$ and
\[
\int_0^\infty  \Phi\Big[ t^{-1/p} J\big(t^{1/p-1/q}, u(t^{1/p-1/q}); \overline{Y}\big)\Big] \ dt
\lesssim_{\Phi,p,q}
\sum_{\nu \in \mathbb{Z}} 2^{\nu/\theta} \Phi\Big[ 2^{-{\nu}/{(\theta p)}} J\big(2^\nu, u_\nu; \overline{Y}\big)\Big].
\]
\end{itemize}
\end{lemma}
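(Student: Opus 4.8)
The plan is to prove the two parts by direct computation, using only elementary properties of the $J$-functional (its quasi-monotonicity in $t$ and the triangle inequality in the first variable, i.e. $J(t, u+v) \le J(t,u)+J(t,v)$ up to constants, together with the fact that $J(t,x) \le \max\{1,t/s\}J(s,x)$), the fact that $\Phi$ is increasing, convex, and satisfies the $\Delta_2$-condition, and Jensen's inequality in the form \eqref{convex} adapted to integrals/sums of the convex function $\Phi$. The key change of variables throughout is $s = t^{1/p-1/q} = t^\theta$, which converts the integral over $t \in (0,\infty)$ with weight $dt$ into manageable dyadic blocks; indeed $t \in [2^{\nu/\theta}, 2^{(\nu+1)/\theta})$ corresponds to $s = t^\theta \in [2^\nu, 2^{\nu+1})$, and $t^{-1/p} = s^{-1/(\theta p)}$. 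I would first record the elementary observation that on each such dyadic block the quantities $J(t^\theta, u(t^\theta);\overline{Y})$ and $t^{-1/p}$ are each comparable (up to constants depending only on $p,q$) to their values at the left endpoint, so that $\int_0^\infty \Phi[t^{-1/p}J(t^\theta, u(t^\theta);\overline{Y})]\,dt$ is comparable to the sum over $\nu$ of $2^{\nu/\theta}\,\Phi[2^{-\nu/(\theta p)}\,\langle J\rangle_\nu]$ where $\langle J\rangle_\nu$ is an averaged value of $J$ over the block; the factor $2^{\nu/\theta}$ is the length of the block in the $t$-variable.

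For part (i), I would proceed as follows. First verify that $y = \sum_\nu u_\nu$ is a discrete representation: this is immediate since $\sum_\nu \int_{2^\nu}^{2^{\nu+1}} u(t)\,dt/t = \int_0^\infty u(t)\,dt/t = y$ with convergence in $\Sigma(\overline{Y})$. Next, estimate $J(2^\nu, u_\nu;\overline{Y})$: by the triangle inequality for $J$ in integral form and the quasi-monotonicity $J(2^\nu, u(t)) \le C\,J(t^\theta, u(t^\theta))$ valid when $t$ and $2^{\nu/\theta}$ lie in the same dyadic block (so that $2^\nu$ and $t^\theta$ are comparable), one gets
\[
J(2^\nu, u_\nu;\overline{Y}) \le C \int_{2^{\nu/\theta}}^{2^{(\nu+1)/\theta}} J\big(t^\theta, u(t^\theta);\overline{Y}\big)\,\frac{dt}{t}.
\]
Then multiply by $2^{-\nu/(\theta p)}$, apply $\Phi$, and use Jensen's inequality (the $\Delta_2$-version of \eqref{convex}, with the normalized measure $dt/(t\log(2^{1/\theta}))$ on the block) to pull $\Phi$ inside the integral; the length factor $2^{\nu/\theta}$ then appears when converting $dt/t$ back to $dt$, and summing over $\nu$ recovers the full integral on the right-hand side. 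The $\Delta_2$-condition absorbs the multiplicative constants $C$ and $\log(2^{1/\theta})$.

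For part (ii), the argument is essentially reversed. Given the discrete representation, setting $u(t) = u_\nu/\log 2$ for $t \in [2^\nu, 2^{\nu+1})$ gives $\int_{2^\nu}^{2^{\nu+1}} u(t)\,dt/t = u_\nu$, hence $\int_0^\infty u(t)\,dt/t = \sum_\nu u_\nu = y$, a valid representation. Then $J(t^\theta, u(t^\theta);\overline{Y}) = (\log 2)^{-1} J(t^\theta, u_\nu;\overline{Y})$ for $t^\theta \in [2^\nu, 2^{\nu+1})$, and by quasi-monotonicity this is $\le C\,J(2^\nu, u_\nu;\overline{Y})$; similarly $t^{-1/p} \le C\,2^{-\nu/(\theta p)}$ on the corresponding block. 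Since $\Phi$ is increasing, the integrand is dominated on each block by $\Phi[C\,2^{-\nu/(\theta p)}J(2^\nu,u_\nu;\overline{Y})]$, integrating over the block of length $2^{\nu/\theta}$ gives the $\nu$-th term (with the constant absorbed by $\Delta_2$), and summing finishes the proof. The main obstacle—such as it is—is bookkeeping the exponents correctly through the $s = t^\theta$ substitution and making sure the quasi-monotonicity estimate for $J$ is applied only within a single dyadic block where $t^\theta \asymp 2^\nu$; there is no deep difficulty, only the need to be careful that the weight $2^{\nu/\theta}$ really is the Jacobian of the block and that all comparison constants depend solely on $\Phi$, $p$, and $q$.
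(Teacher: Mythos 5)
Your proposal is correct and follows essentially the same route as the paper: the paper's own proof is just a sketch that performs the same substitution $s=t^{\theta}$ (rewriting the integral with weight $t^{1/\theta}\,dt/t$) and then invokes ``standard arguments from interpolation theory,'' which are exactly the dyadic block-by-block estimates you carry out (triangle inequality and monotonicity of $J$, Jensen plus the $\Delta_2$-condition for part (i), and pointwise domination on each block for part (ii)). Your bookkeeping of the Jacobian factor $2^{\nu/\theta}$ and of the comparability of $t^{-1/p}$ and $J$ within a block is accurate, so the proposal fills in the details the paper leaves to the reader.
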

\begin{proof}[Sketch of the proof]
 Fix a  representation $u(\cdot)$ of $y$. A simple use of substitution gives, 
\[
\int_0^\infty  \Phi\Big[ t^{-1/p} J\big(t^{1/p-1/q}, u(t^{1/p-1/q})\big)\Big] \ dt 
=\theta^{-1}\int_0^\infty  \Phi\Big[ t^{-1/(\theta p)} J\big(t, u(t)\big)\Big]  t^{\theta^{-1} }\ dt/t. \]
Using the integral in the right hand side of the above equality, the verification of the two inequalities in the lemma is a simple adaptation of standard arguments from interpolation theory which we leave for the reader. 
\end{proof}


\section{$\Phi$-moment versions of Burkholder inequalities}

In this section, we  present our  primary objective. That is,  to formulate  $\Phi$-moment extensions of the noncommutative Burkholder inequalities.
The following theorem is the main result of this paper. It extends the noncommutative Burkholder inequalities (for the case $1<p<2$) from \cite[Theorem 6.1]{JX} to moments inequalities involving Orlicz functions.

\begin{theorem}\label{main}  Let $\Phi$ be an Orlicz function satisfying $1<p_\Phi \leq q_\Phi <2$.
There exist  positive constants $\delta_\Phi$  and $\eta_\Phi$ depending only on $\Phi$ such that  for every  martingale $x \in L_\Phi(\M)$, the following inequalities hold:
\begin{equation*}
\delta_\Phi^{-1} S_\Phi(x) \leq \T\big[\Phi(|x|)\big] \leq \eta_\Phi S_\Phi(x) \tag{$B_\Phi$}
\end{equation*}
where $S_\Phi(x)=\inf\Big\{ \T\big[ \Phi( s_c( x^c))\big] +  \T\big[ \Phi( s_r( x^r))\big]  + \sum_{n\geq 1} \T\big[\Phi(|dx_n^d|)\big] \Big\}$
 with  the infimum  being taken over all  $x^c \in \h_\Phi^c(\M)$, $x^r \in \h_\Phi^r(\M)$, and $x^d \in \h_\Phi^d(\M)$ such that $x= x^c + x^r + x^d$.
\end{theorem}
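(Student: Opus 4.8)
The strategy is to transfer the known operator-level noncommutative Burkholder inequality for the scale $L_p(\M)$, $1<p<2$, together with the simultaneous decomposition of Theorem~\ref{simultaneous}, into a $\Phi$-moment statement by interpolating via $K$- and $J$-functionals. The key observation is that for a $\tau$-measurable operator $w$, the singular value function $\mu(w)$ and the $K$-functional $K(t,w;L_1(\M),L_\infty(\M))=\int_0^t\mu_s(w)\,ds$ are essentially interchangeable, so that $\T[\Phi(|w|)]=\int_0^\infty\Phi(\mu_t(w))\,dt$ can be compared with integrals of $\Phi$ applied to $K$-functionals. Since $L_p(\M)\in\mathcal{C}_K(1-1/p)\cap\mathcal{C}_J(1-1/p)$ for the couple $(L_1(\M),L_\infty(\M))$ (the noncommutative analogue of the remark after the class definitions), and since $\h_p(\M)\approx_p L_p(\M)$ with the three-term norm, Proposition~\ref{K-iteration}, Proposition~\ref{J-iteration} and Proposition~\ref{KJ-comparison} will supply exactly the reiteration and $K$--$J$ comparison needed. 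The two inequalities in $(B_\Phi)$ will be proved separately: the \emph{lower} bound $\delta_\Phi^{-1}S_\Phi(x)\le\T[\Phi(|x|)]$ is the easier direction (it amounts to boundedness of the three projections $\Theta$, $\varPi$, and the $\h^c,\h^r$ embeddings in the $\Phi$-moment sense, handled by Proposition~\ref{h-interpolation}); the \emph{upper} bound $\T[\Phi(|x|)]\le\eta_\Phi S_\Phi(x)$ is where the interpolation machinery is really used.

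\textbf{The upper bound.} Fix $x\in L_\Phi(\M)$; we may assume $S_\Phi(x)<\infty$, and by density (using Lemma~\ref{convergence}(ii) to pass to the limit) reduce to $x\in L_1(\M)\cap L_2(\M)$, indeed to finite martingales in $\mathcal{F}_M$. Apply Theorem~\ref{simultaneous} to obtain the simultaneous decomposition $x=a+b+c$ valid for all $1<p<2$ with $\|a\|_{\h_p^d}+\|b\|_{\h_p^c}+\|c\|_{\h_p^r}\le\kappa_p\|x\|_p$. Choosing $1<p<p_\Phi\le q_\Phi<q<2$, consider the couple $\overline{Y}=(\h_{p}(\M),\h_{q}(\M))$ (with the conditioned three-term norms) and $\overline{X}=(L_1(\M),L_\infty(\M))$; one checks $\h_{p}(\M)\in\mathcal{C}_K(1-1/p,\overline{X})$ and $\h_{q}(\M)\in\mathcal{C}_K(1-1/q,\overline{X})$ using the identification $\h_p(\M)\approx_p L_p(\M)$ together with the $K$-functional estimate for $L_p$. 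Then Proposition~\ref{K-iteration} (with $\theta_i=1-1/p$, writing things so the exponent shift matches) gives
\[
\int_0^\infty\Phi\big[t^{-1}K(t,x;\overline{X})\big]\,dt\lesssim_{\Phi,p,q}\int_0^\infty\Phi\big[t^{-1+\theta_0}K(t^{\theta_1-\theta_0},x;\overline{Y})\big]\,dt,
\]
and since the left side is $\simeq\T[\Phi(|x|)]$, it remains to bound the right side. Now I would pick a representation $u(\cdot)$ of $x$ in $\overline{Y}$ coming (via Lemma~\ref{discretization}) from a discrete decomposition adapted to the simultaneous splitting $x=a+b+c$: at the dyadic level $\nu$ one splits $x$ into its $\h^d,\h^c,\h^r$ pieces and balances, so that $J(2^\nu,u_\nu;\overline{Y})$ is controlled by the three quantities $\|(\cdot)\|_{\h^d_p},\|(\cdot)\|_{\h^c_p},\|(\cdot)\|_{\h^r_p}$. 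Feeding this through Proposition~\ref{KJ-comparison} and Proposition~\ref{J-iteration} (now using $\h_{p_i}(\M)\in\mathcal{C}_J(1-1/p_i,\overline{X})$) converts the $K$-integral over $\overline{Y}$ into a $J$-integral, which by the discretization of Lemma~\ref{discretization} and Proposition~\ref{h-interpolation}(iii),(iv) is dominated by $\sum_n\T[\Phi(|dx_n^d|)]+\T[\Phi(s_c(x^c))]+\T[\Phi(s_r(x^r))]$. Taking the infimum over all decompositions gives $\T[\Phi(|x|)]\lesssim_\Phi S_\Phi(x)$.

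\textbf{The lower bound.} For $\delta_\Phi^{-1}S_\Phi(x)\le\T[\Phi(|x|)]$ it suffices to produce \emph{one} admissible decomposition $x=x^c+x^r+x^d$ with $\T[\Phi(s_c(x^c))]+\T[\Phi(s_r(x^r))]+\sum_n\T[\Phi(|dx^d_n|)]\lesssim_\Phi\T[\Phi(|x|)]$. Take the trivial-flavored decomposition obtained from bounded maps at the $L_p$ level: for $1<p<p_\Phi\le q_\Phi<q<2$ there are (by Junge--Xu, \cite[Theorem~6.1]{JX}) bounded projections $L_p(\M)\to\h_p^d(\M),\h_p^c(\M),\h_p^r(\M)$ realizing the decomposition simultaneously for $p$ and $q$; composing with the isometric embeddings $\mathcal{D}_d$, $U\mathcal{D}_c$ and its row analogue and applying Theorem~\ref{Bekjan-Chen} (exactly as in the proof of Proposition~\ref{h-interpolation}) shows that the images of $x\in L_\Phi(\M)$ under these three maps have $\Phi$-moments of $|dx^d_n|$ (summed), $s_c(x^c)$, $s_r(x^r)$ all $\lesssim_\Phi\T[\Phi(|x|)]$. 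This yields the admissible decomposition and hence the lower bound, completing the proof of $(B_\Phi)$.

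\textbf{Main obstacle.} The delicate point is the middle of the upper-bound argument: producing a representation/decomposition $u(\cdot)$ (or its discrete version $(u_\nu)$) of $x$ that is \emph{simultaneously} good for the couple $\overline{X}=(L_1,L_\infty)$ and the couple $\overline{Y}=(\h_p,\h_q)$ and whose $J$-functionals in $\overline{Y}$ actually decompose along the column/row/diagonal directions in a way compatible with $S_\Phi$ — this is precisely where the simultaneous decomposition Theorem~\ref{simultaneous} (rather than a mere decomposition for a single $p$) is indispensable, and where the hypothesis $q_\Phi<2$ enters, via the noncommutative Khintchine inequalities underlying the ``sum'' definition of $\h_\Phi(\M)$. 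Verifying the class memberships $\h_p(\M)\in\mathcal{C}_K(1-1/p,\overline{X})\cap\mathcal{C}_J(1-1/p,\overline{X})$ and carefully tracking that all implied constants depend only on $\Phi$ (through $p_\Phi,q_\Phi$ and a fixed choice of $p,q$) is routine but must be done with care.
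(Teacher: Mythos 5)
You have the two directions of $(B_\Phi)$ backwards, and this reversal is fatal to the proposal. The inequality $\T[\Phi(|x|)]\leq \eta_\Phi S_\Phi(x)$ is the \emph{easy} one: given an arbitrary decomposition $x=x^d+x^c+x^r$, the quasi-triangle inequality for $\Phi$-moments together with the interpolated Burkholder embeddings $\h_\Phi^s(\M)\subset L_\Phi(\M)$ (Proposition~\ref{h-interpolation} applied to the inclusion maps, which are bounded at the two exponents $p,q$ because $1<p,q<2$) gives $\T[\Phi(|x|)]\lesssim_\Phi \T[\Phi(s_c(x^c))]+\T[\Phi(s_r(x^r))]+\sum_n\T[\Phi(|dx_n^d|)]$, and taking the infimum finishes; no simultaneous decomposition and no $K$/$J$-functional machinery is needed there. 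The genuinely hard direction is the \emph{first} inequality $S_\Phi(x)\leq\delta_\Phi\T[\Phi(|x|)]$, i.e.\ \emph{producing} one decomposition whose $\Phi$-moments are controlled by $\T[\Phi(|x|)]$ --- exactly the direction you dismiss as easy.

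Your argument for that direction rests on a claim that is not true (and is not in the cited source): there are no bounded linear projections $L_p(\M)\to\h_p^d(\M),\h_p^c(\M),\h_p^r(\M)$ realizing the Burkholder decomposition for $1<p<2$, let alone ones bounded simultaneously at $p$ and $q$; \cite[Theorem~6.1]{JX} only asserts, for each $x$, the existence of \emph{some} decomposition, obtained by duality and not linearly. Even the simultaneous decomposition of Theorem~\ref{simultaneous}, which controls a single decomposition at all $1<p<2$ at once, is not a linear operator, so Theorem~\ref{Bekjan-Chen}/Proposition~\ref{h-interpolation} (which interpolate \emph{linear} maps) cannot be applied to it directly --- if such two-exponent linear maps existed, the whole theorem would follow in a few lines and the paper's Steps~1--5 would be pointless. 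The actual proof circumvents the nonlinearity precisely with the machinery you reserved for the other inequality: choose a representation $u(\cdot)$ of $x\in L_1(\M)\cap\M$ for the couple $(L_1(\M),\M)$ with $J(t,u(t))\lesssim K(t,x)$, pass to $(L_p(\M),L_q(\M))$ by Proposition~\ref{J-iteration}, discretize into pieces $v_\nu\in L_1(\M)\cap\M$, apply Theorem~\ref{simultaneous} to each $v_\nu$ separately, re-sum the three resulting families (this requires the unconditional-convergence sublemma) to obtain $x=a+b+c$ with $a\in\h_\Phi^d$, $b\in\h_\Phi^c$, $c\in\h_\Phi^r$, and then convert the discrete $J$-estimates back into $K$-functional estimates over $(L_1,L_\infty)$ of $\M\overline{\otimes}\ell_\infty$ and $\M\overline{\otimes}B(\ell_2(\mathbb{N}^2))$ (Propositions~\ref{KJ-comparison} and \ref{K-iteration}) to read off $\sum_n\T[\Phi(|da_n|)]$, $\T[\Phi(s_c(b))]$, $\T[\Phi(s_r(c))]\lesssim_\Phi\T[\Phi(|x|)]$, finishing with a reflexivity/weak-limit argument via Lemma~\ref{convergence} for general $x\in L_\Phi(\M)$. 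Finally, the maps $\Theta$ and $\varPi$ you invoke act on the large algebras $L_\Phi(\M\overline{\otimes}\ell_\infty)$ and $L_\Phi(\M\overline{\otimes}B(\ell_2(\mathbb{N}^2)))$, not on $L_\Phi(\M)$, so they do not produce a decomposition of $x$ either; as written, the hard half of the theorem is unproved.
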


\medskip

Throughout the proof, we fix $p$ and $q$ such that $1<p<p_\Phi\leq q_\Phi<q<2$.
First, we prove the second inequality of $(B_\Phi)$. This  will be deduced  from interpolating the noncommutative Burkholder inequalities. Indeed,  since $1<p,q<2$, the noncommutative Burkholder inequalities implies that   for $s\in\{d,c,r\}$, 
$\h_p^s(\M) \subset L_p(\M)$ and $\h_q^s(\M) \subset L_q(\M)$. By Proposition~\ref{h-interpolation}, it follows that
for every $y \in \h_\Phi^d(\M)$, we have
\begin{equation}\label{diagonal}
\T\big[\Phi(|y|)\big] \leq C_\Phi \sum_{n\geq 1} \T\big[\Phi(|dy_n|)\big].
\end{equation}
Similarly, for  $z \in \h_\Phi^c(\M)$, we have
\begin{equation}\label{column}
\T\big[\Phi(|z|)\big] \leq C_\Phi' \T\big[\Phi(s_c(z))\big].
\end{equation}
Considering adjoint operators, we may also state   that  for  $w \in \h_\Phi^r(\M)$, we have
\begin{equation}\label{row}
\T\big[\Phi(|w|)\big] \leq C_\Phi' \T\big[\Phi(s_r(w))\big].
\end{equation}
Now,  let $x =x^c +x^r + x^d$ with $x^c \in \h_\Phi^c(\M)$, $x^r \in \h_\Phi^r(\M)$, and $x^d \in \h_\Phi^d(\M)$. We deduce from  \eqref{column}, \eqref{row}, and \eqref{diagonal} that
\begin{align*}
\T\big[\Phi(|x|)\big]  &\leq C_\Phi^"\Big\{\T\big[\Phi(|x^d|)\big] +\T\big[\Phi(|x^c|)\big]+\T\big[\Phi(|x^r|)\big]\Big\} \\
&\leq C_\Phi^" \max\{C_\Phi, C_\Phi'\}\Big\{\T\big[\Phi(s_c(x^c))\big]+\T\big[\Phi(s_r(x^r))\big] + \sum_{n\geq 1} \T\big[\Phi(|dx_n^d|)\big]\Big\}.
\end{align*} 
Taking the infimum over all such decompositions completes the proof of the second inequality of $(B_\Phi)$.

\medskip

Now,  we proceed with  the proof of  the first inequality of $(B_\Phi)$.
The proof will be done in several steps and rests  upon the fact  noted earlier that the Orlicz space  $L_\Phi(\M)$  is an interpolation space for the compatible couple $(L_p(\M),L_q(\M))$. A fortiori, it is an interpolation space for the compatible couple $(L_1(\M), \M)$.
 Our approach was motivated by the following  formula on  $K$-functionals: for $x\in L_1(\M) +\M$, 
\[
K(t,x; L_1(\M), \M)=\int_0^t \mu_s(x)\ ds, \quad t>0.
\]
This fact can be found for instance in \cite[Corollary~2.3]{PX3}. We make the following crucial observation:
\begin{equation}\label{Obs}
\T\big[ \Phi\big(|x|\big) \big] =\int_0^\infty \Phi(\mu_t(x))\ dt \simeq \int_0^\infty  \Phi\Big[ t^{-1} K(t, x)\Big]\ dt,
\end{equation}
where the equivalence is taken from  the property of Calder\'on's  operator stated in Lemma~\ref{Phi}. Thus, proving the first inequality in $(B_\Phi)$ amounts to  finding  suitable  estimate for the integral of  the function $t\mapsto \Phi\big[t^{-1}K(t,x)\big]$ from below. However, as it
 will be clear from the steps  taken below,  the  $J$-functionals  computed  with respect to   the compatible 
couple $(L_p(\M), L_q(\M))$  turn out to be  the right framework for  this stated goal.  Below, $C_{\Phi, p,q}$ denotes a  positive constant whose value may change from one line to the next.

\medskip

 $\bullet$ We assume first that $x \in L_1(\M) \cap \M$.
 
\noindent{Step 1.}      Choose a representation  $u(\cdot)$ of $x$ in the compatible couple $(L_1(\M), \M)$ such that:

\begin{equation}\label{J-K}
J(t, u(t)) \leq C K(t,x), \quad t>0
\end{equation}
where $C$ is an absolute constant.
Thus, since $\Phi$ has the $\Delta_2$-condition, we have from \eqref{Obs} and  \eqref{J-K}  that
\begin{equation}\label{J-L1}
\int_0^\infty  \Phi\Big[ t^{-1} J\big(t, u(t)\big)\Big]\ dt  \leq C_\Phi \T\big[ \Phi\big(|x|\big) \big].
\end{equation}

\medskip

\noindent{Step~2.} {\it Changing into  the compatible  couple $(L_p(\M), L_q(\M))$}.  This is done through Proposition~\ref{J-iteration}. Denote by $\overline{X}$ the compatible couple $(L_1(\M), \M)$. If we set $\theta_0=1-p^{-1}$ and $\theta_1=1-q^{-1}$ then $L_p(\M)$ and $L_q(\M)$ belong to  the classes $\cal{C}_J(\theta_0,\overline{X})$ and $\cal{C}_J(\theta_1,\overline{X})$, respectively. 
We claim that $u(\cdot)$ is also a representation $x$  for the compatible couple $(L_p(\M), L_q(\M))$. To verify this claim, fix $p<r<q$. Since $x \in L_1(\M) \cap \M$, it  belongs to $L_r(\M)$. Let $\theta=1-r^{-1}$. We recall  that $L_r(\M)=(L_1(\M),\M)_{\theta,r, K}$ (with equivalent norms) where $(\cdot, \cdot)_{\theta, r, K}$ denotes the real interpolation using the $K$-method (see \cite{BL}). We have  by  the definition of  $(L_1(\M),\M)_{\theta,r, K}$ that the function 
$
t^{-\theta} K(t, x; \overline{X}) \in L_r(\mathbb{R}_+, dt/t)$.
From \eqref{J-K}, we also have $t^{-\theta} J(t, u(t);\overline{X}) \in L_r(\mathbb{R}_+, dt/t)$. It  is implicit in the proof of \cite[Proposition~3.3.19, p~177-178]{Butzer-Berens}
that the latter assertion implies that  the integral
$\int_0^\infty u(t)\ dt/t$ is convergent in $L_p(\M) +L_q(\M)$. This verifies the claim.

With the above observations, it is clear that Proposition~\ref{J-iteration} applies to our situation. We should point out here that the only reason for considering $x \in L_1(\M) \cap \M$ is to insure that $u(\cdot)$ is a representation of $x$ for both compatible couples.

 Putting   \eqref{J-L1} together with Proposition~\ref{J-iteration} yield:
\begin{equation*}
\int_0^\infty  \Phi\Big[ t^{-1/p} J\big(t^{1/p-1/q}, u(t); L_p(\M), L_q(\M)\big)\Big]\ dt  \leq C_{\Phi ,p,q}\T\big[ \Phi\big(|x|\big) \big].
\end{equation*}
For technical reasons that should be clear later, we need to modify the representation as follows: set $1/\alpha=1/p -1/q$ and define:
\[v(t)=\alpha u(t^\alpha)\quad \text{ for $t>0$}.
\]
A simple use of substitution  shows that $v(\cdot)$ is a representation of $x$ in the compatible  couple
$(L_1(\M), \M)$ (also   for the compatible couple $(L_p(\M), L_q(\M))$). Using the representation $v(\cdot)$, 
the preceding inequality becomes:
\begin{equation}\label{J-p-v}
\int_0^\infty  \Phi\Big[ t^{-1/p} J\big(t^{1/p-1/q}, v(t^{1/p-1/q}); L_p(\M), L_q(\M)\big)\Big]\ dt  \leq C_{\Phi ,p,q}\T\big[ \Phi\big(|x|\big) \big].
\end{equation}
 Next, we discretize the integral in \eqref{J-p-v}. If we set $v_\nu=\int_{2^\nu}^{2^{\nu +1}} v(t)\ dt/t$ for every $\nu \in \mathbb{Z}$,  then  $v_\nu \in L_1(\M) \cap \M$ and 
 \begin{equation}\label{discrete-v}
 x=\sum_{\nu \in \mathbb{Z}} v_\nu \ \text{(convergence in $L_p(\M) + L_q(\M)$)}.
 \end{equation}
By Lemma~\ref{discretization}(i), we  deduce from \eqref{J-p-v} that if $\theta=1/p-1/q$ then 
\begin{equation}\label{J-p-v-discrete}
\sum_{\nu \in \mathbb{Z}} 2^{\nu/\theta}  \Phi\Big[2^{-\nu/(\theta p)} J\big(2^\nu, v_\nu; L_p(\M), L_q(\M)\big) \Big]  \leq C_{\Phi ,p,q}\T\big[ \Phi\big(|x|\big) \big].
\end{equation}

\medskip

\noindent{Step~3.}  {\it Use  of the simultaneous decompositions}. In this step, we use   the simultaneous decomposition to generate the appropriate decomposition of $x$. 
This is a reminiscent of an argument used in \cite{Ran18} (see also \cite{Ran21,RW}).

For each $\nu \in \mathbb{Z}$, we note  that  since $v_\nu \in L_1(\M) \cap \M$,  Theorem~\ref{simultaneous}  applies to $v_\nu$.  That is, there exist $a_\nu$, $b_\nu$, and $c_\nu$ in $L_p(\M) \cap L_q(\M)$ satisfying:
\begin{equation}\label{decompositions}
v_\nu = a_\nu + b_\nu + c_\nu
\end{equation}
and if $s$ is equal to either $p$ or $q$,  then
\begin{equation}\label{double-decomposition}
\big\|a_\nu \big\|_{\h_s^d} + \big\|b_\nu \big\|_{\h_s^c} +\big\|c_\nu \big\|_{\h_s^r} \leq 
\kappa(p,q) \big\| v_\nu \big\|_s
\end{equation}
where $\kappa(p,q)=\max\{\kappa_p, \kappa_q\}$ with $\kappa_p$ and $\kappa_q$ are constants from Theorem~\ref{simultaneous}.
For convenience,  we let 
 \[\overline{A} :=(L_p(\M\overline{\otimes} \ell_\infty), L_q(\M\overline{\otimes} \ell_\infty))\  \text{and}\    \overline{B} :=(L_p(\M \overline{\otimes} B(\ell_2(\mathbb{N}^2))) , L_q(\M \overline{\otimes} B(\ell_2(\mathbb{N}^2)))).
 \]

For any given  $\nu \in \mathbb{Z}$, we consider the sequences  $\mathcal{D}_d(a_\nu) \in \Delta(\overline{A})$, 
$U\mathcal{D}_c(b_\nu) \in \Delta(\overline{B})$,  and $U\mathcal{D}_c(c_\nu^*) \in \Delta(\overline{B})$. 
We make the crucial observation  that  the inequalities in  \eqref{double-decomposition} can be reinterpreted using the $J$-functionals as follows:
\begin{equation}\label{J-inequalities}
\begin{split}
J\big( t, \mathcal{D}_d(a_\nu); \overline{A}) &\leq \kappa(p,q) J( t, v_\nu; L_p(\M), L_q(\M)),\  
t>0,\\
 J\big(t, U\mathcal{D}_c(b_\nu); \overline{B}\big) &\leq \kappa(p,q) J( t, v_\nu; L_p(\M), L_q(\M)), \ t>0,
\\
J\big(t, U\mathcal{D}_c(c_\nu^*); \overline{B}\big) &\leq \kappa(p,q) J( t, v_\nu; L_p(\M), L_q(\M)), \ t>0.
\end{split}
\end{equation}

We need the following properties of the  sequences $\{\mathcal{D}_d(a_\nu)\}_{\nu \in \mathbb{Z}}$, 
$\{U\mathcal{D}_c(b_\nu)\}_{\nu \in \mathbb{Z}}$, and $\{U\mathcal{D}_c(c_\nu^*)\}_{\nu \in \mathbb{Z}}$. 
\begin{sublemma}\label{sublemma} 
\begin{enumerate}[{\rm(1)}]
 \item  $ \sum_{\nu \in \mathbb{Z}}  \mathcal{D}_d(a_\nu)$  is   (unconditionally)   convergent in 
 $L_\Phi(\M \overline{\otimes} \ell_\infty)$.
\item $ \sum_{\nu \in \mathbb{Z}} U\mathcal{D}_c(b_\nu)$  is  (unconditionally) convergent in  $L_\Phi(\M \overline{\otimes} B(\ell_2(\mathbb{N}^2))) $. 
\item $\sum_{\nu \in \mathbb{Z}} U\mathcal{D}_c(c_\nu^*)$ is  (unconditionally) convergent in  $L_\Phi(\M \overline{\otimes} B(\ell_2(\mathbb{N}^2)))$.
\end{enumerate}
\end{sublemma}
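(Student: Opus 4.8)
The plan is to prove the three convergence statements of Sublemma~\ref{sublemma} by a single argument, since the three sequences play symmetric roles. Fix one of them, say $\{U\mathcal{D}_c(b_\nu)\}_{\nu\in\mathbb{Z}}$, viewed as a sequence in $L_\Phi(\mathcal{S})$ where $\mathcal{S}=\M\overline{\otimes}B(\ell_2(\mathbb{N}^2))$. Because $L_\Phi$ has $\Delta_2$ (equivalently $q_\Phi<\infty$), a series in $L_\Phi$ converges unconditionally as soon as it converges after passing to an interpolation-type estimate controlled by the $J$-functional data; the natural route is: first show that the partial sums form a Cauchy sequence in $L_\Phi(\mathcal{S})$, then invoke reflexivity/completeness, and finally upgrade to unconditional convergence using that $\Phi$-moments are insensitive to permutations of a summable family (which follows from \eqref{convex} together with $\Delta_2$).

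The key estimate I would establish is that
\[
\sum_{\nu\in\mathbb{Z}} \T\otimes\Tr\Bigl[\Phi\bigl(|U\mathcal{D}_c(b_\nu)|\bigr)\Bigr] \;\lesssim_{\Phi,p,q}\; \T\bigl[\Phi(|x|)\bigr]<\infty.
\]
To get this, recall that $L_\Phi(\mathcal{S})\in\mathrm{Int}(L_p(\mathcal{S}),L_q(\mathcal{S}))$, so $L_\Phi(\mathcal{S})$ belongs to the class $\mathcal{C}_J(\theta_p,\overline{B})$ for $\theta_p$ determined by the exponents. Applying the $\mathcal{C}_J$ property to each $U\mathcal{D}_c(b_\nu)\in\Delta(\overline{B})$ at the scale $t=2^\nu$, with the normalization used in Lemma~\ref{discretization}, gives
\[
\T\otimes\Tr\Bigl[\Phi\bigl(|U\mathcal{D}_c(b_\nu)|\bigr)\Bigr] \;\lesssim\; 2^{\nu/\theta}\,\Phi\Bigl[2^{-\nu/(\theta p)} J\bigl(2^\nu, U\mathcal{D}_c(b_\nu);\overline{B}\bigr)\Bigr],
\]
where the factor $2^{\nu/\theta}$ accounts for the measure of the dyadic block $[2^\nu,2^{\nu+1})$ under $dt$ after the change of variables in Lemma~\ref{discretization}(i); here one must be a little careful that the $\Phi$-moment of an element $b$ of $L_\Phi$ is comparably controlled by an expression of the form $t^{1/\theta}\Phi(t^{-1/(\theta p)}J(t,b))$ at any single scale $t$, which is exactly the quantitative content of membership in $\mathcal{C}_J(\theta_p,\overline{B})$ combined with $\Delta_2$. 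Summing over $\nu$ and then using the $J$-comparison \eqref{J-inequalities} to replace $J(2^\nu,U\mathcal{D}_c(b_\nu);\overline{B})$ by $\kappa(p,q)\,J(2^\nu,v_\nu;L_p(\M),L_q(\M))$, we land on the right-hand side of \eqref{J-p-v-discrete}, which is $\lesssim_{\Phi,p,q}\T[\Phi(|x|)]$. The same reasoning with $\overline{A}$ in place of $\overline{B}$ and $\mathcal{D}_d(a_\nu)$ (resp.\ $U\mathcal{D}_c(c_\nu^*)$) in place of $U\mathcal{D}_c(b_\nu)$ yields the other two summability statements; note that for $c_\nu^*$ one uses $J(t,U\mathcal{D}_c(c_\nu^*);\overline{B})=J(t,U\mathcal{D}_c(c_\nu^*);\overline{B})$ and the fact that taking adjoints is isometric.

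From the summability of $\Phi$-moments one concludes convergence as follows. For any finite set $F\subset\mathbb{Z}$, the quasi-triangle inequality for $\Phi$-moments (a consequence of \eqref{convex} and $\Delta_2$) together with Lemma~\ref{convergence}(i) shows that the tail sums $\sum_{\nu\notin F}$ have $\Phi$-moment tending to $0$ as $F$ exhausts $\mathbb{Z}$; since convergence of $\Phi$-moments to $0$ is equivalent to norm convergence to $0$ in $L_\Phi$ (again by $\Delta_2$, as recalled in the proof of Lemma~\ref{convergence}), the partial sums are Cauchy in the Banach space $L_\Phi(\mathcal{S})$, hence convergent. Unconditionality is automatic: the bound $\sum_\nu\T\otimes\Tr[\Phi(|U\mathcal{D}_c(b_\nu)|)]<\infty$ does not depend on any ordering of $\mathbb{Z}$, so the same Cauchy argument applies to every rearrangement, and a standard argument shows all rearrangements have the same sum (which must coincide with $U\mathcal{D}_c(x)$ by \eqref{discrete-v} and continuity of $U$, $\mathcal{D}_c$).

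The main obstacle I anticipate is the single-scale estimate linking the $\Phi$-moment of an element $b\in\Delta(\overline{B})$ to $t^{1/\theta}\Phi\bigl(t^{-1/(\theta p)}J(t,b)\bigr)$: one has to be careful that this is an inequality valid simultaneously for all scales, and then choose the scale $t=2^\nu$ so that the resulting bound, after summation, reassembles into precisely the discretized integral of \eqref{J-p-v-discrete}. This is really a bookkeeping issue about how the $\mathcal{C}_J$ constant interacts with the change of variables $1/\alpha=1/p-1/q$ already built into $v_\nu$, but it is where a sign error or a misplaced exponent would most easily creep in. Everything else — the passage from $\Phi$-moment summability to unconditional norm convergence — is soft, relying only on $\Delta_2$, \eqref{convex}, and Lemma~\ref{convergence}.
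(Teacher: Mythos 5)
Your first step is essentially sound: the single–scale inequality $\T\otimes\Tr\big[\Phi\big(|U\mathcal{D}_c(b_\nu)|\big)\big]\lesssim_{\Phi,p,q}2^{\nu/\theta}\,\Phi\big[2^{-\nu/(\theta p)}J(2^\nu,U\mathcal{D}_c(b_\nu);\overline{B})\big]$ is true, although not for the reason you give ($L_\Phi$ with $p_\Phi<q_\Phi$ does not lie in a single class $\mathcal{C}_J(\theta,\overline{B})$, which is a statement about norms anyway, not about $\Phi$-moments). It can be proved either from $\mu_s(z)\le\min\big(s^{-1/p}\|z\|_p,\,s^{-1/q}\|z\|_q\big)$ together with $p<p_\Phi\le q_\Phi<q$, or by running Propositions~\ref{K-iteration}, \ref{KJ-comparison} and Lemma~\ref{discretization}(ii) on the one-term discrete representation of $U\mathcal{D}_c(b_\nu)$. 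Combined with \eqref{J-inequalities}, the $\Delta_2$-condition and \eqref{J-p-v-discrete}, this does give $\sum_{\nu}\T\otimes\Tr\big[\Phi\big(|U\mathcal{D}_c(b_\nu)|\big)\big]\lesssim_{\Phi,p,q}\T\big[\Phi(|x|)\big]<\infty$, and likewise for the other two families.

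The genuine gap is in the "soft" step: summability of the $\Phi$-moments of the individual terms does \emph{not} imply that the partial sums are Cauchy in $L_\Phi$. The quasi-triangle inequality for $\Phi$-moments carries a constant $C_\Phi>1$, so iterating it over a long tail produces a factor growing geometrically in the number of terms; there is no mechanism in your argument that makes $\T\big[\Phi\big(|\sum_{\nu\in G}z_\nu|\big)\big]$ small when $\sum_{\nu\in G}\T\big[\Phi(|z_\nu|)\big]$ is small. In fact the implication you need is simply false: with $\Phi(t)=t^2$ take $z_\nu=\nu^{-1}f$ with $\|f\|_2=1$; then $\sum_\nu\T[\Phi(|z_\nu|)]=\sum_\nu\nu^{-2}<\infty$ while $\sum_\nu z_\nu$ diverges in $L_2$, and since $p_\Phi>1$ an analogous example exists for every $\Phi$ considered here. (The parenthetical identification of the sum with $U\mathcal{D}_c(x)$ is also wrong: the series defines $U\mathcal{D}_c(b)$ for a new martingale $b$, and only $a+b+c=x$.) This is exactly why the paper does not argue through moment summability of the pieces: following \cite[Sublemma~3.3]{RW}, one shows that the three series are weakly unconditionally Cauchy — this uses the full two-exponent $J$-functional information in \eqref{J2-discrete} tested against elements of the dual, not just the $\Phi$-moments of the $z_\nu$'s — and then invokes reflexivity of $L_\Phi$ (which rules out a copy of $c_0$, see \cite{D1}) to upgrade weak unconditional Cauchyness to unconditional norm convergence. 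To repair your proof you would have to supply such a wuC (or equivalent duality) argument; the estimate you prove, while correct, cannot do this job on its own.
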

The proof of  Sublemma~\ref{sublemma} is identical to that of \cite[Sublemma~3.3]{RW}. Indeed, the argument used in \cite{RW}  would show that these series are weakly unconditionally Cauchy but since $L_\Phi$ is reflexive these convergences are automatically unconditional (see \cite{D1}). We leave the details to the reader.  From Sublemma~\ref{sublemma}, we  may deduce that there exist $a \in \h_\Phi^d(\M)$, $b \in \h_\Phi^c(\M)$, and $c \in \h_\Phi^r(\M)$ such that:
\begin{equation}\label{sum}
\begin{split}
\mathcal{D}_d(a) &:= \sum_{\nu \in \mathbb{Z}} \mathcal{D}_d(a_\nu)  \in L_\Phi(\M \overline{\otimes} \ell_\infty);\\
U\mathcal{D}_c(b) &:=\sum_{\nu \in \mathbb{Z}} U\mathcal{D}_c(b_\nu) \in L_\Phi(\M \overline{\otimes} B(\ell_2(\mathbb{N}^2))); \\
U\mathcal{D}_c (c^*) &:=\sum_{\nu \in \mathbb{Z}} U\mathcal{D}_c(c_\nu^*) \in L_\Phi(\M \overline{\otimes} B(\ell_2(\mathbb{N}^2))).
\end{split}
\end{equation}
The fact that the sum of the first series belongs to $\mathcal{D}_d(\h_\Phi^d(\M))$
is clear since the terms of the series belong to  the closed subspace $\mathcal{D}_d(\h_\Phi^d(\M))$ and thus the existence of $a \in \h_\Phi^d(\M)$. Similar observations can be made for the other two series.
Now,  combining \eqref{J-p-v-discrete} with \eqref{J-inequalities} lead to the following inequalities:
\begin{equation}\label{J2-discrete}
\begin{split}
\sum_{\nu \in \mathbb{Z}} 2^{\nu/\theta}  \Phi\Big[2^{-\nu/(\theta p)} J(2^\nu, \mathcal{D}_d(a_\nu); \overline{A}) \Big]  &\leq C_{\Phi ,p,q}\T\big[ \Phi\big(|x|\big) \big];\\
\sum_{\nu \in \mathbb{Z}} 2^{\nu/\theta}  \Phi\Big[2^{-\nu/(\theta p)} J(2^\nu, U\mathcal{D}_c(b_\nu); \overline{B}) \Big]  &\leq C_{\Phi ,p,q}\T\big[ \Phi\big(|x|\big) \big];\\
\sum_{\nu \in \mathbb{Z}} 2^{\nu/\theta}  \Phi\Big[2^{-\nu/(\theta p)} J(2^\nu, U\mathcal{D}_c(c_\nu^*); \overline{B}) \Big]  &\leq C_{\Phi ,p,q}\T\big[ \Phi\big(|x|\big) \big].
\end{split}
\end{equation}
Next, we go back to the continuous case. By setting for $t\in [2^\nu, 2^{\nu +1})$,
\[
\mathcal{D}_d(a(t))=\frac{\mathcal{D}_d(a_\nu)}{\log2} \in \Delta(\overline{A}),\  U\mathcal{D}_c(b(t))=\frac{U\mathcal{D}_c(b_\nu)}{\log2} \in \Delta(\overline{B}), \ \text{and}\ U \mathcal{D}_c(c(t)^*)=\frac{U\mathcal{D}_c(c_\nu^*)}{\log2} \in \Delta(\overline{B}),
\]
we obtain that $\mathcal{D}_d(a(\cdot))$ is a representation of $\mathcal{D}_d(a)$ in the couple $\overline{A}$, $U\mathcal{D}_c(b(\cdot))$ is a representation of $U\mathcal{D}_c(b)$ in the couple $\overline{B}$, and $U \mathcal{D}_c(c(\cdot)^*)$ is a representation of $U\mathcal{D}_c(c^*)$ in the couple $\overline{B}$.  Moreover, Lemma~\ref{discretization}(ii) and \eqref{J2-discrete} give  integral estimates involving the $J$-functionals:
\begin{equation}\label{J2-continuous}
\begin{split}
\int_0^\infty \Phi\Big[ t^{-1/p} J\big(t^{1/p-1/q}; \mathcal{D}_d(a(t^{1/p-1/q})); \overline{A} \big) \Big]\ dt &\leq C_{\Phi ,p,q}\T\big[ \Phi\big(|x|\big) \big],\\
\int_0^\infty \Phi\Big[ t^{-1/p} J\big(t^{1/p-1/q}, U\mathcal{D}_c(b(t^{1/p-1/q})); \overline{B} \big) \Big]\ dt &\leq C_{\Phi ,p,q}\T\big[ \Phi\big(|x|\big) \big],\\
\int_0^\infty \Phi\Big[ t^{-1/p} J\big(t^{1/p-1/q}, U\mathcal{D}_c(c(t^{1/p-1/q})^*); \overline{B} \big) \Big]\ dt &\leq C_{\Phi ,p,q}\T\big[ \Phi\big(|x|\big) \big].
\end{split}
\end{equation}

\medskip

\noindent{Step~4.} {\it Switching  back to $K$-functionals}. In this step, we rewrite \eqref{J2-continuous} using $K$-functionals. Indeed, from Proposition~\ref{KJ-comparison}, we  may state that:
\begin{equation}\label{K2-continuous}
\begin{split}
\int_0^\infty \Phi\Big[ t^{-1/p} K\big(t^{1/p-1/q}, \mathcal{D}_d(a); \overline{A} \big) \Big]\ dt &\leq C_{\Phi ,p,q}\T\big[ \Phi\big(|x|\big) \big],\\
\int_0^\infty \Phi\Big[ t^{-1/p} K\big(t^{1/p-1/q}, U\mathcal{D}_c(b); \overline{B} \big) \Big]\ dt &\leq C_{\Phi ,p,q}\T\big[ \Phi\big(|x|\big) \big],\\
\int_0^\infty \Phi\Big[ t^{-1/p} K\big(t^{1/p-1/q},  U\mathcal{D}_c(c^*) ; \overline{B} \big) \Big]\ dt &\leq C_{\Phi ,p,q}\T\big[ \Phi\big(|x|\big) \big].
\end{split}
\end{equation}

The final part  of the argument is to convert the inequalities in \eqref{K2-continuous} to the  $(L_1,L_\infty)$ type interpolation couples.  This necessary since our initial connections with $\Phi$-moments are with the $K$-functionals relative to the couple $(L_1,L_\infty)$.
 We use Proposition~\ref{K-iteration} for this task. We recall  that if $\N$ is an arbitrary  semifinite von Neumann algebra equipped with a normal semifinite trace  and $\theta_0=1-p^{-1}$ and $\theta_1=1-q^{-1}$ then 
$L_p(\N)$ and $L_q(\N)$ belong to the class $\cal{C}_K(\theta_0, (L_1(\N), \N))$ and
$\cal{C}_K(\theta_0, (L_1(\N), \N))$, respectively. Thus, if  we set $\N_1 :=\M\overline{\otimes} \ell_\infty$ and $\N_2 := \M\overline{\otimes} B(\ell_2(\mathbb{N}^2))$,  then  we may deduce from  \eqref{K2-continuous} and Proposition~\ref{K-iteration} that:
\begin{equation}\label{last}
\begin{split}
\int_0^\infty \Phi\Big[ t^{-1} K\big(t, \mathcal{D}_d(a) ; L_1(\N_1), \N_1 \big) \Big]\ dt &\leq C_{\Phi ,p,q}\T\big[ \Phi\big(|x|\big) \big],\\
\int_0^\infty \Phi\Big[ t^{-1} K\big(t, U\mathcal{D}_c(b); L_1(\N_2),\N_2 \big) \Big]\ dt &\leq C_{\Phi ,p,q}\T\big[ \Phi\big(|x|\big) \big],\\
\int_0^\infty \Phi\Big[ t^{-1} K\big(t, U\mathcal{D}_c(c^*); L_1(\N_2), \N_2 \big) \Big]\ dt &\leq C_{\Phi ,p,q}\T\big[ \Phi\big(|x|\big) \big].
\end{split}
\end{equation}

\medskip

\noindent{Step~5.} {\it Converting \eqref{last} into $\Phi$-moment inequalities.}
For this,   we consider first the diagonal part. We observe  that if $\T \otimes \gamma$ denotes the  natural trace of $\N_1$  and $a=\sum_n da_n$, then 
\begin{align*}
\sum_{n\geq 1} \T\big( \Phi( |da_n|) \big) &=\T\otimes \gamma\big[ \Phi(|\mathcal{D}_d(a)|)\big]\\
&=
\int_0^\infty \Phi\big[ \mu_t(\mathcal{D}_d(a))\big] \ dt\\
&\leq \int_0^\infty \Phi\Big[ t^{-1} K\big(t, \mathcal{D}_d(a); L_1(\N_1), \N_1 \big) \Big]\ dt \\
&\leq C_{\Phi ,p,q}\T\big[ \Phi\big(|x|\big) \big],
\end{align*}
where the singular-value in the 2nd line is taken with respect to $(\N_1, \T \otimes\gamma)$ and the last inequality  comes  from \eqref{last}. This establishes the diagonal part.

For the column version,  we have the estimates:
\begin{align*}
\T\big[ \Phi(s_c(b))\big] &=\T \otimes \Tr\big[\Phi\big(\big| U\mathcal{D}_c(b)]\big|\big)\big]\\
&=\int_0^\infty \Phi\big[\mu_t\big(U\mathcal{D}_c(b)\big)\big]\ dt\\
&\leq \int_0^\infty \Phi\Big[ t^{-1} K\big(t, U \mathcal{D}_c(b) ;  L_1(\N_2),\N_2 \big) \Big]\ dt \\
&\leq C_{\Phi ,p,q}\T\big[ \Phi\big(|x|\big) \big],
\end{align*}
where  the first equality comes from  \eqref{convension-moment}, the singular values are taken relative to  $( \N_2, \T \otimes \Tr)$, and the last inequality is from \eqref{last}.
Similarly, we may also deduce that
\[
\T\big[ \Phi(s_r(c))\big] =\T\big[ \Phi(s_c(c^*))\big]\leq C_{\Phi ,p,q}\T\big[ \Phi\big(|x|\big) \big].
\]
By combining the  last three estimates, we have
\[
\T\big[ \Phi(s_c(b))\big] + \T\big[ \Phi(s_r(c))\big] +\sum_{n\geq 1} \T\big[ \Phi( |da_n|) \big] \leq C_{\Phi ,p,q}\T\big[\Phi\big(|x|\big) \big].
\]
To conclude the proof,   we note  from \eqref{discrete-v}, \eqref{decompositions}, and \eqref{sum},  that  the identity $x=a+b+c$ is clear from the construction.  This completes the proof for the case $x \in L_1(\M) \cap \M$. 

\smallskip

$\bullet$ Assume now that $x \in L_\Phi(\M)$. Since $L_\Phi(\M)$ is a reflexive space, $L_1(\M) \cap \M$  is a dense subset of $L_\Phi(\M)$. Fix  a sequence $(x^{(m)})_{m\geq 1}$ in $L_1(\M)\cap \M$ such  that $\lim_{m\to\infty} \|x^{(m)} -x\|_{L_\Phi(\M)}=0$. By Lemma~\ref{convergence}, we also have $ \lim_{m\to \infty} \T\big[\Phi\big(|x^{(m)}|\big) \big]=  \T\big[\Phi\big(|x|\big) \big]$.
From the previous case, for every $m\geq 1$, there exists a decomposition $x^{(m)}=a^{(m)} +b^{(m)} +c^{(m)} $ with $a^{(m)} \in \h_\Phi^d(\M)$, $b^{(m)} \in \h_\Phi^c(\M)$, and $c^{(m)} \in \h_\Phi^r(\M)$ that satisfy 
\[
\T\big[ \Phi(s_c(b^{(m)}))\big] + \T\big[ \Phi(s_r(c^{(m)}))\big] +\sum_{n\geq 1} \T\big[ \Phi( |da_n^{(m)}|) \big] \leq C_{\Phi ,p,q}\T\big[\Phi\big(|x^{(m)}|\big) \big].
\]
From reflexivity, we may assume (by taking subsequence if necessary) that the sequence  of triplets $\{(a^{(m)}, b^{(m)}, c^{(m)})\}_{m \geq 1}$ converges to $(a,b,c)$ for the weak topology in $ \h_\Phi^d(\M) \oplus_\infty \h_\Phi^c(\M)
\oplus_\infty\h_\Phi^r(\M)$. Clearly, $x=a +b +c$. By Lemma~\ref{convergence}, we have
$\sum_{n\geq 1}\T\big[ \Phi( |da_n|) \big] \leq \liminf_{m\to \infty} \sum_{n\geq 1}\T\big[ \Phi( |da_n^{(m)}|) \big]$,
$\T\big[ \Phi(s_c(b))\big] \leq \liminf_{m\to\infty} \T\big[ \Phi(s_c(b^{(m)}))\big]$, and
$\T\big[ \Phi(s_r(c))\big]\leq \liminf_{m\to \infty} \T\big[ \Phi(s_r(c^{(m)}))\big]$. These yield the following estimates:
\begin{align*}
S(a,b,c;\Phi) &:=\T\big[ \Phi(s_c(b))\big] + \T\big[ \Phi(s_r(c))\big] +\sum_{n\geq 1} \T\big[ \Phi( |da_n|) \big] \\
&\leq\limsup_{m\to \infty} \Big\{ \T\big[ \Phi(s_c(b^{(m)}))\big] + \T\big[ \Phi(s_r(c^{(m)}))\big] +\sum_{n\geq 1} \T\big[ \Phi( |da_n^{(m)}|) \big]\Big\}\\
&\leq C_{\Phi, p,q} \lim_{m\to \infty} \T\big[\Phi\big(|x^{(m)}|\big) \big]\\
&=C_{\Phi,p,q}\T\big[\Phi\big(|x|\big) \big].
\end{align*}
The  proof is complete.
 \qed
 
 \begin{remark} For the case where $\M$ is a finite von Neumann algebra, it is not necessary  in  our argument above to separate the particular case where $x\in L_1(\M) \cap \M$. Indeed, when $\M$ is finite, $L_\Phi(\M) \subset  L_r(\M)$ whenever $p<r<p_\Phi$, thus the argument used in Step~2 applies directly to any  element of $L_\Phi(\M)$.
 \end{remark}


\medskip

Our next result   deals with the case  where the indices of the Orlicz function $\Phi$ are larger than $2$. It may be viewed as a  common generalization of  a $\Phi$-moment result from  classical martingale theory     \cite[Theorem ~1]{Mogy}   and    the noncommutative Burkholder's  inequalities from \cite[Theorem~5.1]{JX}.

\begin{theorem}\label{pl2}
Let $\Phi$ be an Orlicz function satisfying $2<p_\Phi \leq q_\Phi <\infty$. 
There exist  positive constants $\delta_\Phi$  and $\eta_\Phi$ depending only on $\Phi$ such that  for every  martingale $x \in L_\Phi(\M)$, the following inequalities hold:
\begin{equation*}
\delta_\Phi^{-1} M_\Phi(x) \leq \T\big[\Phi(|x|)\big] \leq \eta_\Phi M_\Phi(x) \tag{$B_\Phi$}
\end{equation*}
where $M_\Phi(x)=\max\Big\{ \sum_{n\geq 1}\T\big[ \Phi\big( |dx_n|\big) \big], \T\big[ \Phi( s_c( x))\big],\T\big[ \Phi( s_r( x))\big] \Big\}$.
\end{theorem}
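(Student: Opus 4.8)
The plan is to deduce Theorem~\ref{pl2} from Theorem~\ref{main} by a duality argument, using the complementary Orlicz function $\Phi^*$ and the operator reverse Young's inequality from Proposition~\ref{duality}. Since $2<p_\Phi\le q_\Phi<\infty$, the complementary function $\Phi^*$ satisfies $1<p_{\Phi^*}\le q_{\Phi^*}<2$, so Theorem~\ref{main} applies to $\Phi^*$. Throughout, fix $p,q$ with $2<p<p_\Phi\le q_\Phi<q<\infty$, so that the conjugate exponents satisfy $1<q'<p'<2$ and $L_\Phi(\M)\in{\rm Int}(L_p(\M),L_q(\M))$. As in the proof of Theorem~\ref{main}, the key identity is $\T[\Phi(|x|)]=\int_0^\infty\Phi(\mu_t(x))\,dt\simeq\int_0^\infty\Phi[t^{-1}K(t,x;L_1(\M),\M)]\,dt$ coming from Lemma~\ref{Phi}.

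First I would prove the easy direction $\T[\Phi(|x|)]\le\eta_\Phi M_\Phi(x)$, i.e. the three estimates $\T[\Phi(|x|)]\lesssim_\Phi\sum_n\T[\Phi(|dx_n|)]$, $\T[\Phi(|x|)]\lesssim_\Phi\T[\Phi(s_c(x))]$, and $\T[\Phi(|x|)]\lesssim_\Phi\T[\Phi(s_r(x))]$. These follow exactly as in the first part of the proof of Theorem~\ref{main}: the noncommutative Burkholder inequalities of \cite{JX} for $2<p,q<\infty$ give $\h^s_p(\M)\subset L_p(\M)$ and $\h^s_q(\M)\subset L_q(\M)$ for $s\in\{d,c,r\}$, and then Proposition~\ref{h-interpolation}(i),(ii) (and its row analogue via adjoints) upgrades these inclusions to the stated $\Phi$-moment inequalities. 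Taking the maximum yields the upper bound.

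The substantive direction is $\delta_\Phi^{-1}M_\Phi(x)\le\T[\Phi(|x|)]$, and here I would use duality. Let $x\in L_\Phi(\M)$ be a martingale with $\T[\Phi(|x|)]<\infty$; one may first reduce to $x\in L_1(\M)\cap\M$ (finite martingales) by the same density/reflexivity/lower-semicontinuity argument as at the end of Theorem~\ref{main}, invoking Lemma~\ref{convergence}. Since $L_\Phi(\M)$ is reflexive with dual $L_{\Phi^*}(\M)$ (up to equivalence of norms), and $x$ is an $L_\Phi(\M)$-bounded martingale, one has $\|x\|_{L_\Phi(\M)}\simeq\sup\{|\T(xy^*)|\}$ over norm-one martingales $y\in L_{\Phi^*}(\M)$; moreover by the conditioned Burkholder inequalities for $\Phi^*$ (Theorem~\ref{main} applied to $\Phi^*$) together with $\h_{\Phi^*}(\M)\approx_{\Phi^*}L_{\Phi^*}(\M)$, the predual can be described via the decomposition $y=y^d+y^c+y^r$. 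The plan is: given $x$, apply Proposition~\ref{duality} to the positive operator $|x|$ to produce $y_0\ge0$ in $L_{\Phi^*}(\M)$ commuting with $|x|$ with $|x|y_0=\Phi(|x|)+\Phi^*(y_0)$; writing $x=u|x|$ in polar decomposition and setting $\tilde y=u y_0$, one gets $\T(x\tilde y^*)=\T(|x|y_0)=\T[\Phi(|x|)]+\T[\Phi^*(y_0)]$, and $\T[\Phi^*(y_0)]\le(b_\Phi-1)\T[\Phi(|x|)]$ from the proof of Proposition~\ref{duality}. Thus a single well-chosen $\tilde y$ "tests" $x$ with the right order of magnitude. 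One then projects $\tilde y$ onto the relevant conditioned Hardy spaces using the complemented-projection maps of Lemma~\ref{comp-Int} and the bounded martingale transform/projection $\Theta$ from Proposition~\ref{h-interpolation}, obtaining for the diagonal part $\sum_n\T(dx_n\,(d\tilde y_n)^*)\le\T[\Phi(|x|)]+\sum_n\T[\Phi^*(|d\tilde y_n|)]$ by Lemma~\ref{Young} applied termwise in $\M\overline\otimes\ell_\infty$, and then controlling $\sum_n\T[\Phi^*(|d\tilde y_n|)]$ by $\T[\Phi^*(y_0)]\lesssim\T[\Phi(|x|)]$ via Proposition~\ref{h-interpolation}(iii). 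An analogous argument in $L_{\Phi^*}(\M\overline\otimes B(\ell_2(\mathbb N^2)))$ using \eqref{c-square}, Lemma~\ref{Young}, and Proposition~\ref{h-interpolation}(iv) handles $\T[\Phi(s_c(x))]$ and $\T[\Phi(s_r(x))]$.

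The main obstacle will be the bookkeeping in the duality step: because $\Phi$-moments are not norms, one cannot simply dualize a norm inequality, and one must instead carefully pair $x$ against a \emph{single} operator $\tilde y$ built from Proposition~\ref{duality}, then split the pairing $\T(x\tilde y^*)=\sum_n\T(dx_n(d\tilde y_n)^*)$ into diagonal/column/row contributions and bound each by a $\Phi$-moment of the appropriate square function of $x$ plus a $\Phi^*$-moment of the corresponding piece of $\tilde y$, the latter being absorbed. Making the "absorption" quantitative — i.e. ensuring the constants from Lemma~\ref{Young}, Proposition~\ref{duality} (through $b_\Phi$), Proposition~\ref{h-interpolation}, and the $\Delta_2$/quasi-triangle inequality \eqref{convex} combine so that the $\T[\Phi^*(\cdot)]$ terms can be moved to the left and reabsorbed into $\eta_\Phi^{-1}\T[\Phi(|x|)]$ — is the delicate point, and is where one must be careful that no circularity with the upper bound creeps in. Once the three pairing estimates are in hand, taking the maximum and reversing the reduction to finite martingales via Lemma~\ref{convergence} completes the proof.
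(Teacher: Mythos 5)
You have the two directions of $(B_\Phi)$ interchanged, and this is not just a labelling slip: the proof you give for what you call the ``easy direction'' $\T[\Phi(|x|)]\le\eta_\Phi M_\Phi(x)$ is wrong. You claim the three \emph{individual} estimates $\T[\Phi(|x|)]\lesssim_\Phi\sum_n\T[\Phi(|dx_n|)]$, $\T[\Phi(|x|)]\lesssim_\Phi\T[\Phi(s_c(x))]$, $\T[\Phi(|x|)]\lesssim_\Phi\T[\Phi(s_r(x))]$, deduced from inclusions $\h_p^s(\M)\subset L_p(\M)$ for $2<p<\infty$ via Proposition~\ref{h-interpolation}(i),(ii). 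But for $p>2$ these inclusions fail: the Junge--Xu Burkholder inequalities for $p\ge 2$ give $L_p(\M)=\h_p^d(\M)\cap\h_p^c(\M)\cap\h_p^r(\M)$, so $\|x\|_p$ is controlled only by the \emph{maximum} of the three Hardy norms, never by one of them alone (the inclusions $\h_p^s\subset L_p$ used in Theorem~\ref{main} are available precisely because there $p,q<2$). A one-step martingale $x_1=0$, $x_2=g$ with $\E_1(g)=0$, $\|(\E_1|g|^2)^{1/2}\|_\Phi$ small and $\|g\|_\Phi$ huge already shows $\T[\Phi(|x|)]\lesssim\T[\Phi(s_c(x))]$ is false, even for $\Phi(t)=t^p$. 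So the inequality $\T[\Phi(|x|)]\le\eta_\Phi M_\Phi(x)$ is in fact the hard half of the theorem, and your argument for it does not survive; this is exactly the half the paper proves by duality, applying Proposition~\ref{duality} to $|x|$, decomposing the resulting dual witness $y'=yu^*$ via Theorem~\ref{main} for $\Phi^*$, pairing each piece with $x$ through Lemma~\ref{Young} scaled by a parameter $t_\Phi$ chosen so that $\Phi^*(t_\Phi\,\cdot)\le(2\delta_{\Phi^*})^{-1}\Phi^*(\cdot)$, and then absorbing the finite term $\T[\Phi^*(y)]$.

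Conversely, the inequality you treat as ``substantive,'' $\delta_\Phi^{-1}M_\Phi(x)\le\T[\Phi(|x|)]$, needs no duality at all: since for $2<p<q<\infty$ the maps $x\mapsto x$ are bounded from $L_p(\M)$ into $\h_p^s(\M)$ and from $L_q(\M)$ into $\h_q^s(\M)$ for each $s\in\{d,c,r\}$ (the upper Burkholder estimates), Proposition~\ref{h-interpolation}(iii),(iv) immediately yields $\sum_n\T[\Phi(|dx_n|)]\lesssim_\Phi\T[\Phi(|x|)]$ and $\T[\Phi(s_c(x))],\T[\Phi(s_r(x))]\lesssim_\Phi\T[\Phi(|x|)]$, which is how the paper disposes of it in one line. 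Moreover, as written your duality sketch for this direction does not produce these bounds anyway: the pairing you estimate, $\T(x\tilde y^*)=\T[\Phi(|x|)]+\T[\Phi^*(y_0)]$, carries no information about the $\Phi$-moments of $s_c(x)$, $s_r(x)$ or of the $|dx_n|$, and Lemma~\ref{Young} applied termwise bounds $\sum_n\T(dx_n(d\tilde y_n)^*)$ by $\sum_n\T[\Phi(|dx_n|)]+\sum_n\T[\Phi^*(|d\tilde y_n|)]$, which points in the wrong direction for your goal. The correct repair is to keep your duality machinery (Proposition~\ref{duality}, Theorem~\ref{main} for $\Phi^*$, Lemma~\ref{Young}, and a quantitative absorption parameter) but aim it at the inequality $\T[\Phi(|x|)]\le\eta_\Phi M_\Phi(x)$, and to prove $M_\Phi(x)\lesssim_\Phi\T[\Phi(|x|)]$ by the interpolation argument just described.
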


\begin{proof}   We begin with  the first inequality. This  is a simple application of Proposition~\ref{h-interpolation} and  the noncommutative Burkholder inequalities. 
We  leave the details to the reader.

\medskip

The proof for the second inequality is more involved.
Our approach is a duality type argument  based on the first inequality in Theorem~\ref{main} and Proposition~\ref{duality}. Let
$\Phi^*$ denote the Orlicz function complementary  to $\Phi$. First, we note that since $2<p_\Phi \leq q_\Phi<\infty$, it follows  that
$
1<p_{\Phi^*} \leq q_{\Phi^*} <2$. In particular, Theorem~\ref{main} applies to bounded martingales in $L_{\Phi^*}(\M)$.

Next, we observe  that $\lim_{t\to 0^+}  M(t, \Phi^*)=0$. This fact can be easily seen from the definitions of  the indices. We may choose $t_\Phi$ small enough so that  \[M(t_\Phi, \Phi^*) \leq (2\delta_{\Phi^*})^{-1}
\]
where $\delta_{\Phi^*}$ is the constant from Theorem~\ref{main}  applied to $\Phi^*$. This is equivalent to 
\[
\Phi^*(t_\Phi s) \leq (2\delta_{\Phi^*})^{-1} \Phi^*(s), \quad s>0.
\]
Thus, by functional calculus, for any operator $0\leq z \in L_{\Phi^*}(\M)$, we have
\begin{equation}\label{choose-t}
\Phi^*(t_\Phi z) \leq (2\delta_{\Phi^*})^{-1} \Phi^*(z).
\end{equation}
We are now ready to provide the proof. 
 Assume first that $x \in L_1(\M) \cap \M$.
 By Proposition~\ref{duality},   we may choose $0\leq y \in L_{\Phi^*}(\M)$ such that  $y$ commutes with $|x|$ and  
\begin{equation}\label{choose-y}
 \Phi\big(|x|\big) + \Phi^*(y) =y|x|.
\end{equation}
If $x=u|x|$ is  the polar decomposition of $x$,  we set $y' :=yu^*\in L_{\Phi^*}(\M)$.
Applying Theorem~\ref{main} to $y'$, there exists a decomposition $y'=y^c +y^r + y^d$
with $y^c \in \h_{\Phi^*}^c(\M)$, $y^r \in \h_{\Phi^*}^r(\M)$,  and $y^d \in \h_{\Phi^*}^d(\M)$ satisfying:
\begin{equation}\label{decomposition-y}
\T\big[ \Phi^*( s_c( y^c))\big] +  \T\big[ \Phi^*( s_r( y^r))\big]  + \sum_{n\geq 1} \T\big[\Phi^*(|dy_n^d|)\big]  \leq 2\delta_{\Phi^*}  \T\big[\Phi^*(|y'|)\big].
\end{equation}
 Taking traces on \eqref{choose-y} together with the decomposition of $y'$,  we have
 \begin{align*}
 \T\big[\Phi(|x|)\big]  +\T\big[ \Phi^*(y)] &=  \T(y|x|)\\
&=\T(y'x)\\
&=\T(xy^d) + \T(xy^c) +\T(xy^r)\\
&:= I + II + III.
 \end{align*}
 We estimate $I$, $II$, and $III$ separately.  First, by
applying Lemma~\ref{Young} and \eqref{choose-t},  we get the following estimates:
\begin{align*}
I &=\sum_{n\geq 1} \T(dx_n d{y_n^d})\\
&\leq \sum_{n\geq 1} \T\big[ \Phi\big(t_\Phi^{-1}|dx_n|\big)\big] +\T\big[\Phi^*\big(t_\Phi |d{y_n^d}|\big)\big]\\
&\leq  \sum_{n\geq 1}\T\big[ \Phi\big(t_\Phi^{-1}|dx_n|\big)\big] + (2\delta_{\Phi^*})^{-1} \sum_{n\geq 1}\T\big[\Phi^*\big( |dy_n^d|\big)\big].
\end{align*}
To estimate $II$, we use the embedding of $\h_\Phi^c(\M)$ into $L_\Phi(\M \overline{\otimes} B(\ell_2(\mathbb{N}^2)))$.  First, we note that  since the conditional  expectations $\E_k$'s are trace preserving, we have 
\[
II =\sum_{n\geq 1} \T(\E_{n-1}( dx_nd{y_n^c}))=\T\Big(\sum_{n\geq 1} \E_{n-1}( dx_n d{y_n^c})\Big). \]
We should note here that since $x \in L_1(\M) \cap \M$, for every $n\geq 1$, $dx_n dy_n^c \in L_1(\M) + \M$ and therefore   
$\sum_{n\geq 1} \E_{n-1}( dx_nd{y_n^c})$ is a well-defined operator that belongs to $L_1(\M)$. We claim that
\[
II =\T \otimes \Tr \big[ U\mathcal{D}_c(x^*)^*  U\mathcal{D}_c(y^c)  \big].
\]
To verify this claim, we  begin with the fact  taken  from Lemma~\ref{comp-Int} that $\h_{\Phi^*}^c(\M)  \subseteq \h_p^c(\M) +\h_q^c(\M)$ where $1<p<p_{\Phi^*} \leq q_{\Phi^*} <q <2$.
Write $y^c =\alpha^c + \beta^c$ where $\alpha^c  \in \h_p^c(\M)$ and $\beta^c \in \h_q^c(\M)$. Then from \eqref{c-square},  we have
\[
\big(\sum_{n\geq 1} \E_{n-1}(dx_n d\alpha_n^c)\big) \otimes e_{1,1} \otimes e_{1,1} =
 U\mathcal{D}_c(x^*)^* U\mathcal{D}_c(\alpha^c)
\]
and
\[
\big(\sum_{n\geq 1} \E_{n-1}(dx_n d\beta_n^c)\big) \otimes e_{1,1} \otimes e_{1,1} =
 U\mathcal{D}_c(x^*)^* U\mathcal{D}_c(\beta^c).
\]
Taking the sum of the above two equalities clearly shows  the claim.

 As in the case of $I$,  by applying Lemma~\ref{Young} together with  \eqref{choose-t},  we obtain the estimates
\begin{align*}
II &=\T \otimes \Tr \big[ U\mathcal{D}_c(x^*)^*  U\mathcal{D}_c(y^c)  \big]\\
&\leq \T \otimes \Tr \Big[ \Phi\big(t_\Phi^{-1}|U\mathcal{D}_c(x^*)|\big)\Big] +
\T \otimes \Tr \Big[ \Phi^*\big(t_\Phi|U\mathcal{D}_c(y^c)|\big)\Big]\\
&=\T\big[ \Phi\big(t_\Phi^{-1} s_r(x) \big)\big] +  \T\big[ \Phi^*\big(t_\Phi s_c(y^c) \big)\big]\\
&\leq \T\big[ \Phi\big(t_\Phi^{-1} s_r(x) \big)\big]  + (2\delta_{\Phi^*})^{-1} \T\big[ \Phi^*\big(s_c(y^c) \big)\big].
\end{align*}
By repeating the same  argument with $y^r$,  we may also state  that 
\[III  \leq \T\big[ \Phi\big(t_\Phi^{-1} s_c(x) \big)\big]  + (2\delta_{\Phi^*})^{-1} \T\big[ \Phi^*\big(s_r(y^r)\big) \big].\]
Taking the summation of the previous  estimates and applying \eqref{decomposition-y}, we obtain  that
\begin{equation*}
\begin{split}
 \T\big[\Phi(|x|)\big]  + \T\big[\Phi^*(y) \big] \leq  &\sum_{n\geq 1}\T\big[ \Phi\big(t_\Phi^{-1}|dx_n|\big)\big]  + \T\big[ \Phi\big(t_\Phi^{-1} s_c(x) \big)\big] \\
&+\T\big[ \Phi\big(t_\Phi^{-1} s_r(x) \big)\big]  + \T\big[\Phi^*(|y'|) \big].
\end{split}
\end{equation*}
But since $ \T\big[\Phi^*(|y'|) \big] = \int_0^\infty \Phi^*(\mu_t(yu^*))\ dt \leq   \int_0^\infty \Phi^*(\mu_t(y))\ dt = \T\big[\Phi^*(y) \big]$, we deduce  that 
\begin{align*}
 \T\big[\Phi(|x|)\big]    &\leq \sum_{n\geq 1}\T\big[ \Phi\big(t_\Phi^{-1}|dx_n|\big)\big]  + \T\big[ \Phi\big(t_\Phi^{-1} s_c(x) \big)\big] +\T\big[ \Phi\big(t_\Phi^{-1} s_r(x) \big)\big]\\
 &\leq 3\max\Big\{\sum_{n\geq 1}\T\big[ \Phi\big(t_\Phi^{-1}|dx_n|\big)\big]  , \T\big[ \Phi\big(t_\Phi^{-1} s_c(x) \big)\big] , \T\big[ \Phi\big(t_\Phi^{-1} s_r(x) \big)\big]\Big\}.
\end{align*}
The existence of the constant $\eta_\Phi$ and the second inequality in $(B_\Phi)$  now follow from the $\Delta_2$-condition.
Thus, we have shown the second  inequality in $(B_\Phi)$ for   $x \in L_1(\M) \cap \M$.  The proof for the general case follows the same line of reasoning as in the last part of the proof of Theorem~\ref{main} so we omit the details.
\end{proof}

At the time of this writing, we do not know  of any direct proof of Theorem~\ref{pl2}.
The existing argument for $p$-th moment from \cite{JX} can be adapted to  $\Phi$-moment only for the case where  the Orlicz function $\Phi$ satisfies a H\"older type inequality   $\Phi(ts) \leq C \Phi(t^2)^{1/2}\Phi(s^2)^{1/2}$ for every $t, s>0$ and $C$ is an absolute constant.  The above condition is clearly satisfied by power functions and exponential functions $t\mapsto e^{\alpha t}$ with $\alpha>0$. It is however stronger than being submultiplicative.
We should point out that the approach used in \cite{JX} for the $p$-th moments was to establish the case $2<p<\infty$ first and  then deduce the case $1<p<2$  using duality.
We do not know if  Theorem~\ref{main} can be derived from Theorem~\ref{pl2} via Proposition~\ref{duality}.

\section{$\Phi$-moments and noncommutative Rosenthal inequalities}

In this section,   we consider  notions of noncommutative   independences  introduced in \cite{JX3} and discuss   corresponding $\Phi$-moment results for sums of independent sequences.

 Throughout, we assume that  $\cal{N}$  and $\cal{A}_n$'s are  von Neumann subalgebras  of $(\M,\T)$  with $\cal{N} \subset \cal{A}_n$ for all $n\geq 1$. We  further assume that there exist trace preserving  normal conditional expectations $\E_{\cal{N}}: \M \to \cal{N}$ and $\E_{\cal{A}_n}:\M \to \cal{A}_n$ for all $n\geq 1$. Following \cite{JX3}, we consider the following notions of independences:

\begin{definition}
(i)  We say that $(\cal{A}_n)_{n\geq 1}$  are \emph{independent}  over $\cal{N}$ (or with respect to $\E_{\N}$)  if for every $n\geq 1$, $\E_{\N}(xy)=\E_{\N}(x)\E_{\N}(y)$ holds for all $x \in \cal{A}_n$ and $y$ in the von Neumann algebra generated by $(\cal{A}_j)_{j\neq n}$.

(ii) We say that the   sequence
$(\cal{A}_n)_{n\geq 1}$ 
is   \emph{order independent over $\N$ (or with respect to $\E_{\cal{N}}$})  if for every $n\geq2$, 
\[
\E_{VN(\cal{A}_1,\dots,
\cal{A}_{n-1})}(x)=\E_{\cal{N}}(x), \  x \in \cal{A}_n
\]
where $\E_{VN(\cal{A}_1, \dots, \cal{A}_{n-1})}$ denotes the normal conditional expectation onto the von Neumann subalgebra degenerated by $\cal{A}_1,\dots, \cal{A}_{n-1}$.

(iii) A sequence $(a_n)_{n\geq 1}$  in $L_1(\M)+\M$  is called \emph{(order) independent with respect to $\E_{\cal{N}}$} if
there is an (order) independent sequence $(\cal{A}_n)_{n\geq 1}$  of
von Neumann subalgebras of $\M$ such that $a_n \in L_1(\cal{A}_n) +\cal{A}_n$
for all $n\geq 1$.
\end{definition}

It was noted in \cite{JX3} (Lemma~1.2) that independence implies order independence. We refer to \cite{JX3} for extensive studies and  examples on  (order) independent
sequences. Below we will simply write $\E$ for $\E_{\cal{N}}$ and $\E_n$ for $\E_{VN(\cal{A}_1,\dots,
\cal{A}_{n})}$.

 It is important to observe that if  $(\cal{A}_n)_{n\geq 1}$ is  an  order independent
sequence of von Neumann subalgebras and $a_n \in L_p(\cal{A}_n)$
with $\E(a_n)=0$ ($n\geq 1$) then $(a_n)_{n\geq 1}$ is a martingale
difference sequence  with respect to the increasing filtration $(VN(\cal{A}_1,\dots,
\cal{A}_{n}))_{n\geq 1}$. In this case, one clearly see  from the definition that when $p\geq 2$, the row and column conditioned square functions take the following simpler forms:
\[
s_c\big(\sum_{n\geq 1} a_n\big)=\Big( \sum_{n\geq 1}\E(a_n^*a_n) \Big)^{1/2}\ \text{and} \  s_r\big(\sum_{n\geq 1} a_n\big)=\Big( \sum_{n\geq 1}\E(a_na_n^*)\Big)^{1/2}.
\]
For the remaining of this section, any reference to martingales is understood to be with respect to the filtration described  above.

From the preceding discussion,  for the special case of sums of order independent sequences,   Theorem~\ref{pl2} reads as follows:
\begin{corollary}\label{Rosbig2}
Let $\Phi$ be an Orlicz function satisfying $2<p_\Phi \leq q_\Phi <\infty$. 
There exist  positive constants $\delta_\Phi$  and $\eta_\Phi$ depending only on $\Phi$ such that  for every  order independent sequence $(a_n)_{n\geq 1} \subset   L_\Phi(\M)$ with $\E(a_n)=0$, the following inequalities hold:
\begin{equation*}
\delta_\Phi^{-1} {M}_\Phi(a) \leq \T\big[\Phi(\big| \sum_{n\geq 1} a_n \big|)\big] \leq \eta_\Phi  M_\Phi(a) 
\end{equation*}
where $\displaystyle{{M}_\Phi(a)=\max\Big\{ \sum_{n\geq 1}\T\big[ \Phi\big( |a_n|\big) \big], \T\Big[ \Phi\big(  \big(\sum_{n\geq 1} \E(a_n^*a_n)\big)^{1/2} \big)\Big], \T\Big[ \Phi\big(  \big(\sum_{n\geq 1}\E(a_n a_n^*)\big)^{1/2} \big)\Big]\Big\}}$.
\end{corollary}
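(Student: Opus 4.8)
The corollary is a direct specialization of Theorem~\ref{pl2} to the situation at hand, so the plan is to verify that the hypotheses of Theorem~\ref{pl2} are met and that its conclusion translates into the claimed form. First I would observe that, as recorded in the paragraph preceding the corollary, if $(\cal{A}_n)_{n\geq 1}$ is order independent over $\N$ and $a_n \in L_\Phi(\cal{A}_n)$ with $\E(a_n)=0$, then $(a_n)_{n\geq 1}$ is a martingale difference sequence relative to the filtration $(VN(\cal{A}_1,\dots,\cal{A}_n))_{n\geq 1}$. Hence $x=(x_n)_{n\geq 1}$ with $x_n=\sum_{k=1}^n a_k$ is a martingale with $dx_n=a_n$, and since each $a_n\in L_\Phi(\M)$ and $2<p_\Phi\le q_\Phi<\infty$ forces $L_\Phi(\M)$ to be reflexive, one needs only note that $\sum_{n}a_n$ converges in $L_\Phi(\M)$; this follows because the martingale is $L_\Phi$-bounded, which in turn is guaranteed by the finiteness of $M_\Phi(a)$ together with the first (easy) inequality of Theorem~\ref{pl2} applied to the truncations. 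Alternatively, one may simply restrict attention to finitely supported sequences and pass to the limit exactly as in the last step of the proof of Theorem~\ref{main}.

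Next I would apply Theorem~\ref{pl2} to this martingale $x$. Its conclusion gives
\[
\delta_\Phi^{-1} M_\Phi(x)\leq \T\big[\Phi(|x|)\big]\leq \eta_\Phi M_\Phi(x),
\]
where $M_\Phi(x)=\max\big\{\sum_{n\geq 1}\T[\Phi(|dx_n|)],\ \T[\Phi(s_c(x))],\ \T[\Phi(s_r(x))]\big\}$. Since $dx_n=a_n$ and $|x|=\big|\sum_{n\geq 1}a_n\big|$, it remains to identify $s_c(x)$ and $s_r(x)$ with the explicit expressions appearing in the statement. For this I would invoke the simplified form of the conditioned square functions noted in the text just above the corollary: because $(a_n)$ is an order independent martingale difference sequence with $a_n\in L_\Phi(\cal{A}_n)$ and $q_\Phi>2$ (so that locally one can work in $L_r$ with $r>2$ and the objects $\E_{k-1}|a_k|^2=\E(a_k^*a_k)$ are genuine operators), one has
\[
s_c(x)=\Big(\sum_{n\geq 1}\E(a_n^*a_n)\Big)^{1/2},\qquad s_r(x)=\Big(\sum_{n\geq 1}\E(a_na_n^*)\Big)^{1/2}.
\]
Here I would be slightly careful: the conditioned square function is defined in the paper via the embedding $U\mathcal{D}_c$, so the identification with the naive expression requires that $x$ (or its truncations) lie in $L_2(\M)+\M$, which is automatic for order independent sequences in $L_\Phi(\M)$ with $q_\Phi>2$ since then locally $a_n\in L_r(\cal{A}_n)$ for some $r>2$. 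Substituting these identities into $M_\Phi(x)$ yields exactly $M_\Phi(a)$ as defined in the corollary, and the proof is complete.

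\textbf{Main obstacle.} There is no substantial obstacle here — the corollary is essentially a dictionary translation of Theorem~\ref{pl2}. The only point requiring genuine care is the identification $s_c(x)=\big(\sum_n\E(a_n^*a_n)\big)^{1/2}$, i.e.\ checking that the abstract conditioned square function built from $U\mathcal{D}_c$ coincides with the concrete operator-theoretic formula; this hinges on the order independence (which makes $\E_{n-1}(a_n^*a_n)=\E(a_n^*a_n)$) and on having enough integrability ($q_\Phi>2$) to ensure the relevant operators are $\tau$-measurable and the formulas \eqref{cond-square-original}, \eqref{c-square} agree. Once that is in place, combining it with Theorem~\ref{pl2} gives the stated two-sided inequality with the same constants $\delta_\Phi$ and $\eta_\Phi$.
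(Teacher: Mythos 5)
Your proposal is correct and is essentially the paper's own argument: the paper simply observes (in the paragraph preceding the corollary) that an order independent mean-zero sequence is a martingale difference sequence for the filtration $(VN(\cal{A}_1,\dots,\cal{A}_n))_{n\geq 1}$ with $\E_{n-1}(a_n^*a_n)=\E(a_n^*a_n)$, so that Theorem~\ref{pl2} "reads as" the corollary. One tiny correction: the integrability needed to identify $s_c(x)$ with the naive formula (membership of the relevant operators in $L_2(\M)+\M$, via $L_\Phi(\M)\subset L_p(\M)+L_q(\M)$ with $2<p<p_\Phi\leq q_\Phi<q$) comes from $p_\Phi>2$, not merely $q_\Phi>2$ as you wrote, though this is harmless under the corollary's hypothesis.
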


However, when $1<p_\Phi\leq q_\Phi<2$, the case of sums of independent sequences can not be  directly  read from Theorem~\ref{main} since the decomposition we have in  the statement  of Theorem~\ref{main} are not necessarily made up of independent sequences. Handling  this case requires  a  way of modifying martingale difference sequences into independent sequences.   Below, we adapt the approach of \cite{JX3} for this reduction. In order to state our results,  we need to  formally introduce  some new notation.

For any finite sequence $(a_n)_{1\leq n \leq N} \in \mathcal{F}$, we  define
\[
\big\| (a_n)_{1\leq n \leq N}\big\|_{L_\Phi(\M,\E,\ell_2^c)} :=\Big\| \Big( \sum_{n=1}^N \E(a_k^*a_k) \Big)^{1/2} \Big\|_{L_\Phi(\M)}.
\]
If  we set $\overline{a}=\sum_{n=1}^N  e_{1,n} \otimes a_n \in L_1(B(\ell_2^N) \otimes \M) \cap ( B(\ell_2^N) \otimes \M)$  and 
$\widetilde{ \E}=Id \otimes \E$, then  we have
\[
\big\| (a_n)_{1\leq n \leq N}\big\|_{L_\Phi(\M,\E,\ell_2^c)}= \big\| \overline{a}\big\|_{L_\Phi(B(\ell_2^N) \otimes \M, \widetilde{\E})}
\]
where $L_\Phi(B(\ell_2^N)  \otimes \M, \widetilde{\E}) $ is the conditioned space introduced in Section~2. Therefore, $\| \cdot\|_{L_\Phi(\M,\E,\ell_2^c)}$ defines a norm on the linear space $\mathcal{F}$. We define  $L_\Phi(\M,\E,\ell_2^c)$ to be  the completion of the space $(\mathcal{F}, \| \cdot\|_{L_\Phi(\M,\E,\ell_2^c)})$. The space $L_\Phi(\M,\E,\ell_2^r)$ is defined in a similar way.

 Now,  we consider the subspace $\mathcal{F}^{({\rm Ind})}$ of  $\mathcal{F}$ consisting of all sequences $(a_n)_{n\geq 1}$ in $\mathcal{F}$  such that $a_n \in L_1(\mathcal{A}_n) \cap \mathcal{A}_n$ and $\E(a_n)=0$ and let 
 $\cal{R}_\Phi^c(\M)$ be the closure of $\mathcal{F}^{({\rm Ind})}$ in   $L_\Phi(\M,\E,\ell_2^c)$.  Similarly, we may define the corresponding subspaces of $L_\Phi(\M,\E,\ell_2^r)$ and $L_\Phi(\M\overline{\otimes} \ell_\infty)$ which will  be denoted by $\cal{R}_\Phi^r(\M)$ and $\cal{R}_\Phi^d(\M)$, respectively. When $\Phi(t)=t^p$,  these are exactly the spaces $\cal{R}_p^c(\M)$, $\cal{R}_p^r(\M)$, and $\cal{R}_p^d(\M)$ introduced  in \cite{JX, JX3}. If we denote by $\mathrm{J}: \mathcal{F}^{({\rm Ind})} \to \mathcal{F}_M$ the map defined by  $(a_n)_{n\geq 1} \mapsto \sum_{n\geq  1} a_n$, then for $s \in \{d,c,r\}$,  it extends to an isometric embedding $\mathrm{J}_\Phi^s: \cal{R}_\Phi^s(\M) \to \h_\Phi^s(\M)$. Next, we consider the linear map
 $
 \varTheta: \mathcal{F}_M  \to  \mathcal{F}^{({\rm Ind})}$ defined by setting for any given $x \in  \mathcal{F}_M$, 
 \begin{equation}\label{projection1}
 \varTheta(x)_n:=\begin{cases}
0 \quad &\text{if $n=1$},\\
\displaystyle{\E_{\cal{A}_n}(dx_n)  }  \quad &\text{if $n\geq 2$.}
\end{cases}  
 \end{equation}
It is clear that for every $n\geq 1$, $\E(\varTheta(x)_n)=0$ and therefore $\varTheta(x) \in   \mathcal{F}^{({\rm Ind})}$. The following result is our main tool 
in the proof of Theorem~\ref{Rosless2} below.
\begin{proposition}\label{main-projection}  Let $\Phi$ be an Orlicz function with $1<p_\Phi \leq q_\Phi <\infty$. Then for $s\in \{d,c,r\}$,  $\varTheta: \h_\Phi^s(\M) \to \cal{R}_\Phi^s(\M)$ is bounded. Moreover, 
there exists a constant $C_\Phi$ such that for every $x \in \h_\Phi^d(\M)$ (respectively, $y \in \h_\Phi^c(\M)$), 
\[
\sum_{n\geq 2} \T\big[ \Phi\big(|\varTheta(x)_n|\big)\big] \leq C_\Phi  \sum_{n\geq 1}\T\big[ \Phi\big(|dx_n|\big)\big],
\]
respectively,
\[
\T\Big[ \Phi\Big( s_c\big(\sum_{n\geq 2} \varTheta(y)_n \big) \Big) \Big] \leq  C_\Phi
\T\Big[ \Phi\big(s_c(y)\big)\Big].
\]
\end{proposition}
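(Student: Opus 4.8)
The plan is to prove the stated boundedness of $\varTheta$ on each of $\h_\Phi^d(\M)$, $\h_\Phi^c(\M)$, $\h_\Phi^r(\M)$ by interpolating between the corresponding $p$-th power statements. The starting point is that the analogue of Proposition~\ref{main-projection} with $\Phi(t)=t^p$ is already known from \cite{JX3}: for each $s\in\{d,c,r\}$ and each $1<p<\infty$ the map $\varTheta$ is bounded from $\h_p^s(\M)$ to $\cal{R}_p^s(\M)$ (this is precisely the content of the reduction used in \cite{JX3} to pass from martingale difference sequences to independent sequences). So fix $p_0,p_1$ with $1<p_0<p_\Phi\le q_\Phi<p_1<\infty$. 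For $s\in\{c,r\}$ the space $\cal{R}_\Phi^s(\M)$ is, by construction, a closed subspace of the conditioned space $L_\Phi(B(\ell_2)\otimes\M,\widetilde{\E})$, hence a closed subspace of an $L_\Phi$-space of the semifinite von Neumann algebra associated to that conditioned space (exactly as in Section~2 via the isometry $U$), and likewise $\cal{R}_\Phi^d(\M)$ is a closed subspace of $L_\Phi(\M\overline\otimes\ell_\infty)$. Thus the target of $\varTheta$ embeds isometrically into an $L_\Phi$-space, and the source $\h_\Phi^s(\M)$ interpolates between $\h_{p_0}^s(\M)$ and $\h_{p_1}^s(\M)$ by Lemma~\ref{comp-Int}(iii). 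Composing $\varTheta$ with these isometric embeddings, we obtain a linear map defined on $\h_{p_0}^s(\M)+\h_{p_1}^s(\M)$ which maps $\h_{p_i}^s(\M)$ into $L_{p_i}$ of the appropriate semifinite algebra for $i=0,1$; Proposition~\ref{h-interpolation} (applied with $s\in\{d,c\}$; the row case follows by taking adjoints) then gives the $\Phi$-moment estimate
\[
\sigma\big[\Phi\big(|\varTheta(x)|\big)\big]\le C_\Phi\,\T\big[\Phi\big(s_s(x)\big)\big],
\]
with the left side being exactly $\sum_{n}\T[\Phi(|\varTheta(x)_n|)]$ in the diagonal case and $\T[\Phi(s_c(\sum_n\varTheta(y)_n))]$ in the column case, by the identifications \eqref{cond-square} and \eqref{convension-moment}. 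This is the asserted inequality.

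The one point that needs care, and which I expect to be the main obstacle, is verifying that $\varTheta$ really does land in the \emph{independent}-sequence subspace $\cal{R}_\Phi^s(\M)$ — i.e.\ that the closed subspace structure is respected — rather than merely in the ambient conditioned Orlicz space. For $x\in\mathcal{F}_M$ this is immediate from the definition \eqref{projection1}, since $\varTheta(x)_n=\E_{\cal{A}_n}(dx_n)\in L_1(\cal{A}_n)\cap\cal{A}_n$ with $\E(\varTheta(x)_n)=0$, so $\varTheta(x)\in\mathcal{F}^{({\rm Ind})}$. Since $\mathcal{F}_M$ is dense in $\h_\Phi^s(\M)$ and $\cal{R}_\Phi^s(\M)$ is by definition the closure of $\mathcal{F}^{({\rm Ind})}$, the norm bound obtained from interpolation (applied to the dense subspace $\mathcal{F}_M$) shows $\varTheta$ extends continuously with values in $\cal{R}_\Phi^s(\M)$, and the $\Phi$-moment inequality passes to the limit by Lemma~\ref{convergence}(i) exactly as in the last part of the proof of Theorem~\ref{main}.

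A secondary technical issue is that Proposition~\ref{h-interpolation} as stated takes a map into $L_{p_i}(\N)$ for a \emph{single} semifinite algebra $\N$; here the natural target of $\varTheta$, after embedding, is an $L_{p_i}$-space of a fixed semifinite algebra (namely the one hosting the conditioned space $L_\Phi(B(\ell_2)\otimes\M,\widetilde\E)$ in the column/row case, or $\M\overline\otimes\ell_\infty$ in the diagonal case), so this matches the hypotheses once one checks that the $p_0$- and $p_1$-level conditioned spaces sit compatibly inside that algebra's $L_{p_0}+L_{p_1}$ — which is precisely the content of the construction recalled around \eqref{c-square} and the fact that $U_p$ is independent of $p$. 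With that observed, the three displayed inequalities of the proposition (diagonal, column, and the row case by symmetry) all follow, and the boundedness of $\varTheta:\h_\Phi^s(\M)\to\cal{R}_\Phi^s(\M)$ is a restatement of these via the $\Delta_2$-condition together with the norm--moment equivalence for bounded sets furnished by Lemma~\ref{convergence}.
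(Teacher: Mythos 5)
Your proposal is correct and follows essentially the same route as the paper: the paper first proves the $L_p$-level contraction $\varTheta:\h_p^s(\M)\to\cal{R}_p^s(\M)$ (Lemma~\ref{lem:projection1}, by the same duality-and-independence argument you attribute to \cite{JX3}, which the paper explicitly adapts) and then deduces the proposition by interpolation via Proposition~\ref{h-interpolation}, exactly as you do, including the composition with the isometric embeddings of $\cal{R}_\Phi^s(\M)$ into the ambient $L_\Phi$-spaces. Your added checks that $\varTheta$ maps $\mathcal{F}_M$ into $\mathcal{F}^{({\rm Ind})}$ and that the single-algebra hypothesis of Proposition~\ref{h-interpolation} is met are precisely the details the paper leaves to the reader.
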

We begin with the  verification of the following particular case: 
\begin{lemma}\label{lem:projection1}
Let $1<p<\infty$. Then  for $s\in \{d,c,r\}$,   $\varTheta: \h_p^s(\M) \to \cal{R}_p^s(\M)$ is a  contraction.
\end{lemma}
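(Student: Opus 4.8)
The plan is to obtain the contraction by duality rather than by a direct estimate. A direct attempt runs into trouble when $1<p<2$: the Kadison--Schwarz inequality for the conditional expectations $\E_{\cal{A}_n}$ only gives $\E\big(\varTheta(x)_n^*\varTheta(x)_n\big)\le\E\big(|dx_n|^2\big)$, hence $\|\varTheta(x)\|_{\cal{R}_p^s}\le\big\|\big(\sum_n\E(|dx_n|^2)\big)^{1/2}\big\|_p$, and this last quantity is in general \emph{not} dominated by $\|s_c(x)\|_p$ since $\E$ is expansive on $L_{p/2}$ when $p<2$ (one even checks that $s_c(\mathrm{J}(\varTheta(x)))\le s_c(x)$ fails as an operator inequality). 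So I would argue on the conjugate side, mirroring \cite{JX3}.

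Fix the conjugate index $p'$ of $p$. The one observation to make first is that every $(b_n)_{n\ge1}\in\mathcal{F}^{({\rm Ind})}$ is automatically a martingale difference sequence for the filtration $\big(VN(\cal{A}_1,\dots,\cal{A}_n)\big)_{n\ge1}$: since $b_n\in\cal{A}_n$ and $\E(b_n)=0$, order independence forces $\E_{n-1}(b_n)=\E(b_n)=0$ for $n\ge2$, so $\mathrm{J}((b_n))=\sum_n b_n$ has difference sequence exactly $(b_n)$. Let $Q:\h_{p'}^s(\M)\to\h_{p'}^s(\M)$ be deletion of the first difference, $Q(y)=y-dy_1$; it is a norm-one projection, because $s_c(Qy)^2=s_c(y)^2-|dy_1|^2\le s_c(y)^2$ for $s=c$ (likewise for $s=r$), and for $s=d$ it is compression by the projection $(0,1,1,\dots)\in\M\,\overline{\otimes}\,\ell_\infty$. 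Hence $V:=Q\circ\mathrm{J}_{p'}^s:\cal{R}_{p'}^s(\M)\to\h_{p'}^s(\M)$, which sends $(b_n)$ to $\sum_{n\ge2}b_n$, is a contraction, being a composition of the isometry $\mathrm{J}_{p'}^s$ with the norm-one projection $Q$.

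Finally I would pass to adjoints. Using the dualities $\big(\h_{p'}^s(\M)\big)^*=\h_p^s(\M)$ and $\big(\cal{R}_{p'}^s(\M)\big)^*=\cal{R}_p^s(\M)$ from \cite{JX,JX3}, realized through the pairing $\langle(a_n),(b_n)\rangle=\sum_n\T(a_n^*b_n)$ on difference sequences, the operator $V^*:\h_p^s(\M)\to\cal{R}_p^s(\M)$ is a contraction, and it suffices to check $V^*|_{\mathcal{F}_M}=\varTheta$, since $\mathcal{F}_M$ is dense in $\h_p^s(\M)$ and $\varTheta(x)\in\mathcal{F}^{({\rm Ind})}\subseteq\cal{R}_p^s(\M)$ for $x\in\mathcal{F}_M$. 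For $x\in\mathcal{F}_M$ and $z=(b_n)\in\mathcal{F}^{({\rm Ind})}$, $V(z)$ has difference sequence $(0,b_2,b_3,\dots)$, so
\[
\langle x,V(z)\rangle=\sum_{n\ge2}\T\big(dx_n^*b_n\big)=\sum_{n\ge2}\T\big(dx_n^*\E_{\cal{A}_n}(b_n)\big)=\sum_{n\ge2}\T\big(\E_{\cal{A}_n}(dx_n)^*b_n\big)=\sum_{n\ge1}\T\big(\varTheta(x)_n^*b_n\big)=\langle\varTheta(x),z\rangle,
\]
using $b_n\in\cal{A}_n$ (so $\E_{\cal{A}_n}(b_n)=b_n$), the trace invariance and $*$-compatibility of $\E_{\cal{A}_n}$ (self-adjointness for the $L_2$ pairing), and the definitions $\varTheta(x)_n=\E_{\cal{A}_n}(dx_n)$ for $n\ge2$, $\varTheta(x)_1=0$; density of $\mathcal{F}^{({\rm Ind})}$ in $\cal{R}_{p'}^s(\M)$ then gives $V^*|_{\mathcal{F}_M}=\varTheta$. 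The only real obstacle is recognizing this structure — that $\mathcal{F}^{({\rm Ind})}$-sequences are already martingale differences, which makes $\varTheta$ dual to the harmless inclusion $\mathrm{J}_{p'}^s$ up to deleting one coordinate — after which the argument is routine bookkeeping with the standard dualities of \cite{JX,JX3}.
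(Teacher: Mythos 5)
Your overall direction---dualize and move the conditional expectations onto a test sequence---mirrors the paper's, and your opening observation about why the direct Kadison--Schwarz estimate fails when $1<p<2$ is correct. But there is a genuine gap at the decisive step: you invoke the isometric duality $\big(\cal{R}_{p'}^s(\M)\big)^*=\cal{R}_p^s(\M)$, realized by the pairing $\sum_n\T(a_n^*b_n)$, as if it were a standard fact from \cite{JX,JX3}. What is actually available there (and what the paper cites, \cite[Lemma~0.1]{JX3}) is the duality of the \emph{full} conditioned spaces, $L_{p'}(\M,\E,\ell_2^c)^*=L_p(\M,\E,\ell_2^c)$. The space $\cal{R}_{p'}^c(\M)$ is only a closed subspace, so its dual is a priori a quotient of $L_p(\M,\E,\ell_2^c)$, and identifying that quotient with $\cal{R}_p^c(\M)$ (isometrically, as your ``contraction'' conclusion requires) is exactly what needs proof. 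Concretely, your step needs that $\|\varTheta(x)\|_{\cal{R}_p^c}$ is computed by pairing against the unit ball of $\cal{R}_{p'}^c$ alone; to see this one must replace a norming sequence $(v_n)$ in the unit ball of $L_{p'}(\M,\E,\ell_2^c)$ by the independent mean-zero sequence $\big(\E_{\cal{A}_n}(v_n)-\E(v_n)\big)$ without increasing the conditioned norm, and that is precisely the inequality $\E\big[|\E_{\cal{A}_n}(v_n)-\E(v_n)|^2\big]\le\E(v_n^*v_n)$ (Kadison--Schwarz plus order independence), i.e.\ the contractivity of the very substitution map whose boundedness is being proved. As written, the key independence inequality is hidden inside an unproved duality claim, so the argument is circular. (Two smaller points: $(\h_{p'}^s)^*=\h_p^s$ is in general only an isomorphism coming from complementation, but you only use the constant-one H\"older direction $\h_p^s\hookrightarrow(\h_{p'}^s)^*$, which is fine; and for $s=d$ your route would naively lose a factor $2$ when centering the norming element, although that case is trivially a contraction directly since conditional expectations contract $L_p$.)

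For comparison, the paper's proof is the hands-on version of your adjoint argument, arranged so that only quotable dualities enter: it picks a norming $(v_n)$ (with $v_1=0$) in the unit ball of the full space $L_{p'}(\M,\E,\ell_2^c)$, moves $\E_{\cal{A}_n}$ across the trace onto $v_n$, proves the displayed inequality $\E_{n-1}|\E_{\cal{A}_n}(v_n)-\E(v_n)|^2\le\E(v_n^*v_n)$, concludes that $\sum_{n\ge2}\big(\E_{\cal{A}_n}(v_n)-\E(v_n)\big)$ lies in the unit ball of $\h_{p'}^c(\M)$, and finishes with the constant-one H\"older pairing between $\h_p^c$ and $\h_{p'}^c$. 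If you supplement your write-up with a proof of exactly that inequality---equivalently, that $(v_n)\mapsto\big(\E_{\cal{A}_n}(v_n)-\E(v_n)\big)_{n\ge2}$ is a contraction from $L_{p'}(\M,\E,\ell_2^c)$ into $\h_{p'}^c(\M)$---then your formulation via $V^*$ becomes correct, but at that point it coincides with the paper's argument rather than bypassing it.
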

\begin{proof}
The diagonal part is trivial from the boundedness of conditional expectations in $L_p(\M)$ so it suffices to verify the statement for the column version. We use the fact that $L_p(\M,\E,\ell_2^c)^*=L_{p'}(\M,\E,\ell_2^c)$ where $p'$ denotes the index conjugate to $p$ (see \cite[Lemma~0.1]{JX3}).
Let $x \in \h_p^c(\M)$ and  fix  a sequence $(v_n)$ from  the unit ball of $ L_{p'}(\M,\E,\ell_2^c)$ (with $v_1=0$) so that
\[
\big\|\varTheta(x)\big\|_{\cal{R}_p^c}=\big\|\varTheta(x)\big\|_{L_p(\M,\E,\ell_2^c)}= \sum_{n\geq 2}\T(\varTheta(x)_n v_n^*)=\sum_{n\geq 2}\T(\E_{\cal{A}_n}(dx_n)v_n^*).
\]
By trace invariance and duality between $\h_p^c(\M)$ and $\h_{p'}^c(\M)$, we have
\begin{align*}
\big\|\varTheta(x)\big\|_{\cal{R}_p^c} &= \sum_{n\geq 2} \T\big( dx_n\big[ \E_{\cal{A}_n}(v_n^*) -\E(v_n^*)\big]\big)\\
&\leq \Big\|\sum_{n\geq 2} dx_n \Big\|_{\h_p^c} . \Big\| \sum_{n\geq 2} \E_{\cal{A}_n}(v_n) -\E(v_n)\Big\|_{\h_{p'}^c}.
\end{align*}
One can easily see by  independence  that  for any $n\geq 2$,  the following holds:
\[
\E_{n-1}|\E_{\cal{A}_n}(v_n)-\E(v_n)|^2=\E\big[ \E_{\cal{A}_n}(v_n)^*\E_{\cal{A}_n}(v_n)\big]- \E(v_n)^* \E(v_n) \leq \E(v_n^*v_n).
\]
This implies in particular that  
\[\Big\| \sum_{n\geq 2} \E_{\cal{A}_n}(v_n) -\E(v_n)\Big\|_{\h_{p'}^c} \leq \big\| (v_n) \big\|_{L_{p'}(\M,\E,\ell_2^c)} \leq 1.
\]
We deduce that 
$
\big\|\varTheta(x)\big\|_{\cal{R}_p^c} \leq \Big\|\sum_{n\geq 2} dx_n \Big\|_{\h_p^c} \leq \big\|x \big\|_{\h_p^c}
$
proving that $\varTheta$ is a contraction.
\end{proof}
\begin{remark}
In the proof of Lemma~\ref{lem:projection1}, it is crucial that $v_1=0$. Otherwise, we only get  the equality $\E_{n-1}|\E_{\cal{A}_n}(v_n)-\E(v_n)|^2=|v_1-\E(v_1)|^2$  when  $n=1$. As a result,  the estimate $\big\| \sum_{n\geq 1} \E_{\cal{A}_n}(v_n)-\E(v_n)\big\|_{\h_{p'}^c} \leq  \big\| (v_n) \big\|_{L_{p'}(\M,\E,\ell_2^c)}$ would not be achieved.  This is the primary reason for choosing  $\varTheta(x)_1=0$ in the definition of $\varTheta$.
\end{remark}
The proof of Proposition~\ref{main-projection} is now a simple interpolation of Lemma~\ref{lem:projection1} together with Proposition~\ref{h-interpolation}. We leave the details to the reader. \qed

\medskip

The next theorem is our main result for this section.    It is a  $\Phi$-moment generalization  of  the  noncommutative Rosenthal inequalities from \cite[Theorem~3.2]{JX3}.
\begin{theorem}\label{Rosless2} Let $\Phi$ be an Orlicz function satisfying $1<p_\Phi \leq q_\Phi <2$.
There exist  positive constants ${\tilde{\delta}_\Phi}$  and $\tilde{\eta}_\Phi$ depending only on $\Phi$ such that  for every order independent sequence $(x_n)_{n\geq 1}  \subset  L_\Phi(\M)$ with $\E(x_n)=0$, the following inequalities hold:
\begin{equation*}
\tilde{\delta}_\Phi^{-1} \widetilde{S}_\Phi(x) \leq \T\Big[\Phi\Big(\Big|\sum_{n\geq 1} x_n\Big|\Big)\Big] \leq  \tilde{\eta}_\Phi \widetilde{S}_\Phi(x) 
\end{equation*}
where $\displaystyle{\widetilde{S}_\Phi(x)=\inf\Big\{ \T\Big[ \Phi \big(s_c\big(\sum_{n\geq 1} x_n^c\big)\big)\Big] +  \T\Big[ \Phi\big(\big(s_r\big(\sum_{n\geq 1} x_n^r\big) \big)\Big]  + \sum_{n\geq 1} \T\big[\Phi(|x_n^d|)\big] \Big\}}$
 with  the infimum  being taken over all  $(x_n^c) \in \cal{R}_\Phi^c(\M)$, $(x_n^r) \in \cal{R}_\Phi^r(\M)$, and $(x_n^d) \in \cal{R}_\Phi^d(\M)$ such that  for every $n\geq 1$, $x_n= x_n^c + x_n^r + x_n^d$.
\end{theorem}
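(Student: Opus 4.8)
The plan is to deduce Theorem~\ref{Rosless2} from Theorem~\ref{main} by transporting the decomposition of the martingale $\sum_{n\geq 1} x_n$ into a decomposition made up of \emph{independent} sequences, using the projection $\varTheta$ from Proposition~\ref{main-projection}. First I would record the easy inequality: given any splitting $x_n = x_n^c + x_n^r + x_n^d$ with the indicated membership, the martingale $f=\sum_n x_n$ admits the corresponding splitting into $\mathrm{J}_\Phi^c((x_n^c)) + \mathrm{J}_\Phi^r((x_n^r)) + \mathrm{J}_\Phi^d((x_n^d))$ in $\h_\Phi^c(\M)+\h_\Phi^r(\M)+\h_\Phi^d(\M)$, and since $\mathrm{J}_\Phi^s$ is isometric the three $\Phi$-moment quantities appearing in $\widetilde S_\Phi(x)$ equal $\T[\Phi(s_c(\mathrm{J}_\Phi^c((x_n^c))))]$, $\T[\Phi(s_r(\mathrm{J}_\Phi^r((x_n^r))))]$, and $\sum_n\T[\Phi(|d(\mathrm{J}_\Phi^d((x_n^d)))_n|)]$. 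Hence $\widetilde S_\Phi(x) \geq S_\Phi(f)$ (the infimum over all decompositions being larger on the left because the decompositions are more constrained), and the upper estimate $\T[\Phi(|f|)] \leq \eta_\Phi S_\Phi(f) \leq \eta_\Phi \widetilde S_\Phi(x)$ follows directly from the second inequality of Theorem~\ref{main}. This gives $\tilde\eta_\Phi = \eta_\Phi$.

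The substantive direction is the lower estimate $\widetilde S_\Phi(x) \leq \tilde\delta_\Phi\,\T[\Phi(|f|)]$. Here I would apply the first inequality of Theorem~\ref{main} to $f=\sum_n x_n \in L_\Phi(\M)$ to obtain a decomposition $f = f^d + f^c + f^r$ with $f^d\in\h_\Phi^d(\M)$, $f^c\in\h_\Phi^c(\M)$, $f^r\in\h_\Phi^r(\M)$ and
\[
\sum_{n\geq 1}\T[\Phi(|df^d_n|)] + \T[\Phi(s_c(f^c))] + \T[\Phi(s_r(f^r))] \leq \delta_\Phi\,\T[\Phi(|f|)].
\]
Now apply $\varTheta$. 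Set $x_n^d := \varTheta(f^d)_n$, $x_n^c := \varTheta(f^c)_n$, $x_n^r := \varTheta(f^r)_n$; each lies in the appropriate $\cal{R}_\Phi^s(\M)$ by the definition of $\varTheta$ and Proposition~\ref{main-projection}. The key algebraic point is that $\varTheta$ reconstructs the original sequence: since each $x_n \in L_1(\cal{A}_n)\cap\cal{A}_n$ with $\E(x_n)=0$ and order independence gives $\E_{n-1}(x_n) = \E(x_n) = 0$ for $n\geq 2$ (and $x_1$ is itself $\cal N$-mean-zero, hence killed with the convention $\varTheta(\cdot)_1=0$ — I would need to check the $n=1$ term is handled by the convention, or absorb $x_1$ into one of the three pieces), one has $\varTheta(f)_n = \E_{\cal{A}_n}(dx_n) = \E_{\cal A_n}(x_n) = x_n$ for $n\geq 2$. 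By linearity of $\varTheta$, $\varTheta(f)_n = \varTheta(f^d)_n + \varTheta(f^c)_n + \varTheta(f^r)_n$, so $x_n = x_n^d + x_n^c + x_n^r$ for $n\geq 2$, which is exactly the constraint in the definition of $\widetilde S_\Phi(x)$ (handling $n=1$ separately). Finally, Proposition~\ref{main-projection} gives
\[
\sum_{n\geq 2}\T[\Phi(|x_n^d|)] \leq C_\Phi\sum_{n\geq 1}\T[\Phi(|df^d_n|)],\quad
\T[\Phi(s_c(\mathrm{J}((x_n^c))))]\leq C_\Phi\T[\Phi(s_c(f^c))],
\]
and similarly for the row term, so summing and using the quasi-triangle inequality yields $\widetilde S_\Phi(x) \lesssim_\Phi \delta_\Phi\,\T[\Phi(|f|)]$.

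The main obstacle I anticipate is the bookkeeping at index $n=1$: because $\varTheta(\cdot)_1 = 0$ by design (for the reasons explained in the Remark after Lemma~\ref{lem:projection1}), the reconstruction $\varTheta(f)_n = x_n$ only holds for $n\geq 2$, so the first term $x_1$ must be accounted for by hand — e.g. by noting $x_1\in L_\Phi(\cal A_1)$ is mean zero and placing it into, say, the diagonal piece $x_1^d := x_1$ (with $x_1^c = x_1^r = 0$), which costs only $\T[\Phi(|x_1|)] \leq C_\Phi\,\T[\Phi(|f|)]$ by boundedness of $\E_1$ and the $\Delta_2$-condition. A secondary point to verify carefully is that the images $\mathrm{J}_\Phi^s((x_n^s))$ genuinely coincide with the martingale-level objects so that the identifications of $\Phi$-moments of conditioned square functions via \eqref{convension-moment} are legitimate; this is where the isometric embeddings $\mathrm{J}_\Phi^s: \cal{R}_\Phi^s(\M)\to\h_\Phi^s(\M)$ and the compatibility of the Hardy spaces are used. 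As in the proofs of Theorems~\ref{main} and \ref{pl2}, the passage from $x\in L_1(\M)\cap\M$ (where everything is a genuine finite object) to general $x\in L_\Phi(\M)$ is handled by density and Lemma~\ref{convergence}, together with weak compactness in the reflexive space $\cal{R}_\Phi^d(\M)\oplus_\infty\cal{R}_\Phi^c(\M)\oplus_\infty\cal{R}_\Phi^r(\M)$; I would remark that this is routine and omit the details.
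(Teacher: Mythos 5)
Your proposal is correct and follows essentially the same route as the paper: apply Theorem~\ref{main} to $f=\sum_n x_n$, push the resulting Hardy-space decomposition through the projection $\varTheta$ of Proposition~\ref{main-projection} to regain independent pieces, and handle the index $n=1$ by setting $x_1^d:=x_1$ and bounding $\T[\Phi(|x_1|)]$ via the $\Phi$-moment boundedness of $\E_{\cal A_1}$ — exactly as in the paper (your final density remark is unnecessary, since Theorem~\ref{main} already applies to all of $L_\Phi(\M)$, but it is harmless).
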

\begin{proof}
Since  for $s\in\{d, c, r\}$, the map $\mathrm{J}_\Phi^s: \cal{R}_\Phi^s(\M) \to \h_\Phi^s(\M)$  is an isometric embedding, it is clear that   $S_\Phi(x) \leq \widetilde{S}_\Phi(x)$.  Thus, the second inequality follows immediately from Theorem~\ref{main}. 

The proof of the first inequality is a combination of Theorem~\ref{main} and Proposition~\ref{main-projection}. First, by Theorem~\ref{main}, there exists a constant $\delta_\Phi$ such that if $(x_n)$ is as in the statement of the theorem then   there exists a decomposition $x_n=d\alpha_n + d\beta_n + d\gamma_n$  where   $\alpha  \in \h_\Phi^d(\M)$, $\beta  \in \h_\Phi^c(\M)$,  $\gamma \in \h_\Phi^r(\M)$, and 
\[
\T\Big[ \Phi\big(s_c(\beta)\big)\Big] + \T\Big[ \Phi\big(s_r(\gamma)\big)\Big] +
\sum_{n\geq 1}  \T\big[\Phi(|d\alpha_n|)\big]
\leq 2\delta_\Phi  \T\Big[\Phi\Big(\Big|\sum_{n\geq 1} x_n\Big|\Big)\Big].
\]
Let $x_1^d=x_1$ and $x_n^d= \varTheta(\alpha)_n$ for $n\geq 2$. Similarly,  let
$x^c=\varTheta(\beta)$ and $x^r=\varTheta(\gamma)$.  Then for every $n\geq 1$, $x_n =x_n^d+ x_n^r +x_n^r$. From Proposition~\ref{main-projection}, 
$x^d \in \cal{R}_\Phi^d(\M)$, $x^c \in \cal{R}_\Phi^c(\M)$, and $x^r \in \cal{R}_\Phi^r(\M)$. Moreover, there exists a constant $C_\Phi$ such that:
\begin{align*}
\T\Big[ &\Phi\Big( s_c\big(\sum_{n\geq 2} x_n^c \big) \Big) \Big]  +
\T\Big[ \Phi\Big( s_r\big(\sum_{n\geq 2} x_n^r \big) \Big) \Big]  +
\sum_{n\geq 1} \T\big[\Phi(|x_n^d|)\big]\\
 & \leq   \T\Big[ \Phi(|x_1|) \Big] + C_\Phi\Big\{\T\Big[ \Phi\big( s_c(\beta) \big) \Big]   +    \T\Big[ \Phi\big( s_r(\gamma) \big) \Big]   +  \sum_{n\geq 1} \T\big[\Phi(|d\alpha_n|)\big]\Big\}\\
 &\leq \T\Big[ \Phi(|x_1|) \Big] + 2C_\Phi \delta_\Phi  \T\Big[\Phi\Big(\Big|\sum_{n\geq 1} x_n\Big|\Big)\Big].
 \end{align*}
 Since $\E_{\mathcal{A}_1}$ is bounded in $L_\Phi(\M)$, we have $\T\Big[ \Phi(|x_1|) \Big] \leq D_\Phi \T\Big[\Phi\Big(\Big|\sum_{n\geq 1} x_n\Big|\Big)\Big]$ for some constant $D_\Phi$. We conclude that 
 \[
\T\Big[ \Phi\Big( s_c\big(\sum_{n\geq 2} x_n^c \big) \Big) \Big]  +
\T\Big[ \Phi\Big( s_r\big(\sum_{n\geq 2} x_n^r \big) \Big) \Big]  + \sum_{n\geq 1} \T\big[\Phi(|x_n^d|)\big]
 \leq (D_\Phi +2C_\Phi\delta_\Phi)\T\Big[\Phi\Big(\Big|\sum_{n\geq 1} x_n\Big|\Big)\Big].
\]
This completes the proof.
\end{proof}
We now consider the  noncommutative Rosenthal inequalities for case of noncommutative symmetric spaces. Following \cite{RW}, we  let $E$ denote a symmetric space on $(0,\infty)$  that satisfies the Fatou property. We denote by $p_E$ and $q_E$  the lower and upper Boyd indices respectively.
 We may repeat verbatim the construction above  and define  the spaces  $\mathcal{R}_E^c(\M)$, $\mathcal{R}_E^r(\M)$, and $\mathcal{R}_E^d(\M)$
by simply replacing $L_\Phi$ with $E$. Obvious modification of  the proof of Proposition~\ref{main-projection}   also gives that $\varTheta$ extends to a bounded linear map from $\h_E^s(\M)$ into $\mathcal{R}_E^s(\M)$ for $s \in \{d,c,r\}$. Combining this result with the  extension of  the Burkholder inequalities  to noncommutative symmetric space from \cite[Theorem~3.1]{RW}, we may also state the  following generalization of \cite[Theorem~3.2]{JX3}:
\begin{theorem}\label{ind-sym} Let $E$ be a symmetric function space defined on $(0,\infty)$ with the Fatou property and assume that $1<p_E \leq q_E<2$. Let $x_n \in E(\cal{A}_n)$
such that $\E(x_n)=0$. Then
\[
\Big\| \sum_{n\geq 1} x_n \Big\|_{E(\M)} \simeq_E \inf\left\{ \Big\| (x_n^d) \Big\|_{\cal{R}_E^d} + \Big\| (x_n^c) \Big\|_{\cal{R}_E^c}  +\Big\| (x_n^r) \Big\|_{\cal{R}_E^r} \right\} 
\]
where the infimum is taken over all decomposition $x_n =x_n^d + x_n^c + x_n^r$ with $(x_n^d) \in \cal{R}_E^d(\M)$, $(x_n^c) \in \cal{R}_E^c(\M)$, and $(x_n^r) \in \cal{R}_E^r(\M)$.
\end{theorem}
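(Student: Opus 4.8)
The plan is to transcribe the proof of Theorem~\ref{Rosless2} into the symmetric space setting, replacing $\Phi$-moments by the norm of $E(\M)$; since genuine norms are now involved, no analogue of Lemma~\ref{convergence} is needed and one may work directly with arbitrary elements rather than a dense subset. The three ingredients are: the identification $\h_E(\M)\approx_E E(\M)$ from \cite[Theorem~3.1]{RW} (equivalently, the symmetric space Burkholder inequality, which in particular provides decompositions $\sum_n x_n=\alpha+\beta+\gamma$ with $\alpha\in\h_E^d(\M)$, $\beta\in\h_E^c(\M)$, $\gamma\in\h_E^r(\M)$ of controlled norm); the isometric embeddings $\mathrm{J}_E^s\colon\cal{R}_E^s(\M)\to\h_E^s(\M)$; and the boundedness of $\varTheta\colon\h_E^s(\M)\to\cal{R}_E^s(\M)$ for $s\in\{d,c,r\}$ noted above, the symmetric space analogue of Proposition~\ref{main-projection}.

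For the inequality $\bigl\|\sum_n x_n\bigr\|_{E(\M)}\lesssim_E$ (right-hand side), I would observe that any admissible decomposition $x_n=x_n^d+x_n^c+x_n^r$ yields, via the isometries $\mathrm{J}_E^s$, a martingale decomposition $\sum_n x_n=\mathrm{J}_E^d(x^d)+\mathrm{J}_E^c(x^c)+\mathrm{J}_E^r(x^r)$, so that $\bigl\|\sum_n x_n\bigr\|_{\h_E(\M)}\le\|(x_n^d)\|_{\cal{R}_E^d}+\|(x_n^c)\|_{\cal{R}_E^c}+\|(x_n^r)\|_{\cal{R}_E^r}$; applying $\h_E(\M)\approx_E E(\M)$ and taking the infimum over decompositions closes this direction.

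For the reverse inequality I would apply \cite[Theorem~3.1]{RW} to the martingale $\sum_n x_n\in E(\M)$ to get $\alpha,\beta,\gamma$ as above with $d\alpha_n+d\beta_n+d\gamma_n=x_n$ for every $n$, and then push through $\varTheta$: set $x_1^d=x_1$, $x_n^d=\varTheta(\alpha)_n$ for $n\ge2$, $x^c=\varTheta(\beta)$, $x^r=\varTheta(\gamma)$. Since $x_n\in\cal{A}_n$ gives $\E_{\cal{A}_n}(x_n)=x_n$, one checks $x_n^d+x_n^c+x_n^r=\E_{\cal{A}_n}(d\alpha_n+d\beta_n+d\gamma_n)=x_n$ for $n\ge2$ (and trivially for $n=1$), and boundedness of $\varTheta$ yields $x^d\in\cal{R}_E^d(\M)$, $x^c\in\cal{R}_E^c(\M)$, $x^r\in\cal{R}_E^r(\M)$ with $\|\varTheta(\alpha)\|_{\cal{R}_E^d}\lesssim_E\|\alpha\|_{\h_E^d}$ and the analogous column and row bounds. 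Order independence together with the tower property gives $\E_{\cal{A}_1}\bigl(\sum_n x_n\bigr)=x_1$, so $\|x_1\|_{E(\M)}\lesssim\bigl\|\sum_n x_n\bigr\|_{E(\M)}$ by contractivity of conditional expectations on $E(\M)$; adding the three estimates and then passing to the infimum over all such decompositions gives (right-hand side)$\,\lesssim_E\bigl\|\sum_n x_n\bigr\|_{E(\M)}$.

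Given that the excerpt already records the boundedness of $\varTheta$ on $\h_E^s(\M)$ and the symmetric space Burkholder inequality, the proof is essentially organizational; the only point one must not skip is the verification that the decomposition of $\sum_n x_n$ genuinely reassembles to each $x_n$ after applying $\varTheta$, which rests precisely on the facts $\E_{\cal{A}_n}(x_n)=x_n$ and $\varTheta(\cdot)_1=0$ (hence the separate treatment of $n=1$, exactly as in the remark following Lemma~\ref{lem:projection1}). I expect no real obstacle beyond this bookkeeping.
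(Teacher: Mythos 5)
Your proposal is correct and follows essentially the same route the paper intends: combine the symmetric-space Burkholder decomposition of $\sum_n x_n$ from \cite[Theorem~3.1]{RW} with the boundedness of $\varTheta:\h_E^s(\M)\to\cal{R}_E^s(\M)$ (the symmetric-space analogue of Proposition~\ref{main-projection}) and the isometric embeddings $\mathrm{J}_E^s$, handling $x_1$ separately via $\E_{\cal{A}_1}\big(\sum_n x_n\big)=x_1$ and contractivity of $\E_{\cal{A}_1}$ on $E(\M)$ — exactly the argument of Theorem~\ref{Rosless2} transcribed to norms. No gaps beyond the bookkeeping you already address.
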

A version of Theorem~\ref{ind-sym} for the case where the Boyd indices satisfy the condition $2<p_E\leq q_E <\infty$ was first obtained in \cite[Theorem~6.3]{Dirk-Pag-Pot-Suk}. Similar line of result for  martingale BMO-norms of sums of noncommuting independent sequences were also considered in \cite[Theorem~5.3]{Ran21}.

\smallskip

As  illustrations, we  observe that  all examples  treated in \cite[Section~3]{JX3} can be easily adapted to  Corollary~\ref{Rosbig2}, Theorem~\ref{Rosless2}, and Theorem~\ref{ind-sym}. As a sample result,  the  state  the $\Phi$-moment generalization of \cite[Theorem~3.3]{JX3}:
\begin{theorem}\label{random} Let $\Phi$ be an Orlicz function and $(x_{ij})$  be a finite matrix with entries in $L_\Phi(\M)$. Assume that the $x_{ij}$'s are independent with respect to $\E$ and $\E(x_{ij})=0$. Then 

$\bullet$ for $1<p_\Phi \leq q_\Phi<2$,
\begin{equation*}
\begin{split}
\T &\otimes\tr \Big[ \Phi\big( \big|\sum_{ij}  x_{ij} \otimes e_{ij}\big| \big) \Big] 
\simeq_\Phi \\
&\inf\left\{\sum_{ij}\T\big[ \Phi\big(|x_{ij}^d|\big)\big] + \sum_j \T\Big[ \Phi\big( \big[ \sum_i \E(|x_{ij}^c|^2) \big]^{1/2} \big)\Big] +  \sum_i \T\Big[ \Phi\big( \big[ \sum_j \E({|x_{ij}^r}^*|^2) \big]^{1/2} \big)\Big]\right\}
\end{split} 
\end{equation*}
where the infimum is taken over all decompositions $x_{ij} =x_{ij}^d + x_{ij}^c +x_{ij}^r$ with  mean zero elements $x_{ij}^d$, $x_{ij}^c$, and $x_{ij}^r$, which, for each couple $(i,j)$, belong  to the  Orlicz space associated with  the von Neumann algebra generated by $x_{ij}$.

$\bullet$ for $2<p_\Phi \leq q_\Phi<\infty$, 
\begin{equation*}
\begin{split}
\T&\otimes\tr \Big[ \Phi\big( \big|\sum_{ij}  x_{ij} \otimes e_{ij}\big| \big) \Big] 
\simeq_\Phi \\
&\max\left\{\sum_{ij}\T\big[ \Phi\big(|x_{ij}|\big)\big], \sum_j \T\Big[ \Phi\big( \big[ \sum_i \E(|x_{ij}|^2) \big]^{1/2} \big)\Big], \sum_i \T\Big[ \Phi\big( \big[ \sum_j \E(|x_{ij}^*|^2) \big]^{1/2} \big)\Big]\right\}.
\end{split} 
\end{equation*}
\end{theorem}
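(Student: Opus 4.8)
The plan is to derive Theorem~\ref{random} as a direct specialization of the results already established in this section, precisely mirroring the strategy used in \cite[Theorem~3.3]{JX3} for the $p$-th moment case. The key observation is that a finite matrix $(x_{ij})$ with independent entries over $\E$ can be reorganized into a single \emph{order independent sequence} by choosing any enumeration of the index set $\{(i,j)\}$; the von Neumann algebras $\mathcal{A}_{ij}$ generated by the $x_{ij}$ are independent over $\N$ (hence order independent by \cite[Lemma~1.2]{JX3}), and the mean-zero hypothesis $\E(x_{ij})=0$ is exactly what is required. Thus the element $X:=\sum_{ij} x_{ij}\otimes e_{ij}$ is, up to the ambient tensor factor $B(\ell_2)$, a sum of an order independent mean-zero sequence, and Corollary~\ref{Rosbig2} (for $2<p_\Phi\le q_\Phi<\infty$) or Theorem~\ref{Rosless2} (for $1<p_\Phi\le q_\Phi<2$) applies.

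First I would make precise the identification of the conditioned square functions appearing in those general results with the ``matrix'' square functions in the statement. Working in $B(\ell_2)\overline{\otimes}\M$ with the conditional expectation $\widetilde{\E}=\mathrm{Id}\otimes\E$ and the enumerated filtration, one computes for the column term
\[
s_c\Big(\sum_{ij} x_{ij}\otimes e_{ij}\Big)^2 = \sum_{ij}\widetilde{\E}\big((x_{ij}\otimes e_{ij})^*(x_{ij}\otimes e_{ij})\big) = \sum_{ij}\E(|x_{ij}|^2)\otimes e_{jj} = \sum_j\Big(\sum_i\E(|x_{ij}|^2)\Big)\otimes e_{jj},
\]
so that $\T\otimes\tr[\Phi(s_c(X))] = \sum_j\T\big[\Phi\big(\big(\sum_i\E(|x_{ij}|^2)\big)^{1/2}\big)\big]$, using that $\Phi$ applied to a block-diagonal operator splits as a sum over blocks. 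The row term is handled symmetrically, producing $\sum_i\T\big[\Phi\big(\big(\sum_j\E(|x_{ij}^*|^2)\big)^{1/2}\big)\big]$, and the diagonal term is immediately $\sum_{ij}\T[\Phi(|x_{ij}|)]$ since $\|X\|_{\h_\Phi^d}$-type quantities see each matrix entry separately. Feeding these three identities into Corollary~\ref{Rosbig2} yields the second displayed equivalence verbatim.

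For the range $1<p_\Phi\le q_\Phi<2$ I would invoke Theorem~\ref{Rosless2} applied to the enumerated order independent sequence $(x_{ij})$ inside $B(\ell_2)\overline{\otimes}\M$. This gives an equivalence with an infimum over decompositions $x_{ij}\otimes e_{ij}=y^d+y^c+y^r$ where $y^d\in\cal{R}_\Phi^d$, $y^c\in\cal{R}_\Phi^c$, $y^r\in\cal{R}_\Phi^r$. The only point requiring a short argument is that one may take the pieces of the decomposition to again be supported on the individual one-point von Neumann algebras, i.e.\ of the form $x_{ij}^d\otimes e_{ij}$ etc.\ with $x_{ij}^d,x_{ij}^c,x_{ij}^r$ in the Orlicz space of $VN(x_{ij})$ and mean zero; this follows by applying the conditional expectations onto each $\mathcal{A}_{ij}$ to an arbitrary decomposition, which does not increase any of the three $\cal{R}_\Phi^s$-quantities (conditional expectations are contractive on $L_\Phi$ and on the conditioned spaces, cf.\ the discussion preceding Proposition~\ref{main-projection}), and subtracting the $\N$-parts. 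Once the decomposition has this structure, the same block-diagonal computations as above translate the $\cal{R}_\Phi^c$, $\cal{R}_\Phi^r$, $\cal{R}_\Phi^d$ norms into the three sums in the first displayed formula, completing the proof.

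The main obstacle I anticipate is purely bookkeeping rather than conceptual: one must be careful that the tensor slot $e_{ij}$ is on the \emph{left} in $x_{ij}\otimes e_{ij}$ (matching the convention $\overline{a}=\sum e_{1,n}\otimes a_n$ used to define $L_\Phi(\M,\E,\ell_2^c)$ in this section) so that the column square function collects a sum over the \emph{column index} $j$ and the row square function over the \emph{row index} $i$, and that $\Phi$ of a positive block-diagonal operator indeed decomposes as the sum of $\Phi$ of the blocks — this last fact is immediate from the integral representation $\T[\Phi(|z|)]=\int_0^\infty\Phi(\mu_t(z))\,dt$ together with the fact that the singular value function of a direct sum is the decreasing rearrangement of the concatenation. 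No genuinely new estimate is needed beyond Corollary~\ref{Rosbig2} and Theorem~\ref{Rosless2}, so the proof is short and I would present it mostly as a dictionary between the two formulations, leaving the routine verifications to the reader as the paper does elsewhere.
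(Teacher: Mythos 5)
Your proposal is correct and follows essentially the same route the paper intends: the paper gives no separate proof of Theorem~\ref{random}, asserting it precisely as you do, by enumerating the entries $x_{ij}\otimes e_{ij}$ as an order independent mean-zero sequence in $\M\overline{\otimes}B(\ell_2)$ with respect to $\E\otimes\mathrm{id}$, specializing Corollary~\ref{Rosbig2} and Theorem~\ref{Rosless2}, and identifying the conditioned square functions with the block-diagonal matrix expressions exactly as in your computation. One minor adjustment: to bring an arbitrary decomposition furnished by Theorem~\ref{Rosless2} into the entrywise form $x_{ij}^s\otimes e_{ij}$, the natural operation is compression by $1\otimes e_{ii}$ on the left and $1\otimes e_{jj}$ on the right (followed, if one insists on pieces in the Orlicz space of $VN(x_{ij})$, by the trace preserving conditional expectation onto that algebra), rather than expectations onto the $\mathcal{A}_{ij}$ alone, which leave the matrix component untouched; this is a routine fix that does not affect your argument.
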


As in the case of $p$-th moments,  if  the von algebra $\M$ is  taken to be the $L_\infty$-space defined on  a probability space,  then Theorem~\ref{random} becomes  $\Phi$-moments  inequalities of  random matrices.

\medskip

We conclude this section by noting that  by applying the  reduction technique used above  to the simultaneous decomposition stated in  Theorem~\ref{simultaneous}, we may also   achieve the following version   for independent sequences:

\begin{proposition}\label{simultaneous2} There exists a family  of constants $\{\kappa_p' : 1<p<2\} \subset \mathbb{R}_+$  satisfying the following:
 if  $(x_n)_{n\geq 1}$ is an order independent sequence  in $L_1(\M) \cap L_2(\M)$ with $\E(x_n)=0$ for all $n\geq 1$, then there exist three independent sequences $(a_n)_{n\geq 1}\in \cap_{1<p<2} \cal{R}_p^d(\M)$, $(b_n)_{n\geq 1}\in \cap_{1<p<2} \cal{R}_p^c(\M)$, and $(c_n)_{n\geq 1}\in \cap_{1<p<2}\cal{R}_p^r(\M)$
   such that:
\begin{enumerate}[{\rm(i)}]
\item  $x_n=a_n +b_n + c_n$,\ $n\geq 1$,
\item for every $1<p< 2$, the following inequality holds:
\[
\big\| (a_n) \big\|_{\cal{R}^d_p}
 + \big\| (b_n) \big\|_{\cal{R}_p^c} + \big\| (c_n)
\big\|_{\cal{R}_p^r}\leq \kappa_p' \big\| \sum_{n\geq 1} x_n \big\|_p.
\]
\end{enumerate}
\end{proposition}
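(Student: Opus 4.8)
The plan is to combine the simultaneous decomposition of Theorem~\ref{simultaneous} with the reduction map $\varTheta$ of \eqref{projection1}, in the spirit of the proof of Theorem~\ref{Rosless2} but keeping track of the $L_p$-norms for all $1<p<2$ at once. The two ingredients fit together because the triple produced by Theorem~\ref{simultaneous} is a \emph{fixed} triple of martingales valid simultaneously in every $\h_p^s(\M)$, $1<p<2$, while $\varTheta$ is a contraction $\h_p^s(\M)\to\cal{R}_p^s(\M)$ for each such $p$ by Lemma~\ref{lem:projection1}; post-composing the two produces the asserted decomposition.

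First I would form the martingale $x=\sum_{n\ge1}x_n$. Since $(x_n)$ is order independent with $\E(x_n)=0$, its partial sums form a martingale with respect to the filtration $(VN(\cal{A}_1,\dots,\cal{A}_n))_{n\ge1}$ with martingale difference sequence $dx_n=x_n$, and $x\in L_1(\M)\cap L_2(\M)$. Applying Theorem~\ref{simultaneous} to $x$ gives $x=a'+b'+c'$ with $a'\in\bigcap_{1<p<2}\h_p^d(\M)$, $b'\in\bigcap_{1<p<2}\h_p^c(\M)$, $c'\in\bigcap_{1<p<2}\h_p^r(\M)$ such that, for every $1<p<2$,
\[
\|a'\|_{\h_p^d}+\|b'\|_{\h_p^c}+\|c'\|_{\h_p^r}\le\kappa_p\|x\|_p .
\]
It is essential here that $a',b',c'$ (hence their martingale difference sequences) do not depend on $p$.

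Next I would push this decomposition through $\varTheta$. Set $a_n=\varTheta(a')_n$, $b_n=\varTheta(b')_n$, $c_n=\varTheta(c')_n$ for $n\ge2$, and $a_1=x_1$, $b_1=c_1=0$. Since $\varTheta$ vanishes in the first coordinate, $(b_n)_{n\ge1}=\varTheta(b')$ and $(c_n)_{n\ge1}=\varTheta(c')$, while $(a_n)_{n\ge1}=(x_1,0,0,\dots)+\varTheta(a')$. By Lemma~\ref{lem:projection1} and the $p$-independence of $a',b',c'$, the sequences $\varTheta(a'),\varTheta(b'),\varTheta(c')$ belong to $\bigcap_{1<p<2}\cal{R}_p^d(\M)$, $\bigcap_{1<p<2}\cal{R}_p^c(\M)$, $\bigcap_{1<p<2}\cal{R}_p^r(\M)$ respectively; and $(x_1,0,0,\dots)$ lies in every $\cal{R}_p^d(\M)$ since $x_1\in L_1(\cal{A}_1)\cap\cal{A}_1$ with $\E(x_1)=0$, so $(a_n)_{n\ge1}\in\bigcap_{1<p<2}\cal{R}_p^d(\M)$. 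Each of $(a_n),(b_n),(c_n)$ is supported on the independent family $(\cal{A}_n)$, hence is an independent sequence. For (i): by linearity of $\varTheta$ (i.e.\ of the conditional expectations $\E_{\cal{A}_n}$), $\varTheta(a')_n+\varTheta(b')_n+\varTheta(c')_n=\varTheta(x)_n$, and since $dx_n=x_n\in L_1(\cal{A}_n)\cap\cal{A}_n$ one has $\varTheta(x)_n=\E_{\cal{A}_n}(x_n)=x_n$ for $n\ge2$, while $x_1=a_1+b_1+c_1$ by construction; thus $x_n=a_n+b_n+c_n$ for all $n\ge1$. For (ii): using that $\E_1$ is contractive on $L_p(\M)$ and $x_1=\E_1(x)$ (which again uses order independence), $\|(x_1,0,0,\dots)\|_{\cal{R}_p^d}=\|x_1\|_p\le\|x\|_p$, and combining this with Lemma~\ref{lem:projection1} and the displayed estimate above one gets
\[
\|(a_n)\|_{\cal{R}_p^d}+\|(b_n)\|_{\cal{R}_p^c}+\|(c_n)\|_{\cal{R}_p^r}
\le\|x\|_p+\|a'\|_{\h_p^d}+\|b'\|_{\h_p^c}+\|c'\|_{\h_p^r}
\le(1+\kappa_p)\Big\|\sum_{n\ge1}x_n\Big\|_p ,
\]
so $\kappa_p'=1+\kappa_p$ may be taken.

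The point requiring the most care — what I would regard as the only genuine obstacle — is the bookkeeping at the index $n=1$: by construction $\varTheta(x)_1=0$, so the first term $x_1$ must be reinserted by hand into the diagonal part, and one must verify both that this costs only an additional $\|x\|_p$ in the estimate (via the identity $x_1=\E_1(x)$ and contractivity of $\E_1$) and that the one-term sequence $(x_1,0,0,\dots)$ genuinely lies in every $\cal{R}_p^d(\M)$. Apart from this, the argument is formal once Theorem~\ref{simultaneous} and Lemma~\ref{lem:projection1} are available, the only mild subtlety being to note that $\varTheta$ acts on the fixed martingales $a',b',c'$, so that the resulting independent sequences serve simultaneously for all $p\in(1,2)$.
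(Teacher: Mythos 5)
Your proposal is correct and is exactly the route the paper intends: the paper gives no detailed proof, merely remarking that the proposition follows by applying the reduction map $\varTheta$ (Lemma~\ref{lem:projection1}/Proposition~\ref{main-projection}) to the simultaneous decomposition of Theorem~\ref{simultaneous}, which is precisely what you carry out, including the same $n=1$ bookkeeping (reinserting $x_1$ into the diagonal part and estimating it via $x_1=\E_1(x)$) already used in the proof of Theorem~\ref{Rosless2}.
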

\section{Concluding remarks}

We begin with the following $\Phi$-moment analogue of the Burkholder-Gundy inequalities due to Dirksen and Ricard:
\begin{proposition}[{\cite[Corollary~3.3]{Dirksen-Ricard}}]\label{prop:BG}
If $1<p_\Phi \leq q_\Phi<\infty$,  then there exist a constant $C_\Phi$,  depending only on $\Phi$, such that  for any $x \in L_\Phi(\M)$,
\[
\T\big[ \Phi\big( |x|\big)\big] \leq C_\Phi \max\Big\{ \T\big[\Phi\big( \big(\sum_{n\geq 1} |dx_k|^2\big)^{1/2}\big)\big] , \T\big[\Phi\big( \big(\sum_{n\geq 1} |dx_k^*|^2\big)^{1/2}\big)\big] \Big\}.
\]
\end{proposition}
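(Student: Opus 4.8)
The plan is to deduce Proposition~\ref{prop:BG} by interpolation from the $p$-th moment noncommutative Burkholder--Gundy inequalities of Pisier--Xu \cite{PX}, in the spirit of \cite{Bekjan-Chen}. Since $\max\{A,B\}\le A+B$ and $\Phi$ has the $\Delta_2$-condition, it suffices to prove $\T[\Phi(|x|)]\le C_\Phi\big(\T[\Phi(s_{cg}(x))]+\T[\Phi(s_{rg}(x))]\big)$, where $s_{cg}(x)=(\sum_k|dx_k|^2)^{1/2}$ and $s_{rg}(x)=(\sum_k|dx_k^*|^2)^{1/2}$. When $q_\Phi<2$ this comes out directly: fixing $1<p_0<p_\Phi\le q_\Phi<p_1<2$, the $L_p$-inequality $\|x\|_p\lesssim_p\|s_{cg}(x)\|_p$ ($1<p<2$) is realized by a linear operator between noncommutative $L_p$-spaces --- embed the column Hardy space $\mathsf{H}_p^{cg}(\M)$ isometrically into $L_p(\M\overline{\otimes}B(\ell_2))$ by $x\mapsto\sum_k dx_k\otimes e_{k,1}$, note that $b\mapsto\sum_k((\E_k-\E_{k-1})b_{k,1})\otimes e_{k,1}$ is a bounded projection onto this copy by the noncommutative Stein inequality \cite{Ju}, and compose with the summation map --- so Theorem~\ref{Bekjan-Chen} upgrades it to $\T[\Phi(|x|)]\lesssim_\Phi\T[\Phi(s_{cg}(x))]$ (one evaluates the conclusion at $b=\sum_k dx_k\otimes e_{k,1}$, whose modulus is $s_{cg}(x)\otimes e_{1,1}$); the row estimate follows by passing to $x^*$.

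For general $\Phi$ --- in particular when $p_\Phi\ge2$ --- this single-operator scheme fails, because recovering a martingale from matrix data with control of \emph{both} square functions collides with the unboundedness of the transpose on $L_p(\M\overline{\otimes}B(\ell_2))$ for $p\ne2$. Following Dirksen--Ricard, I would route the argument through Rademacher averages. For any signs $(\e_k)$ the martingale transform $x\mapsto\sum_k\e_k dx_k$ is bounded on every $L_p(\M)$, $1<p<\infty$, so Theorem~\ref{Bekjan-Chen} gives $\T[\Phi(|\sum_k\e_k dx_k|)]\le C_\Phi\T[\Phi(|x|)]$; applying this with $\e_k=r_k(\omega)$ to the martingale $\sum_k r_k(\omega)dx_k$, whose transform by $(r_k(\omega))_k$ is $x$ again, and integrating over $\omega\in[0,1]$ yields
\[ \T\big[\Phi(|x|)\big]\le C_\Phi\int_0^1\T\Big[\Phi\big(\big|\textstyle\sum_k r_k(\omega)\,dx_k\big|\big)\Big]\,d\omega . \]
It then remains to bound the right-hand side by $C_\Phi\max\{\T[\Phi(s_{cg}(x))],\T[\Phi(s_{rg}(x))]\}$. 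This is a $\Phi$-moment form of one direction of the noncommutative Khintchine inequality of Lust-Piquard--Pisier \cite{LP4,LPI}: for each fixed $1<p<\infty$ one has $\|\sum_k r_k\otimes a_k\|_p\lesssim_p\max\{\|(\sum_k|a_k|^2)^{1/2}\|_p,\|(\sum_k|a_k^*|^2)^{1/2}\|_p\}$ (take $b_k=a_k$ in the $p<2$ decomposition), and $L_\Phi\in\mathrm{Int}(L_{p_0},L_{p_1})$ for $1<p_0<p_\Phi\le q_\Phi<p_1<\infty$.

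The heart of the matter --- and the main obstacle --- is this last step, which is again not accessible through the naive column and row embeddings. The way around it, as in \cite{Dirksen-Ricard}, rests on the fact that the intersection operator space $C_p\cap R_p$ (sequences normed by the maximum above) embeds \emph{complementedly} into a Schatten-type $L_p$-space, uniformly for $p$ in a compact subinterval of $(1,\infty)$; this follows by duality from Pisier and Xu's theorem that $C_p+R_p$ is complemented in $L_p(\M\overline{\otimes}B(\ell_2))$ \cite{PX}. Precomposing $(a_k)\mapsto\sum_k r_k\otimes a_k$ with the attendant projection gives a genuine linear operator between the relevant $L_p$-spaces, and Theorem~\ref{Bekjan-Chen} applied to it, specialized to $a_k=dx_k$, delivers the required bound, in particular in the straddling range $p_\Phi<2<q_\Phi$. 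One could alternatively bypass the complementation by a $K$- and $J$-functional reiteration in the style of the proof of Theorem~\ref{main}.
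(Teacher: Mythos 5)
First, a point of comparison: the paper contains no proof of Proposition~\ref{prop:BG} at all --- it is imported verbatim from \cite[Corollary~3.3]{Dirksen-Ricard} --- so your attempt has to be measured against that reference rather than against anything in this text. Within your proposal, the part that works is the range $q_\Phi<2$: composing the Stein-type projection with the summation map does give a single linear operator bounded $L_{p_i}(\M\overline{\otimes}B(\ell_2))\to L_{p_i}(\M)$ at two exponents $p_0<p_\Phi\leq q_\Phi<p_1<2$, and Theorem~\ref{Bekjan-Chen} evaluated at $\sum_k dx_k\otimes e_{k,1}$ yields $\T[\Phi(|x|)]\lesssim_\Phi\T[\Phi((\sum_k|dx_k|^2)^{1/2})]$; this is essentially the argument of \cite{Bekjan-Chen}. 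The reduction of the general case to a $\Phi$-moment Khintchine upper bound, via the sign-uniform boundedness of martingale transforms plus Theorem~\ref{Bekjan-Chen} and averaging over $\omega$, is also sound (and is indeed how \cite{Dirksen-Ricard} pass from their Khintchine inequality to Corollary~3.3).

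The gap is in the step you yourself flag as the heart of the matter: the bound $\int_0^1\T[\Phi(|\sum_k r_k(\omega)dx_k|)]\,d\omega\lesssim_\Phi\max\{\T[\Phi(s_{cg}(x))],\T[\Phi(s_{rg}(x))]\}$ when $q_\Phi\geq 2$, and in particular when $p_\Phi<2<q_\Phi$. Your justification rests on the claim that $C_p\cap R_p$ embeds complementedly into a noncommutative $L_p$-space, uniformly and \emph{compatibly} for $p$ in an interval containing $[p_\Phi,q_\Phi]$, deduced ``by duality from Pisier and Xu's theorem that $C_p+R_p$ is complemented in $L_p(\M\overline{\otimes}B(\ell_2))$.'' There is no such theorem in \cite{PX}: what is available there and in \cite{LPI} is that the Rademacher span is complemented in $L_p(L_\infty[0,1]\overline{\otimes}\M)$ for $1<p<\infty$, and that span is isomorphic to $C_p\cap R_p$ only for $p\geq2$, while for $p\leq2$ it carries the sum norm $C_p+R_p$. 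This identification flip at $p=2$ is exactly the obstruction: the one simultaneous projection that exists (the Rademacher projection) cannot be used here without circularity, since translating the $\Phi$-moment of $\sum_k r_k\otimes dx_k$ into the max of the two square-function moments \emph{is} the inequality being proved; and the embedding for which the $\Phi$-moment is trivially the max, namely $(a_k)\mapsto(\sum_ka_k\otimes e_{k,1})\oplus(\sum_ka_k\otimes e_{1,k})$, requires a single projection onto its range bounded at both $p_0<2$ and $p_1>2$, which is precisely the transpose-type problem you acknowledge and do not resolve. Even granting a fixed-$p$ complemented embedding of $C_p\cap R_p$ for each $p$ separately, Theorem~\ref{Bekjan-Chen} needs one operator bounded at both endpoints, so the compatibility across $p=2$ is the whole difficulty --- this is why \cite{Dirksen-Ricard} had to devise a genuinely moment-level argument for their Khintchine inequality, and why the present paper proves its intersection-type statement (Theorem~\ref{pl2}) only by duality through Proposition~\ref{duality}, the authors stating explicitly that no direct proof is known. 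Note also that for the straddling case $p_\Phi<2<q_\Phi$ the duality escape route is unavailable, since $\Phi^*$ then straddles $2$ as well; and your alternative suggestion of a $K$/$J$-reiteration ``in the style of Theorem~\ref{main}'' does not obviously help, because that machinery produces sum-type decompositions from Theorem~\ref{simultaneous} and has no counterpart delivering intersection-type upper bounds. So as written, the proof covers only the case $q_\Phi<2$ and otherwise reduces the proposition to the main theorem of the very paper being cited, without a valid proof of that ingredient.
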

A natural question  that arises  is whether a conditioned  version   of the above result holds. More precisely, we may ask the following problem: 
\begin{problem}
Does the second inequality in Theorem~\ref{pl2} hold under the weaker condition $1<p_\Phi \leq q_\Phi<\infty$?
\end{problem}
One may also consider the dual question: does the first inequality  in Theorem~\ref{main}  remain  valid if we only assume that $1<p_\Phi \leq q_\Phi <\infty$? 
These questions are still open even for the particular cases of independent sequences.
We should note here that the restriction $q_\Phi<2$ in Theorem~\ref{main} is due to our use of the simultaneous decompositions stated in  Theorem~\ref{simultaneous}.

\medskip

 In light of recent developments on theory of  noncommutative maximal functions, it would be desirable to have the exact noncommutative analogue of  \eqref{Burkholder} by replacing the diagonal term $\sum_{n\geq 1} \T\big[ \Phi\big(|dx_n|\big)\big]$  in  the statement of Theorem~\ref{pl2} by   an appropriate  \lq\lq $\Phi$-moment\rq\rq  maximal  function term.  Such  noncommutative maximal functions associated with Orlicz functions were already considered in \cite[Definition~3.2]{Bekjan-Chen-Ose} as follows:
\[
\T\big[ \Phi\big({\sup_n}^+ dx_n\big) \big] :=\inf\Big\{ \frac{1}{2}\Big( \T\big[ \Phi\big(|a|^2\big)\big] + \T\big[ \Phi\big(|b|^2\big)\big]\Big) \sup_n \|y_n\|_\infty \Big\}
\]
where the infimum is taken over all decompositions $dx_n=ay_n b$ for $a,b \in L_0(\M)$ and $(y_n) \subset \M$ with $|a|^2, |b|^2 \in L_\Phi(\M)$  and $\sup_n\|y_n\|_\infty \leq 1$.  The following problem is still open.
\begin{problem}
Assume that $2<p_\Phi \leq q_\Phi <\infty$ and $x \in L_\Phi(\M)$. Do we have
\[
\T\big[ \Phi\big(|x|\big)\big] \simeq_\Phi \max\Big\{ \T\big[ \Phi\big({\sup_n}^+ dx_n\big) \big], \T\big[ \Phi( s_c( x))\big],\T\big[ \Phi( s_r( x))\big] \Big\}?
\]
\end{problem}
As shown in \cite{JX3}, the answer  to the above problem is positive for  the case of  $p$-th moments when  $p \geq 2$. By duality, the corresponding result  involving $\ell_1$-valued  noncommutative $L_p$-spaces is also known for the case  $1<p<2$.
A first step toward this direction would be to improve the simultaneous decomposition stated in Theorem~\ref{simultaneous}  by replacing the diagonal term  $\|a\|_{\h_p^d}$ by  $\|(da_n) \|_{L_p(\M;\ell_1)}$. We refer to \cite{JX3} for the formal definition of the space $L_p(\M;\ell_1)$.

\medskip

We conclude by   noting that   the noncommutative Burkholder inequalities are valid for martingales in $L_p$-spaces associated with type III von Neumann algebras (\cite{JX}). In  \cite{Labu},   a theory of Orlicz spaces has been  developed  for  type III von Neumann algebras in the  spirit of  the construction  of  the Haagerup $L_p$-spaces. An interesting future direction  would be to develop a $\Phi$-moment theory for  the type III-case  using \cite{Labu}.


\begin{thebibliography}{10}

\bibitem{Bekjan-Chen}
T.~Bekjan and Z.~Chen, \emph{Interpolation and {$\Phi$}-moment inequalities of
  noncommutative martingales}, Probab. Theory Related Fields \textbf{152}
  (2012), no.~1-2, 179--206. 

\bibitem{Bekjan-Chen-Ose}
T.~Bekjan, Z.~Chen, and A.~Os\c ekowski, \emph{Noncommutative maximal
  inequalities associated with convex functions}, Tans. Amer. Math. Soc. (to
  appear), arXiv:1108.2795v6.

\bibitem{Bennett1}
C.~Bennett, \emph{Banach function spaces and interpolation methods. {I}. {T}he
  abstract theory}, J. Funct. Anal. \textbf{17} (1974), 409--440.  

\bibitem{BENSHA}
C.~Bennett and R.~Sharpley, \emph{Interpolation of operators}, Academic Press
  Inc., Boston, MA, 1988. 

\bibitem{BL}
J.~Bergh and J.~L{\"o}fstr{\"o}m, \emph{Interpolation spaces. {A}n
  introduction}, Springer-Verlag, Berlin, 1976, Grundlehren der Mathematischen
  Wissenschaften, No. 223. 

\bibitem{Bu1}
D.~L. Burkholder, \emph{Distribution function inequalities for martingales},
  Ann. Probab. \textbf{1} (1973), 19--42. 
  
\bibitem{Burkholder-Davis-Gundy}
D.~L. Burkholder, B.~J. Davis, and R.~F. Gundy, \emph{Integral inequalities for
  convex functions of operators on martingales}, Proceedings of the {S}ixth
  {B}erkeley {S}ymposium on {M}athematical {S}tatistics and {P}robability
  ({U}niv. {C}alifornia, {B}erkeley, {C}alif., 1970/1971), {V}ol. {II}:
  {P}robability theory, Univ. California Press, Berkeley, Calif., 1972,
  pp.~223--240. 

\bibitem{BG}
D.~L. Burkholder and R.~F. Gundy, \emph{Extrapolation and interpolation of
  quasi-linear operators on martingales}, Acta Math. \textbf{124} (1970),
  249--304.

\bibitem{Butzer-Berens}
{P. L.} Butzer and H.~Berens, \emph{Semi-groups of operators and
  approximation}, Die Grundlehren der mathematischen Wissenschaften, Band 145,
  Springer-Verlag New York Inc., New York, 1967. 

\bibitem{D1}
J.~Diestel, \emph{Sequences and series in {B}anach spaces}, Graduate Text in
  Mathematics, {\bf 92}, Springer-Verlag, New York, 1984. 

\bibitem{Dirksen}
S.~Dirksen, \emph{Weak-type interpolation for noncommutative maximal
  operators}, ArXiv:1212.5168v2.

\bibitem{Dirk-Pag-Pot-Suk}
S.~Dirksen, B.~de~Pagter, D.~Potapov, and F.~Sukochev, \emph{Rosenthal
  inequalities in noncommutative symmetric spaces}, J. Funct. Anal.
  \textbf{261} (2011), no.~10, 2890--2925.  

\bibitem{Dirksen-Ricard}
S.~Dirksen and E.~Ricard, \emph{Some remarks on noncommutative {K}hintchine
  inequalities}, Bull. Lond. Math. Soc. \textbf{45} (2013), no.~3, 618--624.

\bibitem{DDP1}
P.~G. Dodds, T.~K. Dodds, and B.~de~Pagter, \emph{Noncommutative {B}anach
  function spaces}, Math. Z. \textbf{201} (1989), 583--597.  

\bibitem{DDP4}
P.~G. Dodds, T.~K. Dodds, and B.~de~Pagter, \emph{Fully symmetric operator spaces}, Integral Equations Operator
  Theory \textbf{15} (1992), no.~6, 942--972.  

\bibitem{FK}
T.~Fack and H.~Kosaki, \emph{Generalized $s$-numbers of $\tau$-measurable
  operators}, Pacific J. Math. \textbf{123} (1986), 269--300.  

\bibitem{Garsia2}
A.~M. Garsia, \emph{On a convex function inequality for martingales}, Ann.
  Probability \textbf{1} (1973), no.~1, 171--174. 

\bibitem{Ju}
M.~Junge, \emph{Doob's inequality for non-commutative martingales}, J. Reine
  Angew. Math. \textbf{549} (2002), 149--190. 

\bibitem{Junge-Perrin}
M.~Junge and M.~Perrin, \emph{Theory of {$\mathcal H\sb p$}-spaces for
  continuous filtrations in von {N}eumann algebras}, Ast\'erisque (2014),
  no.~362, vi+134. 

\bibitem{JX}
M.~Junge and Q.~Xu, \emph{Noncommutative {B}urkholder/{R}osenthal
  inequalities}, Ann. Probab. \textbf{31} (2003), no.~2, 948--995.

\bibitem{JX2}
M.~Junge and Q.~Xu, \emph{On the best constants in some non-commutative martingale
  inequalities}, Bull. London Math. Soc. \textbf{37} (2005), no.~2, 243--253.
 
\bibitem{JX3}
M.~Junge and Q.~Xu, \emph{Noncommutative {B}urkholder/{R}osenthal inequalities. {II}.
  {A}pplications}, Israel J. Math. \textbf{167} (2008), 227--282.  

\bibitem{KaltonSMS}
N.~Kalton and S.~Montgomery-Smith, \emph{Interpolation of {B}anach spaces},
  Handbook of the geometry of Banach spaces, Vol.\ 2, North-Holland, Amsterdam,
  2003, pp.~1131--1175. 

\bibitem{Kalton-Sukochev}
N.~J. Kalton and F.~A. Sukochev, \emph{Symmetric norms and spaces of
  operators}, J. Reine Angew. Math. \textbf{621} (2008), 81--121. 

\bibitem{Kras-Rutickii}
M.~A. Krasnosel{\cprime}ski{\u\i} and Ja.~B. Ruticki{\u\i}, \emph{Convex
  functions and {O}rlicz spaces}, Translated from the first Russian edition by
  Leo F. Boron, P. Noordhoff Ltd., Groningen, 1961.  

\bibitem{Labu}
L.~E. Labuschagne, \emph{A crossed product approach to {O}rlicz spaces}, Proc.
  Lond. Math. Soc. (3) \textbf{107} (2013), no.~5, 965--1003.  

\bibitem{LT}
J.~Lindenstrauss and L.~Tzafriri, \emph{Classical {B}anach spaces. {I}{I}},
  Springer-Verlag, Berlin, 1979, Function spaces. 
  
\bibitem{LP4}
F.~Lust-Piquard, \emph{In\'egalit\'es de {K}hintchine dans ${C}\sb
  p\;(1<p<\infty)$}, C. R. Acad. Sci. Paris S\'er. I Math. \textbf{303} (1986),
  289--292. 

\bibitem{LPI}
F.~Lust-Piquard and G.~Pisier, \emph{Noncommutative {K}hintchine and {P}aley
  inequalities}, Ark. Mat. \textbf{29} (1991), 241--260. 

\bibitem{Maligranda}
L.~Maligranda, \emph{Indices and interpolation}, Dissertationes Math. (Rozprawy
  Mat.) \textbf{234} (1985), 49.  

\bibitem{Maligranda2}
L.~Maligranda, \emph{Orlicz spaces and interpolation}, Semin\'arios de Matem\'atica
  [Seminars in Mathematics], vol.~5, Universidade Estadual de Campinas,
  Departamento de Matem\'atica, Campinas, 1989. 

\bibitem{Mogy}
J.~Mogyor{\'o}di, \emph{On an inequality of {H}. {P}. {R}osenthal}, Period.
  Math. Hungar. \textbf{8} (1977), no.~3-4, 275--279.  

\bibitem{N}
E.~Nelson, \emph{Notes on non-commutative integration}, J. Funct. Anal.
  \textbf{15} (1974), 103--116. 

\bibitem{PX}
G.~Pisier and Q.~Xu, \emph{Non-commutative martingale inequalities}, Comm.
  Math. Phys. \textbf{189} (1997), 667--698. 

\bibitem{PX3}
G.~Pisier and Q.~Xu, \emph{Non-commutative {$L\sp p$}-spaces}, Handbook of the geometry of
  Banach spaces, Vol.\ 2, North-Holland, Amsterdam, 2003, pp.~1459--1517.

\bibitem{Ran15}
N.~Randrianantoanina, \emph{Non-commutative martingale transforms}, J. Funct.
  Anal. \textbf{194} (2002), 181--212.  
  
\bibitem{Ran18}
N.~Randrianantoanina, \emph{A weak type inequality for non-commutative martingales and
  applications}, Proc. London Math. Soc. (3) \textbf{91} (2005), no.~2,
  509--542. 

\bibitem{Ran21}
N.~Randrianantoanina, \emph{Conditioned square functions for noncommutative martingales},
  Ann. Probab. \textbf{35} (2007), no.~3, 1039--1070. 

\bibitem{RW}
N.~Randrianantoanina and L.~Wu, \emph{Martingale inequalities in noncommutative
  symmetric spaces}, J. Funct. Anal. (to appear), ArXiv:1501.05944v2.

\bibitem{Ros}
H.~P. Rosenthal, \emph{On the subspaces of {$L\sp{p}$} {$(p\geq 2)$} spanned by
  sequences of independent random variables}, Israel J. Math. \textbf{8}
  (1970), 273--303. 

\bibitem{TAK2}
M.~Takesaki, \emph{Theory of operator algebras. {II}}, Encyclopaedia of
  Mathematical Sciences, vol. 125, Springer-Verlag, Berlin, 2003, Operator
  Algebras and Non-commutative Geometry, 6. 

\bibitem{X}
Q.~Xu, \emph{Analytic functions with values in lattices and symmetric spaces of
  measurable operators}, Math. Proc. Cambridge Philos. Soc. \textbf{109}
  (1991), 541--563.  

\end{thebibliography}
\def\cprime{$'$}
\providecommand{\href}[2]{#2}

\end{document}